 \def\l@subsection{\@tocline{2}{0pt}{4pc}{6pc}{}}
\def\l@subsubsection{\@tocline{3}{0pt}{8pc}{8pc}{}}
\newcommand\norm[1]{\left\lVert#1\right\rVert}
\newcommand\ip[2]{\langle#1,#2\rangle}
\newcommand\ncm[1]{(\mathscr{A}_{#1}^{\dagger})_+}
\def\a{\alpha}
\def\p{\varphi}
\def\l{\lambda}
\def\g{\gamma}
\def\k{\kappa}
\def\s{\sigma}
\def\t{\tau}
\def\d{\delta}
\def\z{\zeta}
\def\th{\theta}
\def\D{\Delta}
\def\L{\Lambda}
\def\P{\Phi}
\def\ker{\mathrm{Ker}}
\def\rmd{\mathrm{d}}
\def\NN{\mathbb{N}}
\def\ZZ{\mathbb{Z}}
\def\RR{\mathbb{R}}
\def\CC{\mathbb{C}}
\def\DD{\mathbb{D}}
\def\BB{\mathbb{B}}
\def\EE{\mathbb{E}}
\def\TT{\mathbb{T}}
\def\FF{\mathbb{F}}
\def\HH{\mathbb{H}}
\def\nc{\mathrm{nc}}
\def\ac{\mathrm{ac}}
\def\rms{\mathrm{s}}
\def\cA{\mathcal{A}}
\def\cV{\mathcal{V}}
\def\cW{\mathcal{W}}
\def\cD{\mathcal{D}}
\def\cH{\mathcal{H}}
\def\cZ{\mathcal{Z}}
\def\scrA{\mathscr{A}}
\def\dju{\bigsqcup_{n=1}^{\infty}}
\newtheorem{theorem}{Theorem}[section]
\newtheorem{lemma}[theorem]{Lemma}
\newtheorem{corollary}[theorem]{Corollary}
\theoremstyle{definition}
\newtheorem{definition}[theorem]{Definition}
\newtheorem{remark}[theorem]{Remark}
\numberwithin{equation}{section}
\renewcommand{\tilde}{\widetilde}
\renewcommand{\hat}{\widehat}
\title{A Noncommutative Szeg\H{o}-Type Theorem on the Row-Ball}
\dedicatory{Dedicated to the memory of Professor Harry Dym \textcjheb{l}"\textcjheb{z}}
\date{\today}
\author[C. J. Gauntlett]{Connor J. Gauntlett}
\address{(CJG) School of Mathematics and Statistics\\
Newcastle University\\
Newcastle upon Tyne NE1 7RU UK}
\email{c.gauntlett@newcastle.ac.uk}
\author[D. P. Kimsey]{David P. Kimsey}
\address{(DPK) School of Mathematics and Statistics\\
Newcastle University\\
Newcastle upon Tyne NE1 7RU UK}
\email{david.kimsey@newcastle.ac.uk}
\subjclass[2020]{46L51, 46L52, 42C05}
\keywords{Noncommutative measures, Noncommutative function theory, Orthogonal polynomials, Verblunsky coefficients, Christoffel function, Szeg\H{o} limit theorem}
\begin{document}

\begin{abstract}
    In this paper we leverage the recently developed theory of noncommutative (nc) measures to prove a free noncommutative analogue of many known equalities extending the weak Szeg\H{o} limit theorem, by applying Constantinescu's theory of Schur parameters to an appropriate kernel on the free monoid on \(d\) generators, where \(d \geq 1\); in particular, we show that our nc Szeg\H{o} entropy depends only upon the absolutely continuous part of the associated nc measure. We obtain a correspondence between nc measures and multi-Toeplitz kernels arising from considering the moments of the nc measure, and apply this correspondence to study orthogonal polynomials associated to an nc measure. Finally, we study the determinantal zeros of those polynomials and obtain a noncommutative row-ball analogue of the so-called Zeros Theorem for orthogonal polynomials on the unit circle.
\end{abstract}

\maketitle

\tableofcontents

\section{Introduction}

The primary purpose of this paper is to prove a noncommutative analogue of Szeg\H{o}'s theorem, in different forms variously referred to as the Szeg\H{o}--Verblunsky theorem or the first, or weak, Szeg\H{o} limit theorem. In so doing, we introduce and study a number of tools reminiscent of the classical Szeg\H{o} theory including Verblunsky coefficients (or Schur parameters), orthogonal polynomials, and the Christoffel function associated to a measure.

The first form of Szeg\H{o}'s theorem we consider herein was first given by Verblunsky and hence is sometimes referred to as the Szeg\H{o}--Verblunsky theorem. This result links the absolutely continuous part of a (non-finitely atomic) measure \(\mu\) on the unit circle to the Verblunsky coefficients \((\g_n)_{n=0}^{\infty}\) of \(\mu\): let the Lebesgue decomposition of \(\mu\) with respect to the normalised Lebesgue measure \(\frac{\rmd\th}{2\pi}\) be \(\rmd \mu(\th) = w(\th) \frac{\rmd \th}{2\pi} + \rmd \mu_{\rms}(\th)\), and let \((\g_n)_{n=0}^{\infty} \subseteq \DD\) be the Verblunsky coefficients of \(\mu\). Then
\begin{equation}
    \label{eqn:SzegoVerblunsky}
    \prod_{j = 0}^{\infty} (1 - \lvert \g_j \rvert^2) = \exp \left( \int_{0}^{2\pi} \log\big(w(\th)\big) \; \frac{\rmd \th}{2\pi} \right),
\end{equation}
and hence in particular
\begin{equation}
    \label{eqn:SzegoCondition}
    \sum_{j=0}^{\infty} \lvert \g_j \rvert^2 < \infty \quad \text{ if and only if } \quad \int_{0}^{2\pi} \log\big(w(\th)\big) \; \frac{\rmd\th}{2\pi} > - \infty;
\end{equation}
we stress that while the Verblunsky coefficients of \(\mu\) depend on all of \(\mu\), the condition on the right-hand side of \eqref{eqn:SzegoCondition} --- the \emph{Szeg\H{o} condition} --- explicitly depends only upon the absolutely continuous part of \(\mu\).

In another form more often called the first, or weak, Szeg\H{o} limit theorem, one considers the Fourier coefficients \((c_n)_{n\in\ZZ}\) of \(\mu\), recalling that positivity of \(\mu\) implies that \(\overline{c_n} = c_{-n}\), and defines the Toeplitz determinants of \(\mu\) via
\begin{equation}
    \label{eqn:WeakSzegoLimit}
    D_n := \det \begin{bmatrix}c_0 & c_1 & \cdots & c_{n} \\ c_{-1} & c_0 & \cdots & c_{n-1} \\ \vdots & \vdots & \ddots & \vdots \\ c_{-n} & c_{-(n-1)} & \cdots & c_0\end{bmatrix}.
\end{equation}
One formulation of the weak Szeg\H{o} limit theorem then says that
\[
    \lim_{n\to\infty}\frac{D_n}{D_{n-1}} = \prod_{j = 0}^{\infty} (1 - \lvert \g_j \rvert^2) \left( = \exp \left( \int_{0}^{2\pi} \log\big(w(\th)\big) \; \frac{\rmd \th}{2\pi} \right) \right),
\]
providing another equivalent form for the quantity \(\prod_{j = 0}^{\infty} (1 - \lvert \g_j \rvert^2)\).

These results, their equivalent formulations and their consequences have far-reaching links across mathematical analysis; for a highly detailed discussion of much of this area of study and its history, see the book \cite{Sim05}. We highlight in particular the following summary theorem, \cite[Theorem 2.7.14]{Sim05}.

Let \(\mu\) be a non-finitely atomic measure on the circle \(\TT\) with Lebesgue decomposition with respect to the normalised Lebesgue measure \(\rmd\mu(\th) = w(\th)\frac{\rmd \th}{2\pi} + \rmd \mu_s \). Let \(\mu\) have associated Verblunsky coefficients \((\g_n)_{n=0}^{\infty}\), monic orthogonal polynomials \((\Phi_n)_{n=0}^{\infty}\), orthonormal polynomials \((\p_n)_{n=0}^{\infty}\), Toeplitz determinants \((D_n)_{n=0}^{\infty}\), Szeg\H{o} function \(D(z)\), Christoffel function \(\l_\infty(z; \rmd\mu)\), and Schur function \(f(z)\); let \(\k_n\) be the coefficient of \(z^n\) in \(\p_n\). For a polynomial \(p : \TT \to \CC\), denote its reverse polynomial by \(p^*(z) := z^{\rm{deg}(p)}\overline{p(1/\overline{z})}\). 

\begin{theorem}[Theorem 2.7.14 in \cite{Sim05}]
    \label{thm:CommSzego}
    The following quantities are all equal.
    \begin{enumerate}
        \item[\rm{(i)}] \(\exp(\int_{0}^{2\pi}\log(w(\th)) \; \frac{\rmd \th}{2\pi})\);
        \item[\rm{(ii)}] \(\lim_{n\to \infty} \norm{\P_n}^2 = \lim_{n\to\infty} \norm{\P_n^*}^2\);
        \item[\rm{(iii)}] \(\lim_{n\to\infty} \k_n^{-2}\);
        \item[\rm{(iv)}] \(\prod_{n=0}^\infty (1 - \lvert \g_n \rvert^2)\);
        \item[\rm{(v)}] \(\exp(\int_0^{2\pi} \log\left(1 - \lvert f(e^{i \th})\rvert^2\right) \frac{\rmd \th}{2\pi})\);
        \item[\rm{(vi)}] \(\lim_{n\to\infty} \frac{D_{n+1}}{D_n} = \lim_{n\to\infty} \sqrt[n]{D_n}\);
        \item[\rm{(vii)}] \(\l_\infty(0; \rmd\mu)\);
        \item[\rm{(viii)}] \(1 - \norm{Q_+ z^{-1}}^2, \text{ where \(Q_+\) is the projection onto \(\mathrm{span}\{z^j\}_{j=0}^\infty\)}\);
        \item[\rm{(ix)}] \(\lvert D(0) \rvert^2\);
        \item[\rm{(x)}] \(\lim_{n\to\infty} \lvert \p_n^*(0) \rvert^{-2}\);
        \item[\rm{(xi)}] \(\lim_{n\to\infty} \left(\sum_{j=0}^n \lvert \p_j(0) \rvert^2\right)^{-1}\);
        \item[\rm{(xii)}] The relative entropy \(S(\frac{\rmd \th}{2\pi} \; \vert \; \rmd \mu)\).
    \end{enumerate}
\end{theorem}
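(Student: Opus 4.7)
\emph{Strategy.} My plan is to treat (iv) as an algebraic hub and (i) as an analytic hub, use the Szeg\H{o}--Verblunsky identity \eqref{eqn:SzegoVerblunsky} (already recorded in the introduction) to bridge the two, and then recover each remaining item via a short classical identity from the theory of orthogonal polynomials on the unit circle.

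\emph{Algebraic cluster (ii), (iii), (iv), (vi), (x).} Starting from the Szeg\H{o} recursion \(\P_{n+1}(z) = z\P_n(z) - \bar{\g}_n\P_n^*(z)\), I would derive \(\norm{\P_n}^2 = \prod_{j=0}^{n-1}(1-\lvert \g_j \rvert^2)\), which gives (ii) = (iv) on letting \(n\to\infty\); the equality \(\norm{\P_n^*} = \norm{\P_n}\) is immediate from the involutive nature of the reverse operation. The orthonormalisation \(\p_n = \P_n/\norm{\P_n}\) forces \(\k_n = \norm{\P_n}^{-1}\), so \(\k_n^{-2} = \norm{\P_n}^2\), giving (iii) = (ii). Since the constant term of a reverse polynomial is the leading coefficient of the original, \(\p_n^*(0) = \k_n\), hence (x) = (iii). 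For the Toeplitz part of (vi), block-expansion of the Gram matrix of \(\{1, z, \ldots, z^n\}\) in \(L^2(\rmd\mu)\) gives \(D_n / D_{n-1} = \norm{\P_n}^2\); the \(n\)-th-root limit then follows by the standard ratio-to-root principle applied to \(D_n = \prod_{k=0}^{n}\norm{\P_k}^2\).

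\emph{Analytic cluster (i), (v), (vii), (viii), (ix), (xi), (xii).} Equation \eqref{eqn:SzegoVerblunsky} itself delivers (i) = (iv). From the definition \(D(z) = \exp(\frac{1}{4\pi}\int_0^{2\pi}\frac{e^{i\th}+z}{e^{i\th}-z}\log w(\th) \, \rmd\th)\), evaluating at \(z=0\) and squaring the modulus yields (ix) = (i). Using the Schur-algorithm correspondence between \((\g_n)\) and \(f\) together with the boundary identity \(1 - \lvert f(e^{i\th})\rvert^2 = w(\th)/\lvert D(e^{i\th})\rvert^2\) a.e., one integrates the logarithm to obtain (v) = (i). Identifying \(Q_+ z^{-1}\) as the one-step prediction residual in \(L^2(\rmd\mu)\) and invoking the Kolmogorov--Szeg\H{o} prediction theorem gives (viii) = (i). On the Christoffel side, the Christoffel--Darboux formula gives \(\l_n(0;\rmd\mu) = (\sum_{j=0}^n \lvert \p_j(0)\rvert^2)^{-1}\), so passing to \(n = \infty\) delivers (xi) = (vii), and the OPUC identity \(\l_\infty(0;\rmd\mu) = \prod_j(1-\lvert \g_j\rvert^2)\) connects both to (iv). Finally, (xii) = (i) follows by unpacking Simon's definition of the relative entropy \(S(\rmd\th/(2\pi) \mid \rmd\mu)\), which by construction reduces on the Szeg\H{o} class to the integral in (i).

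\emph{Main obstacle.} The deepest inputs are the Szeg\H{o}--Verblunsky identity (which I take as given from the introduction) and the Kolmogorov--Szeg\H{o} prediction theorem underlying (viii). The Schur-function identity (v) also requires care, since it rests on the outer-function representation of \(D\) and on the boundary relation \(1 - \lvert f\rvert^2 = w/\lvert D\rvert^2\), both of which must be justified within the Szeg\H{o} class where we operate by virtue of \(\mu\) being non-finitely-atomic. Everything else amounts to bookkeeping with the Szeg\H{o} recursion, the reverse-polynomial involution, and the Christoffel--Darboux kernel.
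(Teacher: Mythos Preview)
The paper does not prove this theorem at all: it is quoted verbatim as \cite[Theorem 2.7.14]{Sim05} and used purely as background and motivation for the noncommutative analogue (\Cref{thm:NCSzego}). There is therefore no ``paper's own proof'' to compare your proposal against. Your outline is a reasonable sketch of the standard OPUC derivation, and the hub-and-spoke organisation around (iv) and (i) mirrors how Simon himself arranges the argument, but for the purposes of this paper no proof is required or supplied.
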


The Szeg\H{o}--Verblunsky theorem is the equality of (i) and (iv), while the weak Szeg\H{o} limit theorem equates (iv) and (vi), but this theorem highlights a plethora of the related quantities available as tools in the classical theory; in the sequel, we shall introduce noncommutative analogues of many of these objects and relate them in free noncommutative analogue of \Cref{thm:CommSzego}.

Free noncommutative (nc) functions are graded functions of (tuples of) square matrices of arbitrary size that respect intertwinings, originating in the work of J. L. Taylor \cite{Tay72}, \cite{Tay73} which was intended to develop a functional calculus for tuples of noncommuting operators. Building upon these works and the earlier vector-valued function theory of A. E. Taylor \cite{Tay37} and subsequently Hille and Phillips \cite[Chapter XXVI]{HP57}, nc function theory has seen a surge of activity in the past two decades, arising from several motivations: an incomplete list of examples includes holomorphic function theory \cite{Pop06b}, algebraic geometry \cite{Hel02}, control theory \cite{HMPV09}, free probability \cite{Voi10}, reproducing kernel Hilbert spaces \cite{BMV16} and functional calculus \cite{AMY20}. A review of the foundations of the field with a focus on generality and a noncommutative Taylor series can be found in \cite{KVV14}; likewise from a different perspective, the second part of \cite{AMY20} is dedicated to analogues of classical theorems of complex analysis in the free noncommutative setting. Moreover, \cite{KVV14} has an historical review between Chapter 9 and Appendix A, \cite[Chapter 11]{AMY20} provides several additional motivations for studying nc functions, and both of these discussions contain rich lists of references for further reading.

One recent development in the field is a theory of \emph{nc measures} which in many ways mimics the theory of measures on the unit circle in the complex plane, developed largely in a series of papers including \cite{JM17}, \cite{JM20}, \cite{JM22a}, \cite{JM22b}, and \cite{JM23}. Of particular note for us is the introduction of a Lebesgue decomposition for nc measures and the accompanying notions of absolutely continuous/singular nc measure, discussed in detail in \cite{JM22b}. As we work within this framework in large part, we shall begin the paper by recalling some of the relevant ideas.

Given the recent introduction of absolutely continuous nc measures, it becomes natural to ask whether an analogue of Szeg\H{o}'s theorem in some form holds with the noncommutative measures of Jury and Martin in place of a measure on the unit circle. Some work in this direction exists already: a phrasing of Szeg\H{o}'s theorem in terms of factorisations of symbols of Toeplitz operators admits a full generalisation to the setting of multi-Toeplitz operators on the full Fock space \cite[Theorem 1.3]{Pop06}; Jury and Martin's Lebesgue decomposition \cite{JM22b} shows that the nc measure-theoretic interpretation of such operators is as Radon--Nikodym derivatives, hence inducing absolutely continuous nc measures. A connection between the \emph{prediction entropy} of a multi-Toeplitz operator studied in \cite{Pop06} and our work is discussed in \Cref{rem:Popescu06}.

Meanwhile, in \cite{Pop01}, operator-valued multi-Toeplitz kernels are studied, and an accompanying notion of Verblunsky coefficients therein called generalised Schur sequences is introduced. In this setting, Popescu is able to obtain a weak Szeg\H{o} limit theorem relating the multi-Toeplitz determinants to these generalised Schur sequences; in \cite{Pop06} it is noted that the quantity studied in this weak Szeg\H{o} limit theorem is distinct from the prediction entropy of a multi-Toeplitz operator.

The Verblunsky coefficients (or sometimes \emph{Schur parameters}) \((\g_n)_{n=0}^{\infty} \subseteq \overline{\DD}\) associated to a measure \(\mu\) on \(\TT\) are perhaps the most ubiquitous tool in Szeg\H{o} theory. One way to generate this association is via the \emph{Schur algorithm} (or \emph{coefficient stripping}), a recursive algorithm utilising the power series representation of the Schur function associated to \(\mu\). Extensions of this algorithm to several variables have been studied: \cite{ABK02} develops a Schur algorithm for Schur functions on the unit ball in \(\CC^d\) where the Verblunsky coefficients are now elements of the unit ball, while in several noncommutative variables the aforementioned work \cite{Pop01} discusses generalised Schur sequences associated to a multi-Toeplitz kernel, realised as sequences of tuples of matrices; notably, in both of these cases, the multivariate analogue of the Verblunsky coefficients are vector- or operator-valued, rather than complex-valued and inside the unit disc.

In order to produce a complex-valued (in particular, \(\overline{\DD}\)-valued) analogue of Verblunsky coefficients for nc measures, then, a different method is required, and we discover our solution in \cite{CJ02b}, which in turn relies on the abstract kernel theory of \cite{Con96}. In \cite{CJ02b}, it is shown that kernels on \(d\) noncommuting variables (that is, maps \(\FF_d^+ \times \FF_d^+ \to \CC\)) satisfying certain technical conditions correspond to families of complex numbers in the unit disk, indexed now by the free monoid on \(d\) generators \(\FF_d^+\) rather than \(\NN\). 

Classically, of course, positive measures on the unit circle correspond bijectively with positive kernels on the natural numbers by defining \(K(n,m) := \int_0^{2\pi} e^{i(n-m)\th} \; \rmd \mu(\th)\): this is part of the Carath\'{e}odory--Toeplitz theorem (see, e.g., Theorem 11.3, \(\mathrm{(i)} \iff \mathrm{(v)}\) of \cite{Sch17}), so a noncommutative version of this correspondence would allow us to produce Verblunsky coefficients for an nc measure. We shall see that such a correspondence does hold, with the caveat that we need to slightly modify one of the axioms of Constantinescu and Johnson's definition --- however, this modification does not affect the theory thereafter, so we retain their results for the setting with which we proceed to work. 

We shall also require study of the \emph{Christoffel function} of a measure. If \(\mu\) is a non-finitely atomic measure on \(\TT\), \cite{Sim05} defines what we shall call the Christoffel approximates of \(\mu\) to be
\begin{equation}
    \label{eqn:CDApproximateDef}
    \l_{n}(\z, \rmd \mu) := \min\left\{\int_0^{2\pi} \lvert \pi(e^{i\th}) \rvert^2 \, \rmd \mu(\th) : \pi \in \CC[z], \deg(\pi) \leq n, \pi(\z) = 1 \right\}
\end{equation}
and the Christoffel function of \(\mu\) is then
\[
    \l_\infty (\z, \rmd\mu) := \lim_n \l_n(\z, \rmd\mu) = \inf\left\{\int_0^{2\pi} \lvert \pi(e^{i\th}) \rvert^2 \, \rmd \mu(\th) : \pi \in \CC[z], \pi(\z) = 1 \right\}.
\]
This function is linked to the Christoffel-Darboux kernel of the measure,
\[
    K_n(z,\z) = \sum_{j=0}^{n} \overline{\p_j(\z)}\p_j(z) \quad \text{ and } \quad K(z,\z) = \sum_{j=0}^{\infty} \overline{\p_j(\z)}\p_j(z)
\]
where \((\p_j)_{j=0}^{\infty}\) are the orthonormal polynomials associated to \(\mu\), by the formula
\begin{equation}
    \label{eqn:CommChristoffelFunction}
    \l_n(\z, \rmd\mu) = \frac{1}{K_n(\z,\z)}
\end{equation}
and it follows that
\[
    \l_\infty(\z, \rmd\mu) = \frac{1}{K(\z,\z)}.
\]
A noncommutative Christoffel function is discussed in \cite{BMV23} in the context of bounded tracial states on the algebra of noncommutative polynomials equipped with a particular involution with respect to which the monomials \(\{X_1, \ldots, X_d\}\) are self-adjoint. In \cite{BMV23}, the appropriate analogue of \eqref{eqn:CommChristoffelFunction} is taken as a definition and it is shown that a similar minimum form to \eqref{eqn:CDApproximateDef} holds, despite this quantity now being matrix-valued. This setting differs from ours, but motivated by this paper, we introduce a similar construction and obtain a similar result for what we name \emph{nc Christoffel approximates}. By taking a pointwise limit over the bounded polynomial degree in this minimum form, we may obtain an nc Christoffel function, and we show that this limit exists. Of course, orthogonal polynomials are themselves classically objects of much interest; as \cite{BMV23} remarks, noncommutative orthogonal polynomials have been a recent object of study, and we direct the interested reader there for further reading in this direction.

Once this is done, we are able to prove a noncommutative version of \Cref{thm:CommSzego} linking all of these ideas; a particularly notable consequence of this result is that a product analogous to the left-hand side of \eqref{eqn:SzegoVerblunsky} depends only on the absolutely continuous part of the nc measure. As in the classical theory, this statement can be rephrased in terms of the square-summability of the Verblunsky coefficients of an nc measure.

Finally, we pivot to a related problem, inspired by the \emph{Zeros Theorem} (see e.g. \cite[Theorem 1.7.1]{Sim05}) on the location of the zeros of the orthonormal polynomials \((\p_n)_{n=0}^{\infty}\) and their reverse polynomials \((\p_n^*)_{n=0}^{\infty}\). We obtain an analogous result for the noncommutative orthonormal polynomials associated to an nc measure, with the caveat that we restrict our discussion to more particular subsets of the nc space \(\CC^d_\nc\).

\subsection{Main Conclusions}

\begin{enumerate}
    \item[(C1)] We produce a correspondence between nc measures and kernels on \(\FF_d^+\) exhibiting multi-Toeplitz structure (see \Cref{thm:Correspondence}).
    \item[(C2)] We establish a minimisation formula for the nc Christoffel approximates of an nc measure (\Cref{lem:ChristoffelFunction}), and hence obtain an infimum formula for the nc Christoffel function (see \Cref{thm:CFExists}).
    \item[(C3)] Our main result is a noncommutative analogue of \Cref{thm:CommSzego} (see \Cref{thm:NCSzego}), though in our setting the list splits in two; this in particular shows that an analogue of the left hand side of \eqref{eqn:SzegoVerblunsky} depends only upon the absolutely continuous part of the nc measure.
    \item[(C4)] We establish an analogue of the Zeros Theorem on the row-ball for orthonormal polynomials associated to an nc measure (see \Cref{thm:NCZerosThm}).
\end{enumerate}

The paper is structured as follows. In Section 2, we briefly discuss the relevant parts of the nc measure theory of Jury and Martin. Section 3 establishes the correspondence (C1). In Section 4 we apply the ideas of \cite{CJ02b} to develop and study analogues for Verblunsky coefficients and quantities such as orthonormal/monic orthogonal polynomials or Toeplitz determinants in the setting of nc measures. In Section 5 we study the nc Christoffel approximates of an nc measure and establish (C2). Section 6 is devoted primarily to proving our main result (C3). Section 7 introduces determinantal zeros of a noncommutative polynomial to establish (C4).

\subsection{Indices and Notation}

Herein we list the main items of notation we refer to throughout the paper. We begin with a discussion of the free monoid on \(d\) generators, \(\FF_d^+\), which appears often in place of \(\NN\) in the free noncommutative setting. We use the convention of denoting the generators \(1, \ldots, d\) and the identity by \(\emptyset\); elements of \(\FF_d^+\) are called \emph{words}, and the monoid product is concatenation of words. The \emph{length} of a word \(\s \in \FF_d^+\), denoted \(\lvert \s \rvert\), is the number of generators used to form \(\s\): for example, \(\lvert 112\rvert = 3\), and by convention we use \(\lvert \emptyset \rvert = 0\). Given a tuple \(Z = (Z_1, \ldots, Z_d)\), of operators or of formal variables, we shall use the notation
\[
    Z^\s := Z_{\s_1} \cdot \cdots \cdot Z_{\s_n}
\]
whenever we write the word \(\s\) as \(\s = \s_1 \cdots \s_n \in \FF_d^+\) with \(\s_1, \ldots, \s_n \in \{1, \ldots, d\}\). In other words, we write out the word \(\s\) with \(Z_j\) in place of each generator \(j\) appearing in \(\s\) --- for example, \((Z_1, Z_2)^{112} = Z_1 Z_1 Z_2 = Z_1^2 Z_2\). We take the convention that \(Z^{\emptyset} = 1\).

We shall also have cause to refer to the \emph{short lexicographical} (shortlex) ordering on \(\FF_d^+\); when \(\s\) precedes \(\t\) in this ordering, we write \(\s \prec \t\). Explicitly, this is the ordering obtained by declaring for generators that \(1 \prec \cdots \prec d\), that \(\s \prec \t\) if \(\lvert \s \rvert < \lvert \t \rvert\), and that if \(\lvert \s \rvert = \lvert \t \rvert = n\), we say that \(\s = \s_1\cdots\s_n \prec \t_1\cdots\t_n = \t\) if there exists \(j = 1, \ldots, n\) such that
\[
    \s_1 = \t_1, \ldots, \s_{j-1} = \t_{j-1} \text{ and } \s_j \prec \t_j.
\]
For example, when \(d = 2\), the shortlex ordering on the first few elements of \(\FF_2^+\) is
\[
    \emptyset \prec 1 \prec 2 \prec 11 \prec 12 \prec 21 \prec 22 \prec 111 \prec 112 \prec \ldots.
\]
We denote by \(\s(n)\) the final word under the shortlex ordering of length \(n\), which is the generator \(d\) repeated \(n\) times; the immediate predecessor of \(\s \in \FF_d^+\) with respect to the shortlex ordering is denoted \(\s - 1\). For example, when \(d=2\), \(\s(1) = 2\), \(\s(3) = 222\), \(112 - 1 = 111\) and \(11 - 1 = 2\).

Of course, when \(d = 1\), we have \(\FF_1^+ = \{\emptyset, 1, 11, \ldots \} = \{1^{j} : j = 0, 1, 2, \ldots\} \cong \NN\), the shortlex ordering is the usual ordering on \(\NN\), and \(\s(n) = n\) is the only word of length \(n\).

\bigskip

The remainder of this section contains a list of commonly-used notation for the reader's convenience.

\bigskip

\begin{itemize}
    \item \(\DD, \overline{\DD}, \TT\) - the open and closed unit discs and the unit circle in the complex plane, respectively.
    \item \(\s, \t\, \a\) - words in \(\FF_d^+\).
    \item \(H^2(\DD)\) - the Hardy space on the unit disc.
    \item \(A(\DD)\) - the disc algebra.
    \item \(C(\TT)\) - the Banach space of continuous functions \(\TT \to \CC\) with supremum norm \(\norm{\cdot}_\infty\).
    \item \(\CC^d_{\nc}\) - the nc space in \(d\) variables; see Section 2.1.
    \item \(\BB_{\nc}^d\) - the row-ball in \(d\) variables; see Section 2.1.
    \item \(\BB^d_n\) - the unit ball in \((\CC^{n\times n})^d\), i.e. the \(n^{\text{th}}\) level of the nc set \(\BB^d_\nc\).
    \item \(\mathrm{Ext}(\overline{\BB}^d_{\nc})\) - the set taking the place of \(\CC\setminus\overline{\DD}\) in our study of determinantal zeros of orthogonal polynomials; see \Cref{lem:ZerosInside}.
    \item \(\partial \BB_{\nc}^d\) - the distinguished boundary of the row-ball; see Section 7.
    \item \(\HH^2_d\) - the nc Hardy space in \(d\) variables, i.e. the full Fock space on \(\CC^d\); see Section 2.1.
    \item \(L = (L_1, \ldots, L_d)\) - the left free shift on \(\HH^2_d\); see \Cref{def:LeftShift}.
    \item \(\ncm{d}\) - the space of (positive) nc measures in \(d\) variables; see \Cref{def:NCMeasure}.
    \item \(\mu, \mu_{\ac}, \mu_\rms\) - either a measure on the unit circle or an nc measure, and respectively its absolutely continuous and singular parts; see Section 2.2.
    \item \(T\) - a closed, densely-defined, positive \(L\)-Toeplitz operator serving as the nc Radon--Nikodym derivative of an nc measure; see \Cref{def:leftToeplitzOperator}.
    \item \(K\) - a multi-Toeplitz kernel on \(\FF_d^+\) with entries \(K(\s,\t)\) for \(\s,\t \in \FF_d^+\); see \Cref{def:multiToeplitzKernel}.
    \item \(K_\mu, \mu_K\) - given a multi-Toeplitz kernel \(K\), \(\mu_K\) is the nc measure corresponding to \(K\); given an nc measure \(\mu\), \(K_\mu\) is the multi-Toeplitz kernel corresponding to \(\mu\); see \Cref{thm:Correspondence}.
    \item \((\g_{\s,\t})_{\s,\t \in \FF_d^+}\) - a sequence associated to a multi-Toeplitz kernel by \cite{CJ02b} there called the Schur parameters of the kernel.
    \item \((\g_{\s})_{\s \in \FF_d^+}\) - the Verblunsky coefficients of an nc measure (or multi-Toeplitz kernel) given by \(\g_{\s} := \g_{\emptyset,\s}\); see \Cref{def:VerblunskyCoefficients}.
    \item \(d_\s\) - the defect of the Verblunsky coefficient \(\g_\s\), \(d_\s := \sqrt{1 - \lvert \g_\s \rvert^2}\).
    \item \((D_\s)_{\s\in\FF_d^+}\) - the multi-Toeplitz determinants of the multi-Toeplitz kernel \(K\), for \(\s_0 \in \FF_d^+\) given by \(D_{\s_0} := \det \begin{bmatrix} K(\s, \t) \end{bmatrix}_{\emptyset \preceq \s,\t \preceq \s_0}\).
    \item \(\CC[z]\) - the space of polynomials in one variable \(z\) with complex coefficients.
    \item \(\CC[z]_n\) - the space of polynomials in one variable \(z\) with complex coefficients of degree at most \(n\).
    \item \(\CC\langle Z \rangle = \CC \langle Z_1, \ldots, Z_d \rangle\) - the space of polynomials in noncommuting indeterminates \(Z = (Z_1, \ldots, Z_d)\), i.e. finite \(\CC\)-linear combinations of elements of the set \(\{Z^{\s} : \s \in \FF_d^+\}\); elements of this space are called noncommutative (nc) polynomials.
    \item \(\CC\langle Z \rangle_\s\) - the space of nc polynomials \(\sum_{\t \in \FF_d^+} c_\t Z^{\t}\) such that \(c_\t = 0\) whenever \(\t \succ \s\) in the shortlex ordering.
    \item \(\langle \cdot, \cdot \rangle_K\) and \(\langle \cdot , \cdot \rangle_\mu\) - the inner product induced by a multi-Toeplitz kernel \(K\) and an nc measure \(\mu\), respectively.
    \item \(\norm{\cdot}_K, \norm{\cdot}_\mu\) - the norms induced by the inner products above.
    \item \(L^2(K), L^2(\mu)\) - the closure of \(\CC\langle Z_1, \ldots, Z_d\rangle\) with respect to \(\norm{\cdot}_K, \norm{\cdot}_\mu\) respectively.
    \item \((\p_\s)_{\s \in \FF_d^+}, (\P_\s)_{\s\in\FF_d^+}\) - respectively the orthonormal and monic orthogonal polynomials associated to an nc measure.
    \item \((\p_\s^{\#})_{\s \in \FF_d^+}, (\P_\s^{\#})_{\s\in\FF_d^+}\) - respectively the reverse polynomials of the orthonormal and monic orthogonal polynomials associated to an nc measure; see \Cref{def:reversePolynomials}.
    \item \(\a_{\s,\t}\) - the coefficient of \(Z^\t\) in \(\p_\s\).
    \item \(\k_{\mu,n}\) - the \(n^{\text{th}}\) nc Christoffel-Darboux kernel associated to an nc measure \(\mu\); see \Cref{def:ncCDK}.
    \item \(\l_n(\cdot; \rmd \mu)\) - the \(n^{\text{th}}\) classical Christoffel approximate associated to a measure \(\mu\) on the unit circle.
    \item \(\l_{\infty}(\cdot; \rmd \mu)\) - the classical Christoffel function associated to a measure \(\mu\) on the unit circle.
    \item \(\L_n(\cdot; \mu)\) - the \(n^{\text{th}}\) nc Christoffel approximate associated to an nc measure \(\mu\); see \Cref{def:ncCDK}. 
    \item \(\L_\infty(\cdot; \mu)\) - the nc Christoffel function associated to an nc measure \(\mu\); see \Cref{def:ncChristoffelFunction}.
    \item \(Q_n\) - the minimising polynomial for the minimum form of \(\L_n(\cdot; \mu)\); see \Cref{rem:Minimiser}.
    \item \(\cZ(P)\) - the set of determinantal zeros of an nc polynomial \(P\); see \Cref{def:DeterminantalZero}.
    \item \(0^{(d)}_{1\times1}\) - the \(d\)-tuple of scalar zeros \((0, \ldots, 0) \in \CC^d\).
\end{itemize}

\section{Background: NC Measures}

\subsection{Motivation and Definitions}

Herein we briefly recall some relevant notions and definitions; much more detailed exposition on this topic can be found in, for example, Sections 2 and 3 of \cite{JM22a}, which we follow for much of this section.

We begin with a particular construction of some classical objects. Let \(H^2(\DD)\) be the Hardy space on the disc, that is, the holomorphic function space
\[
    H^2(\DD) = \{f(z) = \sum_{n = 0}^{\infty} c_n z^n : \sum_{n=0}^{\infty} \lvert c_n \rvert^2 < \infty \},
\]
and let \(S : H^2(\DD) \to H^2(\DD)\) be the forward shift operator given by \((Sf)(z) = zf(z)\). Furthermore, let \(A(\DD)\), the disc algebra, be the Banach algebra
\[
    A(\DD) = \left\{f: \overline{\DD} \to \CC : f \text{ is coontinuous}  \; {\rm and} \; f\vert_{\DD} \text{ holomorphic}\right\}
\]
equipped with the supremum norm.

Firstly, note that \(S\) does not satisfy any polynomial identities, and has operator norm \(\norm{S}_{\mathrm{op}} = 1\), so that the map
\[
    \CC[z] \to \{p(S) : p \in \CC[z] \}, \quad z \mapsto S
\]
is a bijective isometry. It is well-known that the disc algebra is the supremum-norm closure of \(\CC[z]\), and therefore can be realised as the operator-norm closure of the set of polynomials in \(S\):
\[
    A(\DD) \cong \overline{\left\{ p(S) : p \in \CC[z] \right\}}^{\norm{\cdot}_{\mathrm{op}}},
\]
that is, \(A(\DD)\) is the operator-norm-closure of the algebra generated by the identity and \(S\).

By the Stone--Weierstrass theorem, any continuous, complex-valued function on the unit circle \(\TT\) can be uniformly approximated by polynomials in \(z\) and \(\overline{z}\) and therefore by elements of \(A(\DD) + A(\DD)^*\) (where \(A(\DD)^* = \{\overline{f} : f \in A(\DD)\}\)). From this, it follows that
\[
    C(\TT) = \overline{A(\DD) + A(\DD)^*}^{\norm{\cdot}_{\mathrm{op}}}.
\]
By the Riesz--Markov--Kakutani representation theorem, any (positive) finite, regular Borel measure on \(\TT\) induces a (positive) bounded linear functional on \(C(\TT)\) and vice versa. Hence elements of the dual of \(\overline{A(\DD) + A(\DD)^*}^{\norm{\cdot}_{\mathrm{op}}}\) and finite, regular Borel measures on \(\TT\) are in bijection.

The noncommutative setup generalises this construction. We begin with the \(d\)-variate noncommutative stand-in for the complex plane, the \emph{nc space} 
\[
    \CC^d_{\nc} := \dju (\CC^{n\times n})^d
\]
consisting of tuples of square matrices of arbitrary size, and for the unit disc, the \emph{row-ball}
\[
    \BB^d_{\nc} := \dju \left\{Z = (Z_1, \ldots, Z_d) \in (\CC^{n\times n})^d : Z_1 Z_1^* + \cdots + Z_d Z_d^* < I_n \right\},
\]
consisting of tuples of strict row contractions. The row-ball is a noncommutative set in the sense of \cite{KVV14} with matrix ``levels" or ``windows"
\[
    \BB^d_n = \left\{Z = (Z_1, \ldots, Z_d) \in (\CC^{n\times n})^d : Z_1 Z_1^* + \cdots + Z_d Z_d^* < I_n \right\}.
\]
A foundational result of Popescu \cite{Pop06b} says that any formal power series in \(d\) noncommuting variables with square-summable complex coefficients, that is any
\[
    \sum_{\s \in \FF_d^+} c_\s Z^\s \quad \text{ such that } \quad \sum_{\s \in \FF_d^+} \lvert c_\s \rvert^2 < \infty,
\]
converges absolutely (in operator norm) on the row-ball, and uniformly on the subset \(r\BB^d_\nc\) for any \(0 < r < 1\). Such a power series can therefore be viewed as a locally bounded (hence holomorphic) nc function on the row-ball \cite[Chapter 8]{KVV14}. On the other hand, any locally bounded nc function on the row-ball has such a power series representation (the \emph{Taylor-Taylor series}) \cite[Chapter 7]{KVV14}, so that the Hilbert space
\[
    \HH^2_d := \{ f(Z) = \sum_{\s \in \FF_d^+} c_\s Z^\s : \sum_{\s \in \FF_d^+} \lvert c_\s \rvert^2 < \infty \}
\]
of all such power series is a noncommutative version of \(H^2(\DD)\). Indeed, \(\HH^2_1 \cong H^2(\DD)\) so that in one variable the noncommutative function theory coincides with the classical function theory.

\begin{remark}
    The Hardy space \(H^2(\DD)\) arises variably as the space of holomorphic functions \(\DD \to \DD\) with square-summable Taylor series coefficients, the reproducing kernel Hilbert space corresponding to the Szeg\H{o} kernel
    \[
        k_S(z,w) := \frac{1}{1-z\overline{w}} = \sum_{n=0}^{\infty} z^n \overline{w^n}, \quad z,w \in \DD,
    \]
    and as the holomorphic function space isomorphic as a Hilbert space to \(\ell^2(\NN)\). The nc Hardy space \(\HH^2_d\) similarly arises as the space of holomorphic nc functions described above, as the noncommutative reproducing kernel Hilbert space (in the sense of \cite{BMV16}) corresponding to the nc Szeg\H{o} kernel
    \[
        K_S(Z,W)[\cdot] := \sum_{\s \in \FF_d^+} Z^\s [\cdot] (W^\s)^*, \quad Z, W \in \BB^d_{\nc},
    \]
    and as the full Fock space on \(\CC^d\),
    \[
        F^2(\CC^d) := \bigoplus_{k=0}^{\infty} \left(\CC^d\right)^{\otimes k},
    \]
    which is naturally isomorphic to \(\ell^2(\FF_d^+)\) by sending the basis element \(e_{\s_1} \otimes \cdots \otimes e_{\s_n} \in F^2(\CC^d)\) to \(e_{\s_1\cdots\s_n} \in \ell^2(\FF_d^+)\) for \(\s_1 \cdots \s_n \in \FF_d^+\). A detailed discussion of the different origins of \(\HH^2_d\) with further references can be found in, for example, \cite{JM21}.
\end{remark}

\begin{definition}
    \label{def:LeftShift}
    Let \(\HH^2_d\) be the nc Hardy space. The \emph{left free shift} is the \(d\)-tuple
    \[
        L = (L_1, \ldots, L_d),
    \]
    where for \(j = 1, \ldots, d\), \(L_j\) is the \(j^{\text{th}}\) (left) shift operator on \(\HH^2_d\) given by
    \[
        (L_j f)(Z) = Z_j f(Z).
    \]
\end{definition}

\begin{remark}
    The left free shift's components \(L_j, j = 1, \ldots, d,\) are each isometries on \(\HH_d^2\), with pairwise orthogonal ranges:
    \[
        L_j^* L_k = \d_{jk} I_{\HH^2_d}, \quad 1 \leq j,k \leq d;
    \]
    these conditions together make \(L\) a \emph{row-isometry}, an isometry \((\HH^2_d)^d \to \HH^2_d\). This property will be central to our study in particular in Section 3.
\end{remark}

Emulating the construction outlined above, the \emph{nc disc algebra} is the operator norm-closure of the space of noncommutative polynomials evaluated on the left free shift:
\[
    \cA_d := \overline{\{p(L) : p \in \CC\langle Z_1, \ldots, Z_d \rangle\}}^{\norm{\cdot}_{\mathrm{op}}};
\]
similarly, this is the norm-closure of the algebra generated by the identity operator and the components of \(L\).

We next construct the \emph{nc disc system} from \(\cA_d\) and the space of adjoints of elements of \(\cA_d\): defining \(\cA_d^* := \{f^* : f \in \cA_d\}\) where \(f^* : Z \mapsto f(Z)^*\), the nc disc system is the operator system
\[
    \scrA_d := \overline{\cA_d + \cA_d^*}^{\norm{\cdot}_{\mathrm{op}}}.
\]
Notice that \(\scrA_1 \cong C(\TT)\) and recall that positive finite regular Borel measures on \(\TT\) are in bijection with positive elements of \(C(\TT)^{\dagger}\), the dual of \(C(\TT)\). Motivated by this, we define an nc measure as follows.

\begin{definition}
    \label{def:NCMeasure}
    A \emph{noncommutative (nc) measure} \(\mu\) is a positive functional on the nc disc system, i.e. a linear map
    \[
        \mu : \scrA_d \to \CC
    \] 
    such that \(\mu(f) \geq 0\) for all \(f \in \scrA_d\) such that \(f \geq 0\). We write \(\mu \in \ncm{d}\).
\end{definition}

\subsection{The Lebesgue Decomposition and the Radon--Nikodym Derivative}

Identify \(H^2(\DD)\) with the subspace \(H^2(\TT)\) of \(L^2(\TT)\) by identifying \(f \in H^2(\DD)\) with the function \(\tilde{f} \in L^2(\TT)\) given by
\[
    \tilde{f}(\z) = \lim_{r \to 1} f(r\z),
\]
which is defined almost everywhere \cite[Theorem 3.3.8]{Kat04}.

An operator \(T : H^2(\DD) \to H^2(\DD)\) is called \emph{Toeplitz} with symbol \(g \in L^{\infty}(\TT)\) if
\[
    T = PM_g \vert_{H^2(\TT)},
\]
where \(P\) is the projection onto \(H^2(\TT)\) and \(M_g\) is the multiplication operator induced by \(g\). A famous theorem of Brown and Halmos \cite[Theorem 6]{BH64} says that \(T\) is Toeplitz if and only if the \emph{Brown--Halmos identity},
\[
    S^* T S = T
\]
holds, where \(S\) is the forward shift operator on \(H^2(\DD)\). This has a natural extrapolation to \(d\) noncommuting variables: the following definition was given by Popescu and has subsequently been studied extensively, e.g. in \cite{Pop06}.

\begin{definition}
    \label{def:multiToeplitzOperator}
    Let \(T : \HH^2_d \to \HH^2_d\) be a bounded positive operator. We say \(T\) is \emph{multi-Toeplitz} if
    \[
        L_j^* T L_k = \d_{jk} I_{\HH^2_d}, \quad 1 \leq j,k \leq d,
    \]
    i.e. if \(T\) satisfies \(d\) separate Brown--Halmos identities.
\end{definition}

In \cite{JM22a}, \cite{JM22b}, Jury and Martin study a related class of operators.

\begin{definition}[\cite{JM22b}, Definition 4.4]
    \label{def:leftToeplitzOperator}
    A closed, positive, densely-defined operator \(T: \cD(T) \subseteq \HH^2_d \to \HH^2_d\) is called \emph{left-} or \emph{\(L\)-Toeplitz} if
    \begin{enumerate}
        \item \(\cD(\sqrt{T})\) is \(L\)-invariant, i.e. \(L_j \cD(\sqrt{T}) \subseteq \cD(\sqrt{T})\) for \(j = 1, \ldots, d\), and
        \item the associated closed quadratic form
        \[
            q_T(g,h) := \langle \sqrt{T} g, \sqrt{T} h \rangle_{\HH^2_d}, \quad g,h\in \cD(\sqrt{T})
        \]
        satisfies
        \[
            q_T(L_j g, L_k h) = \d_{jk} q_T(g,h), \quad g,h \in \cD(\sqrt{T}).
        \]
    \end{enumerate}
\end{definition}

\begin{remark}
    The connection between these notions can be found in \cite[Remark 6]{JM22a}: if \(T : \HH^2_d \to \HH^2_d\) is bounded, then \(T\) is \(L\)-Toeplitz if and only if \(T\) is multi-Toeplitz. In this sense, one may consider \(L\)-Toeplitz operators an unbounded generalisation of multi-Toeplitz operators.
\end{remark}

Building on the Lebesgue decomposition for (positive semi-definite) quadratic forms of Simon \cite{Sim78}, Jury and Martin prove a decomposition (with respect to the nc Lebesgue measure) for nc measures \cite[Theorem 4]{JM22a}: if \(\mu \in \ncm{d}\), there exists a unique decomposition
\[
    \mu = \mu_{\ac} + \mu_{\rms}
\]
where \(\mu_{\ac}, \mu_{\rms} \leq \mu\) are nc measures, \(\mu_{\ac}\) is the maximal absolutely continuous (with respect to the nc Lebesgue measure) nc measure bounded above by \(\mu\), and \(\mu_\rms\) is a singular (again with respect to the nc Lebesgue measure) nc measure; here a functional \(\nu\) is absolutely continuous (resp. singular) if the quadratic form \(q_\nu(a_1, a_2) := \nu(a_1^* a_2)\) \cite[Equation (4.3)]{JM22a} on \(\cA_d \times \cA_d\) is absolutely continuous (resp. singular) in the sense of \cite{Sim78}.

Moreover, \cite[Theorem 5, Lemma 2]{JM22a} if \(\mu\) is an absolutely continuous nc measure, then there exists a unique closed, densely-defined positive \(L\)-Toeplitz operator \(T : \cD(T) \subseteq \HH^2_d \to \HH^2_d\) such that \(\cA_d \subseteq \cD(\sqrt{T})\) and \cite[Equation (6.1)]{JM22a}
\begin{equation}
    \label{eqn:NCRadonNikodym}
    \mu_{\ac}(a_2^*a_1) = \langle \sqrt{T}a_1, \sqrt{T}a_2 \rangle_{\HH^2_d}, \quad a_1, a_2 \in \cA_d.
\end{equation}
\begin{definition}
    \label{def:NCRadonNikodym}
    Let \(\mu \in \ncm{d}\) be an nc measure. The \emph{nc Radon--Nikodym derivative} of \(\mu\) is the closed, densely-defined positive \(L\)-Toeplitz operator \(T\) given by \eqref{eqn:NCRadonNikodym}, which will in general be unbounded.
\end{definition}

As Jury and Martin elaborate in \cite[Section 6]{JM22a}, this operator \(T\) can be thought of as a noncommutative Radon--Nikodym derivative for the nc measure \(\mu\), see the discussion preceding Theorem 6.2. In this way, we may think of closed, densely-defined \(L\)-Toeplitz operators on \(\HH^2_d\) as corresponding to absolutely continuous nc measures; hence the theory e.g. surrounding the factorisation results relevant to Szeg\H{o}'s theorem in \cite{Pop06} can be re-cast in terms of absolutely continuous nc measures.

In the next section we offer another relationship along these lines, associating to any nc measure a multi-Toeplitz \emph{kernel} on \(\FF_d^+\), though this kernel may no longer define an unbounded operator on \(\HH^2_d\).

\subsection{The Univariate Setting}

We stress that this function theory is a genuinely multivariate phenomenon.

When \(d = 1\), the nc Hardy space \(\HH^2_1\) coincides with the usual Hardy space on the disc:
\[
    \HH^2_1 = \{ f(Z) = \sum_{\s \in \FF_1^+} c_\s Z^\s : \sum_{\s \in \FF_1^+} \lvert c_\s \rvert^2 < \infty \} = \{ f(Z) = \sum_{n=0}^{\infty} c_n Z^n : \sum_{n=0}^{\infty} \lvert c_n \rvert^2 < \infty \} = H^2(\DD).
\]
It follows that any nc function on the row-ball in one variable, \(\BB^1_\nc\), is completely determined by its behaviour on the scalar level \(\BB^1_1 = \DD\), and the holomorphic function theory in this case reduces to the classical theory.

Moreover, in a similar fashion, recall that by the Stone--Weierstrass theorem \(\scrA_1 \cong C(\TT)\). Hence nc measures in one variable are simply positive functionals on \(C(\TT)\), and by the Riesz--Markov--Kakutani representation theorem, such functionals correspond precisely to finite, positive, regular Borel measures on \(\TT\) so that again we recover the classical theory.

We remark that Verblunsky coefficients, orthogonal polynomials, and Szeg\H{o} limit theorems have all been studied in the univariate, matrix-valued setting, see e.g. \cite{DPS08}, \cite{DK16}. However, in this setting, the candidate measures are no longer scalar-valued but rather matrix-valued, and so this theory generalises the classical Szeg\H{o} theory in a different direction to that of nc measures.

\section{The Correspondence Between NC Measures and Multi-Toeplitz Kernels}

\begin{definition}
    Generalising the case where \(d = 1\), we say that a kernel \(K : \FF_d^+ \times \FF_d^+ \to \CC\) is \emph{positive} if the matrix \(K_{\s_0} := [K(\s,\t)]_{\s,\t \preceq \s_0}\) is positive semi-definite for all \(\s_0 \in \FF_d^+\). For example, if \(d = 2\), then
    \[
        \FF_2^+ = \{\emptyset, 1, 2, 11, 12, 21, 22, 111, 112, 121, \ldots\}
    \]
    (here we have listed the elements of \(\FF_2^+\) according to the shortlex ordering) and so e.g.:
    \begin{gather*}
        K_{\emptyset} = \begin{bmatrix}K(\emptyset, \emptyset)\end{bmatrix}, \\
        K_{1} = \begin{bmatrix}K(\emptyset, \emptyset) & K(\emptyset, 1) \\ K(1, \emptyset) & K(1,1)\end{bmatrix},  \\
        K_{2} = \begin{bmatrix}K(\emptyset, \emptyset) & K(\emptyset, 1) & K(\emptyset, 2)\\ K(1, \emptyset) & K(1,1) & K(1,2) \\ K(2, \emptyset) & K(2,1) & K(2,2)\end{bmatrix}, \\
        \text{and } K_{11} = \begin{bmatrix}K(\emptyset, \emptyset) & K(\emptyset, 1) & K(\emptyset, 2) & K(\emptyset, 11)\\ K(1, \emptyset) & K(1,1) & K(1,2) & K(1,11) \\ K(2, \emptyset) & K(2,1) & K(2,2) & K(2,11) \\ K(11, \emptyset) & K(11, 1) & K(11,2) & K(11,11)\end{bmatrix}.
    \end{gather*}
\end{definition}

In \cite{CJ02b}, the authors study a class of positive kernels \(K\) indexed by elements of \(\FF_d^+\), satisfying two additional axioms:
\begin{equation}
    \label{CJAx1}
    K(\t \s, \t\s') = K(\s,\s'), \quad \t, \s, \s' \in \FF^+_d,
\end{equation}
\begin{equation}
    \label{CJAx2}
    K(\sigma, \tau) = 0 \quad \text{ if there is no \(\a \in \FF_d^+\) such that \(\s = \a\t\) or \(\t = \a\s\).}
\end{equation}
The former, \eqref{CJAx1}, is a generalisation of the notion of Toeplitzness, or of shift-invariance: in one variable, this becomes
\[
    K(n+k, m+k) = K(n,m)
\]
so that our kernel is constant along each superdiagonal. On the other hand, the condition for \eqref{CJAx2} (and indeed for \eqref{CJAx2'} below) never occurs in one variable, as there is always some \(k \in \NN\) such that \(n = m + k\) or \(m = n + k\), so this phenomenon is genuinely multivariate.

Given a positive kernel satisfying \eqref{CJAx1} and \eqref{CJAx2}, \cite{CJ02b} goes on to derive sets of Verblunsky coefficients and polynomials which are orthonormal with respect to the kernel, both of which are also indexed by noncommutative words. This approach, based in the matrix theory of \cite{Con96}, reproduces the classical theory of Verblunsky coefficients and orthogonal polynomials when \(d = 1\).

Meanwhile, Jury and Martin (in, e.g., \cite{JM22a}, \cite{JM22b}) recently established a theory of noncommutative measures which also reduces to the classical theory in one variable. When \(d = 1\), kernels and measures enjoy a natural one-to-one correspondence, which leads one naturally to question whether this still holds in several noncommuting variables. As we shall see, nc measures and the kernels studied by \cite{CJ02b} do not correspond directly, but nc measures \emph{do} correspond to a very similar class of kernels. Motivated by \Cref{thm:Correspondence}, we shall be interested instead in so-called \emph{multi-Toeplitz} kernels.

\begin{definition}
    \label{def:multiToeplitzKernel}
    We say that a positive kernel \(K : \FF_d^+ \times \FF_d^+ \to \CC\) is \emph{multi-Toeplitz} if \(K\) satisfies the axioms
    \begin{equation}
    \label{CJAx1'}
        K(\t \s, \t\s') = K(\s,\s'), \quad \t, \s, \s' \in \FF^+_d,
    \end{equation}
    \begin{equation}
    \label{CJAx2'}
        K(\sigma, \tau) = 0 \quad \text{ if there is no \(\a \in \FF_d^+\) such that \(\s = \t\a\) or \(\t = \s\a\).}
    \end{equation}
\end{definition}

\begin{definition}
    \label{def:multiToeplitzDeterminants}
    Let \(K\) be a multi-Toeplitz kernel. The \emph{multi-Toeplitz determinants} of \(K\) are the determinants
    \[
        D_{\s_0} := \det \begin{bmatrix}
            K(\s, \t)
        \end{bmatrix}_{\s,\t \preceq \s_0} = \det K_{\s_0}
    \]
    for \(\s_0 \in \FF_d^+\).
\end{definition}

Multi-Toeplitz kernels are, of course, very similar to those of \cite{CJ02b}, with only a ``twisted" version of \eqref{CJAx2}, and so almost all of the theory established therein will apply also to these modified kernels. Critically, though, we shall see that this class corresponds precisely to that of nc measures in exactly the way measures and kernels correspond in one variable (indeed, in one variable, our correspondence reduces precisely to the classical one). 

\begin{remark}
   As discussed across the previous two sections, multi-Toeplitz kernels have been studied by a number of authors: they are the focus of numerous works of Popescu such as \cite{Pop01} and \cite{Pop06}, and (when they define bounded operators on \(\HH^2_d\)) they appear in Jury and Martin's works \cite{JM22a}, \cite{JM22b}, where they are called \emph{L-Toeplitz}. Constantinescu and Johnson have also studied multi-Toeplitz kernels, mentioning them explicitly in \cite{CJ02a} (though defined via \eqref{CJAx2} rather than \eqref{CJAx2'}). 
\end{remark}

\begin{remark}
    \label{rem:PopescuWeakSzego}
   We recall that \cite{Pop01} also considers objects associated to a multi-Toeplitz kernel analogous to the classical Verblunsky coefficients --- there dubbed \emph{generalised Schur sequences} --- but that this construction differs from the perspective presented herein in several key ways. In particular, the generalised Schur sequences of \cite{Pop01} are genuine sequences (that is, indexed by \(\NN\)) of tuples of matrices, whereas the Verblunsky coefficients we consider here are more generally a family (indexed by \(\FF_d^+\)) of single complex numbers inside the unit disc.
\end{remark}

We formulate the first direction of our correspondence as follows.

\begin{lemma}
    \label{lem:Correspondence1}
    Let \(\mu \in \ncm{d}\) be an nc measure, and define 
    \[
        K_{\mu}(\s,\t) := \mu\big((L^{\t})^*L^{\s}\big) \text{ for words \(\s, \t \in \FF_d^+\),}
    \]
    where \(L\) is the left free shift (see \Cref{def:LeftShift}).
    Then \(K_\mu\) is a multi-Toeplitz kernel.
\end{lemma}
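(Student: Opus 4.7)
The plan is to verify each of the three defining conditions in \Cref{def:multiToeplitzKernel}: positivity of $K_\mu$, the shift-invariance relation \eqref{CJAx1'}, and the prefix-vanishing relation \eqref{CJAx2'}. The single essential tool is the row-isometry property $L_j^* L_k = \d_{jk} I_{\HH^2_d}$ of the left free shift, which lets us compute $(L^\t)^* L^\s$ explicitly for any pair of words; combining this with positivity of $\mu$ on $\scrA_d$ will give everything.

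For positivity, I would fix $\s_0 \in \FF_d^+$, take any scalars $(c_\s)_{\s \preceq \s_0}$, and form $p(Z) = \sum_{\s \preceq \s_0} c_\s Z^\s \in \CC\langle Z \rangle$. A direct rearrangement yields
\[
    \sum_{\s,\t \preceq \s_0} \overline{c_\t} c_\s K_\mu(\s,\t) \;=\; \mu\bigl(p(L)^* p(L)\bigr),
\]
and since $p(L)^* p(L)$ is a positive element of $\scrA_d$, positivity of $\mu$ gives the desired nonnegativity, so that $[K_\mu(\s,\t)]_{\s,\t \preceq \s_0}$ is positive semi-definite.

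For axiom \eqref{CJAx1'}, I would first observe that iterating $L_j^* L_j = I$ yields $(L^\t)^* L^\t = I_{\HH^2_d}$ for every word $\t$. Hence $(L^{\t\s'})^* L^{\t\s} = (L^{\s'})^* (L^\t)^* L^\t L^\s = (L^{\s'})^* L^\s$, and applying $\mu$ immediately gives $K_\mu(\t\s, \t\s') = K_\mu(\s, \s')$. For axiom \eqref{CJAx2'}, writing $\s = \s_1 \cdots \s_m$ and $\t = \t_1 \cdots \t_n$, I would expand
\[
    (L^\t)^* L^\s \;=\; L_{\t_n}^* \cdots L_{\t_1}^* L_{\s_1} \cdots L_{\s_m}
\]
and collapse the innermost pair $L_{\t_j}^* L_{\s_j}$ successively via $L_j^* L_k = \d_{jk} I$. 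This process either produces $0$ (whenever some $\t_j \neq \s_j$ before one side is exhausted) or, if WLOG $n \leq m$ and $\t$ is a prefix of $\s$ with $\s = \t\a$, reduces to $L^\a$; symmetrically one obtains $(L^\a)^*$ when $\t = \s\a$. Thus $(L^\t)^* L^\s = 0$ precisely when neither word is a prefix of the other, and applying $\mu$ verifies \eqref{CJAx2'}.

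The only subtlety is ensuring $(L^\t)^* L^\s$ actually lies in the domain $\scrA_d$ of $\mu$, but the computation above shows every such operator is either $0$, an $L^\a \in \cA_d$, or an $(L^\a)^* \in \cA_d^*$, hence in $\scrA_d$ automatically. Beyond this, the argument is essentially bookkeeping on the free monoid combined with two applications of the row-isometry relations, so I do not anticipate any serious obstacle.
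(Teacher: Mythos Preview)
Your proposal is correct and follows essentially the same approach as the paper: both arguments verify positivity by rewriting the quadratic form $\sum_{\s,\t}\overline{c_\t}c_\s K_\mu(\s,\t)$ as $\mu(p(L)^*p(L))$ for a polynomial $p$, and both deduce \eqref{CJAx1'} and \eqref{CJAx2'} directly from the row-isometry relations $L_j^*L_k=\d_{jk}I_{\HH^2_d}$ by collapsing $(L^\t)^*L^\s$ letter by letter. Your explicit remark that each $(L^\t)^*L^\s$ lands in $\scrA_d$ (as $0$, some $L^\a$, or some $(L^\a)^*$) is a small point the paper leaves implicit but is otherwise the same computation.
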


\begin{proof}
    For \(k,j = 1, \ldots, d\), we have the identity \(L_k^*L_j = \d_{k,j} I\) from \cite{JM22a}, so for \(\t,\s,\s' \in \FF_d^+\) where \(\t = \t_1\cdots\t_m\), \(\t_1, \ldots, \t_m \in \{1, \ldots, d\}\), we see that
    \begin{align*}
        K_{\mu}(\t\s, \t\s') & = \mu\big((L^{\t\s'})^*L^{\t\s}\big) = \mu\big( (L^{\t}L^{\s'})^* L^{\t}L^{\s}\big) = \mu\big( (L^{\s'})^* (L^{\t})^* L^{\t} L^{\s}\big) \\
        & = \mu\big( (L^{\s'})^* L_{\t_m}^* \cdots L_{\t_1}^* L_{\t_1} \cdots L_{\t_m} L^{\s}\big) = \mu\big( (L^{\s'})^* L^{\s}\big) \\
        & = K_{\mu}(\s, \s'),
    \end{align*}
    so \(K_\mu\) satisfies \eqref{CJAx1'}.

    Now, suppose that \(\s,\t \in \FF_d^+\) are such that there is no \(\a \in \FF_d^+\setminus\{\emptyset\}\) such that \(\s = \t\a\) or \(\t = \s\a\). If we write \(\s = \s_1 \cdots \s_m\) and \(\t = \t_1\cdots\t_n\) for \(\s_j,\t_j \in \{1, \ldots, d\}\) (supposing without loss of generality that \(n \leq m\)), then this means that \(\s_{j_0} \neq \t_{j_0}\) for some \(j_0 = 0, \ldots, n\): otherwise, we would have \(\s = \t \s_{n+1}\cdots\s_{m}\), contradicting the hypothesis. It follows that \(\d_{\t_{j_0},\s_{j_0}} = 0\), and we then have from \(L_k^*L_j = \d_{k,j} I\) that
    \begin{align*}
        K_{\mu}(\s,\t) & = \mu \big((L^\t)^*L^\s\big) = K_{\mu}\big( L_{\t_n}^*\cdots L_{\t_1}^* L_{\s_1} \cdots L_{\s_m}\big) \\
        & = \d_{\t_1,\s_1}\cdots \d_{\t_n,\s_n}\mu\big( L_{\s_{n+1}} \cdots L_{\s_m}\big) \\
        & = 0,
    \end{align*}
    so \(K_\mu\) satisfies \eqref{CJAx2'}.

    It remains to show that \(K_\mu\) is positive, i.e. that the square matrix \([K_{\mu}(\s,\t)]_{\s,\t \preceq \s_0}\) is positive semi-definite for any \(\s_0 \in \FF_d^+\). Fix \(\s_0 \in \FF_d^+\) and let \(n\) be its position under the (graded) lexicographical (shortlex) ordering, so that \([K_{\mu}(\s,\t)]_{\s,\t \preceq \s_0}\) is \(n\times n\), and let \(x = [x_{\emptyset}, \cdots, x_{\s_0}]^{\top} \in \CC^{n}\setminus{\{0\}}\). We see that
    \begin{align*}
        x^*[K_{\mu}(\s,\t)]_{\s,\t \preceq \s_0}x & = x^* \left[\sum_{\t \preceq \s_0} K_\mu(\s,\t) x_{\t}\right]_{\s \preceq \s_0} =\sum_{\s \preceq \s_0} \overline{x_{\s}} \sum_{\t \preceq \s_0} K_\mu(\s,\t) x_{\t}\\ 
        & = \sum_{\s,\t \preceq \s_0} K_\mu(\s,\t) \overline{x_{\s}} x_{\t} = \sum_{\s,\t \preceq \s_0} \mu\big((L^\t)^* L^\s) \overline{x_{\s}} x_{\t} \\
        & = \mu\left( \sum_{\s,\t \preceq \s_0} (\overline{x_\t}L^\t)^*(\overline{x_\s}L^\s)\right) = \mu\left(\sum_{\t \preceq \s_0} (\overline{x_\t}L^\t)^* \sum_{\s \preceq \s_0} \overline{x_\s}L^\s \right)\\
        & = \mu\left(\left(\sum_{\t \preceq \s_0} \overline{x_\t}L^\t\right)^* \left(\sum_{\s \preceq \s_0} \overline{x_\s}L^\s\right) \right),
    \end{align*}
    so that setting \(y := \sum_{\s \preceq \s_0} \overline{x_\s}L^\s \in \mathcal{A}_d\) (as a polynomial in \(L\)) we have that \(y \neq 0\) and therefore
    \[
        x^*[K_{\mu}(\s,\t)]_{\s,\t \preceq \s_0}x = \mu(y^* y) \geq 0,
    \]
    simply since \(\mu\) is positive by definition. Thus \([K_{\mu}(\s,\t)]_{\s,\t \preceq \s_0}\) is positive semi-definite and hence \(K_\mu\) is multi-Toeplitz.
\end{proof}

Classically this association is bijective, and we can show that this remains the case in the noncommutative theory. 

\begin{lemma}
    \label{lem:Correspondence2}
    Let \(K\) be a multi-Toeplitz kernel. For \(\s \in \FF_d^+\), define
    \[
        \mu_K(L^\s) := K(\s, \emptyset) \quad \text{ and } \quad \mu_K\big((L^\s)^*\big) := K(\emptyset, \s) = \overline{K(\s, \emptyset)}.
    \]
    Then \(\mu_K\) extends to a unique positive nc measure in \(\ncm{d}\), which we also denote by \(\mu_K\).
\end{lemma}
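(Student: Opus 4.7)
My plan is to construct $\mu_K$ via a GNS-type procedure from the positive kernel $K$, and then extend to $\scrA_d$ by appealing to the universality of the left free shift for row-isometries. First, I would equip $\CC\langle Z \rangle$ with the sesquilinear form $\langle p, q \rangle_K := \sum_{\s,\t \in \FF_d^+} p_\s \overline{q_\t}\, K(\s,\t)$ for $p = \sum p_\s Z^\s$ and $q = \sum q_\t Z^\t$; positivity of the kernel $K$ makes this positive semi-definite. Quotienting by its null space and completing produces a Hilbert space $\cH_K$ carrying a distinguished vector $\xi := [1]$ satisfying $\langle [Z^\s], [Z^\t] \rangle_K = K(\s,\t)$.

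Next, I would define $V_j[p] := [Z_j p]$ on $\cH_K$ and verify, using the multi-Toeplitz axioms of \Cref{def:multiToeplitzKernel}, that $V = (V_1, \ldots, V_d)$ extends to a well-defined row-isometry with $V_j^* V_k = \d_{jk} I_{\cH_K}$. Axiom \eqref{CJAx1'} gives $\langle V_j p, V_j q\rangle_K = \sum p_{\s'}\overline{q_{\t'}}\, K(j\s', j\t') = \langle p, q\rangle_K$, so each $V_j$ is isometric and therefore well-defined on the quotient. Axiom \eqref{CJAx2'} forces $K(j\s', k\t') = 0$ whenever $j \neq k$ (since no word starts with two distinct generators), yielding the orthogonality of the ranges of the $V_j$.

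With $V$ a row-isometry on $\cH_K$, the assignment $L_j \mapsto V_j$ respects the defining relations of isometries with pairwise orthogonal ranges, and so by Popescu's theorem on the universality of $C^*(L)$ for such tuples it extends uniquely to a unital $*$-homomorphism $\pi : C^*(L) \to B(\cH_K)$. Setting $\mu_K(X) := \langle \pi(X)\xi, \xi\rangle_K$ for $X \in \scrA_d \subseteq C^*(L)$ produces a positive functional, i.e.\ an element of $\ncm{d}$. A direct check gives $\mu_K(L^\s) = \langle V^\s\xi, \xi\rangle_K = \langle [Z^\s], [1]\rangle_K = K(\s, \emptyset)$ and $\mu_K((L^\t)^*) = \langle \xi, V^\t\xi\rangle_K = \overline{K(\t, \emptyset)} = K(\emptyset, \t)$, matching the prescribed formulae. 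Uniqueness is immediate: polynomial combinations of $L^\s$ and $(L^\t)^*$ are norm-dense in $\scrA_d$, so any continuous positive functional agreeing with $\mu_K$ on these generators must coincide with it everywhere.

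The principal step requiring care is the passage from polynomial data on the generators to a positive functional on the full operator system $\scrA_d$: this is precisely where both axioms in \Cref{def:multiToeplitzKernel} earn their keep, with \eqref{CJAx1'} supplying the isometry of each $V_j$ and \eqref{CJAx2'} supplying the orthogonality of their ranges. Without both, $V$ would fail to be a row-isometry and the universal representation of $C^*(L)$ could not be invoked to furnish the desired positive extension.
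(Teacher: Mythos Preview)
Your proposal is correct and takes a genuinely different route from the paper's argument. The paper extends $\mu_K$ to $\scrA_d$ by linearity and continuity, and then establishes positivity by invoking \cite[Lemma 4.6]{JM17} (positive elements of $\scrA_d$ are norm-limits of sums of Hermitian squares), approximating each square $a_k^* a_k$ by polynomial squares $p_j^* p_j$, and explicitly computing that $\mu_K\big((L^\t)^* L^\s\big) = K(\s,\t)$ via the relations $L_j^* L_k = \d_{jk} I$ together with both multi-Toeplitz axioms; positivity of $\mu_K(p_j^* p_j)$ then reduces to positivity of the kernel matrix of $K$.

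Your GNS-plus-universality argument trades this direct calculation for representation theory: once $V$ is a row-isometry on $\cH_K$, Popescu's identification of $C^*(L)$ with the Cuntz--Toeplitz algebra delivers the $*$-homomorphism $\pi$, and positivity and boundedness of the resulting vector state $X \mapsto \langle \pi(X)\xi, \xi\rangle_K$ are automatic. This is cleaner conceptually and sidesteps the sum-of-squares lemma entirely, at the cost of importing a heavier structural result. The paper's approach, by contrast, is more hands-on and yields the identity $\mu_K\big((L^\t)^* L^\s\big) = K(\s,\t)$ for all $\s,\t$ as a by-product of the positivity computation --- a fact your argument also recovers, but only implicitly through $\langle V^\s \xi, V^\t \xi\rangle_K = K(\s,\t)$. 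Both proofs use the two multi-Toeplitz axioms in essentially the same places: \eqref{CJAx1'} for the diagonal (isometry) behaviour and \eqref{CJAx2'} for the off-diagonal (orthogonality) vanishing.
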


\begin{proof}
    This argument hinges on a result of \cite{JM17}, namely, Lemma 4.6, that the positive elements of the nc disk system are norm limits of sums of (Hermitian) squares; though we do not apply it directly, it is also worth mentioning the foundational \cite{Hel02} at this point as a canonical Positivestellensatz in the free noncommutative setting.

    By definition, we have \(\scrA_d = \overline{\cA_d + \cA_d^*}^{\norm{\cdot}_{\mathrm{op}}}\) and \(\cA_d = \overline{\mathrm{Alg}\{ I_{\HH^2_d}, L_1, \ldots, L_d \}}^{\norm{\cdot}_{\mathrm{op}}}\). Extension by linearity and continuity therefore yields continuous linear functionals on \(\cA_d\) and \(\cA_d^*\), and then extending by linearity and continuity a second time we obtain a continuous linear functional on \(\scrA_d\); we denote this functional by \(\mu_K\).
    
    It remains to show that \(\mu_K\) is positive, so let \(f \in \scrA_d\) be a positive element. Then by the aforementioned \cite[Lemma 4.6]{JM17}, there exists \((a_k)_{k \in \NN} \subseteq \cA_d\) such that \(f = \sum_k a_k^* a_k\), where by the infinite sum we mean the norm-limit of the partial sums. Since each \(a_k\) is in \(\cA_d\), for each \(k \in \NN\) there exist \emph{polynomials} \((p^{(k)}_j)_{j \in \NN}\) such that \(a_k = \lim_j p^{(k)}_{j}\). Each of these polynomials can be written in the usual way as
    \[
        p^{(k)}_{j} = \sum_{\s \in \FF_d^+} c^{(k)}_{j,\s} L^\s
    \]
    where the coefficients \(c^{(k)}_{j,\s}\) are eventually all zero. It follows that
    \begin{align*}
        \left(p^{(k)}_{j}\right)^* p^{(k)}_{j} & = \left( \sum_{\s \in \FF_d^+} c^{(k)}_{j,\s} L^\s \right)^* \left( \sum_{\s \in \FF_d^+} c^{(k)}_{j,\s} L^\s \right) = \left( \sum_{\s \in \FF_d^+} \overline{c}^{(k)}_{j,\s} (L^\s)^* \right)\left( \sum_{\s \in \FF_d^+} c^{(k)}_{j,\s} L^\s \right) \\
        & = \sum_{\s, \t \in \FF_d^+} \overline{c}^{(k)}_{j,\t} c^{(k)}_{j,\s} (L^\t)^* L^\s.
    \end{align*}
    For some fixed \(\s,\t \in \FF_d^+\), if we write \(\s = \s_1 \cdots \s_n\) and \(\t = \t_1 \cdots \t_m\), then \(L_k^*L_j = \d_{k,j} I\) implies that
    \[
        (L^\t)^* L^\s = \begin{cases} \d_{\s_1, \t_1} \cdots \d_{\s_n, \t_n} (L^{\t_{n+1}\cdots\t_m})^*, \quad \text{ if \(m > n\);} \\ \d_{\s_1, \t_1} \cdots \d_{\s_n, \t_n}I, \quad \text{ if \(m = n\);} \\ \d_{\s_1, \t_1} \cdots \d_{\s_m, \t_m} L^{\s_{m+1}\cdots\s_n}, \quad \text{ if \(m < n\).}\end{cases}
    \]
    Applying \(\mu_K\) yields
    \begin{align}
    \label{eqn:MomentCases}
        \begin{split}
        \mu_k((L^\s)^* L^\t) & = \begin{cases} \d_{\s_1, \t_1} \cdots \d_{\s_n, \t_n} \mu_K\big((L^{\t_{n+1}\cdots\t_m})^*\big), \quad \text{ if \(m > n\);} \\ \d_{\s_1, \t_1} \cdots \d_{\s_n, \t_n}\mu_K(I), \quad \text{ if \(m = n\);} \\ \d_{\s_1, \t_1} \cdots \d_{\s_m, \t_m} \mu_K(L^{\s_{m+1}\cdots\s_n}), \quad \text{ if \(m < n\).}\end{cases} \\
        & = \begin{cases} \d_{\s_1, \t_1} \cdots \d_{\s_n, \t_n} K(\emptyset, \t_{n+1}\cdots\t_m), \quad \text{ if \(m > n\);} \\ \d_{\s_1, \t_1} \cdots \d_{\s_n, \t_n} K(\emptyset,\emptyset), \quad \text{ if \(m = n\);} \\ \d_{\s_1, \t_1} \cdots \d_{\s_m, \t_m} K(\s_{m+1}\cdots\s_n, \emptyset), \quad \text{ if \(m < n\).}\end{cases} \\
        & = \begin{cases} \d_{\s_1, \t_1} \cdots \d_{\s_n, \t_n} K(\t_1\cdots \t_n, \t), \quad \text{ if \(m > n\);} \\ \d_{\s_1, \t_1} \cdots \d_{\s_n, \t_n}\cdot 1, \quad \text{ if \(m = n\);} \\ \d_{\s_1, \t_1} \cdots \d_{\s_m, \t_m} K(\s, \s_1\cdots\s_m), \quad \text{ if \(m < n\).}\end{cases}
        \end{split}
    \end{align}
    where the final line holds by \eqref{CJAx1'}. Notice now that this is zero if, for some \(j = 1, \ldots, \min\{m,n\}\), we have \(\d_{\s_j, \t_j} = 0\), or in other words, if there does not exist \(\a \in \FF_d^+\) such that \(\s =  \t\a\) or that \(\t = \s\a\); by \eqref{CJAx2'}, this is also the value of \(K(\s,\t)\).
    
    Alternatively, suppose such an \(\a\) does exist: by comparing successive letters of the words \(\s\) and \(\t\), we have \(\t = \s_1 \cdots \s_m\) if \(m < n\) or \(\s = \t_1 \cdots \t_n\) if \(m > n\); if \(m = n\) then \(\s = \t\), and hence \(K(\s,\t) = 1\). In this case \eqref{eqn:MomentCases} transforms into
    \begin{align*}
        \mu_k((L^\t)^* L^\s) & = \begin{cases} 1 \cdot K(\s, \t), \quad \text{ if \(m < n\);} \\ 1, \quad \text{ if \(m = n\);} \\ 1 \cdot K(\s, \t), \quad \text{ if \(m > n\)}\end{cases} \\
        & = K(\s,\t).
    \end{align*}
   Hence in every case we have
    \[
        \mu_k((L^\t)^* L^\s) = K(\s,\t).
    \]
    
    Now, denote the highest \(\s\) such that \(c^{(k)}_{j,\s} \neq 0\) by \(\s^{(k)}_{j}\). Applying linearity of \(\mu_K\), we now return to our previous calculation to see that
    \begin{align*}
        \mu_K\left(\left(p^{(k)}_{j}\right)^*p^{(k)}_{j}\right) & = \sum_{\s,\t \in \FF_d^+} \overline{c}^{(k)}_{j,\t}c^{(k)}_{j,\s} K(\s,\t)\\
        & = \mathrm{col} \left\{ c^{(k)}_{j,\s} \right\}_{\s \preceq \s^{(k)}_{j}}^* \begin{bmatrix} K(\s,\t)\end{bmatrix}_{\s,\t \preceq \s^{(k)}_{j}} \mathrm{col}\left\{ c^{(k)}_{j,\s}\right\}_{\s \preceq \s^{(k)}_{j}} \\
        & \geq 0,
    \end{align*}
    by virtue of \(K\) being positive. Finally, since \(\mu_K\) is continuous by construction, we can see that \(\mu_K(f)\) is positive:
    \begin{align*}
        \mu_K(f) & = \mu\left( \sum_k a_k^* a_k\right) = \sum_k \mu \left( (\lim_j p^{(k)}_{j})^* (\lim_j p^{(k)}_{j})\right) \\
        & = \sum_k \lim_j \mu\left( \left(p^{(k)}_{j}\right)^* p^{(k)}_{j} \right) \\
        & \geq 0
    \end{align*}
    as a limit of a sum of limits of positive objects. Therefore \(\mu_K\) is a positive functional on \(\scrA_d\), i.e. \(\mu_K \in \ncm{d}\).
\end{proof}

To summarise, we have the following correspondence between multi-Toeplitz kernels and nc measures.

\begin{theorem}
    \label{thm:Correspondence}
    Let \(K\) be a multi-Toeplitz kernel and define a functional \(\mu_K\) on \(\scrA_d\) via
    \begin{equation*}
        \mu_K(L^\s) := K(\s, \emptyset) \quad \text{ and } \quad \mu_K\big((L^\s)^*\big) := K(\emptyset, \s) = \overline{K(\s, \emptyset)}
    \end{equation*}
    and by extending by linearity and continuity. Then \(\mu_K\) is an nc measure.

    Symmetrically, let \(\mu \in \ncm{d}\) be an nc measure and define a kernel on \(\FF_d^+\) by
    \begin{equation}
        \label{eqn:Correspondence}
        K_{\mu}(\s,\t) := \mu\big((L^{\t})^*L^{\s}\big).
    \end{equation}
    Then \(K_\mu\) is a multi-Toeplitz kernel.

    Thus the formula \eqref{eqn:Correspondence} defines a bijection between the class of multi-Toeplitz kernels on \(\FF_d^+\) and \(\ncm{d}\).
\end{theorem}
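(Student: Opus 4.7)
The plan is simply to combine the two preceding lemmas and verify that the two constructions are mutually inverse. The first two assertions of the theorem (that $\mu_K \in \ncm{d}$ whenever $K$ is a multi-Toeplitz kernel, and that $K_\mu$ is a multi-Toeplitz kernel whenever $\mu \in \ncm{d}$) are precisely the content of \Cref{lem:Correspondence2} and \Cref{lem:Correspondence1}, respectively, so it remains only to verify that these maps are two-sided inverses of one another.

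For the first direction, I would start with an nc measure $\mu \in \ncm{d}$, form the kernel $K_\mu(\s,\t) = \mu((L^\t)^*L^\s)$, and then form $\mu_{K_\mu}$. Evaluating the latter on a generator yields
\[
    \mu_{K_\mu}(L^\s) = K_\mu(\s,\emptyset) = \mu\bigl((L^{\emptyset})^* L^\s\bigr) = \mu(L^\s),
\]
and analogously $\mu_{K_\mu}((L^\s)^*) = \overline{K_\mu(\s,\emptyset)} = \overline{\mu(L^\s)} = \mu((L^\s)^*)$, where the last equality uses positivity (hence self-adjointness) of $\mu$. Since $\mu_{K_\mu}$ and $\mu$ are both continuous linear functionals on $\scrA_d$ that agree on the spanning set $\{L^\s, (L^\s)^* : \s \in \FF_d^+\}$, whose linear combinations are norm-dense in $\scrA_d$ by the definitions of $\cA_d$ and $\scrA_d$, the two functionals coincide.

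For the reverse direction, I would start with a multi-Toeplitz kernel $K$, form $\mu_K$, and then form $K_{\mu_K}$. The key identity needed is $\mu_K((L^\t)^*L^\s) = K(\s,\t)$ for every $\s,\t \in \FF_d^+$. This is exactly the case analysis carried out in the proof of \Cref{lem:Correspondence2}: writing $\s = \s_1\cdots\s_n$ and $\t = \t_1\cdots\t_m$, one uses the row-isometry relations $L_j^*L_k = \d_{jk}I$ to collapse $(L^\t)^*L^\s$ into either $0$, a scalar multiple of $I$, or a monomial $L^\a$ or $(L^\a)^*$; applying $\mu_K$ to each possibility and invoking the multi-Toeplitz axioms \eqref{CJAx1'} and \eqref{CJAx2'} recovers $K(\s,\t)$ on the nose. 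Thus $K_{\mu_K}(\s,\t) = K(\s,\t)$ for all $\s,\t$.

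There is no substantive obstacle here: the theorem is really a packaging result, and the only point requiring care is the density step in the first direction, which is immediate from the construction of $\scrA_d$ as the operator-norm closure of $\cA_d + \cA_d^*$, together with the fact that polynomials in $L$ are dense in $\cA_d$. Once both composition identities are verified, the maps $\mu \mapsto K_\mu$ and $K \mapsto \mu_K$ are inverse to each other, establishing the claimed bijection.
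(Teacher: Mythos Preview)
Your proposal is correct and follows the paper's approach exactly: the paper's proof simply cites \Cref{lem:Correspondence2} and \Cref{lem:Correspondence1} for the two directions and leaves the bijection claim implicit. Your explicit verification that the two constructions are mutually inverse (in particular invoking the identity $\mu_K((L^\t)^*L^\s)=K(\s,\t)$ established inside the proof of \Cref{lem:Correspondence2}) is more thorough than what the paper actually writes, but there is no difference in strategy.
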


\begin{proof}
    The first claim follows from \Cref{lem:Correspondence2} and the second follows from \Cref{lem:Correspondence1}.
\end{proof}

\begin{remark}
    When \(d=1\), \eqref{eqn:Correspondence} reduces to precisely the classical formula linking a measure on the unit circle and a kernel indexed by \(\NN\), i.e.
    \[
        K(n,m) = \int_0^{2\pi} e^{i(n-m)\th} \; \rmd \mu(\th),
    \]
    since the forward shift \(S\) on \(H^2(\DD)\) satisfies \(S^*S = I_{H^2(\DD)}\).
\end{remark}

\section{Verblunsky Coefficients and Orthogonal Polynomials}

Constantinescu and Johnson's paper \cite{CJ02b} associates to a positive kernel satisfying \eqref{CJAx1} and \eqref{CJAx2} a set of \emph{Schur parameters}, \((\g_{\s,\t})_{\s,\t \in \FF_d^+}\), though we prefer the nomenclature \emph{Verblunsky coefficients} following Simon \cite{Sim05}. 

This two-parameter family of complex numbers in the unit disk is constructed via a process outlined in detail in \cite{Con96} which relies on the theory of integer-indexed kernels, which we can connect to a noncommutatively-indexed kernel via the short lexicographical ordering. It should be noted that this construction is completely different to the classical derivations of Verblunsky coefficients corresponding to a measure on the unit circle (i.e. the Schur algorithm or the Szeg\H{o} recurrences); however, when \(d = 1\) they coincide as remarked upon by Constantinescu \cite{Con96}, and as such this process is an appealing generalisation to apply to our setting.

Crucially, constructing the Verblunsky coefficients \((\g_{\s,\t})_{\s,\t\in\FF_d^+}\) of a positive kernel on \(\FF_d^+\) does not at all rely on the axioms \eqref{CJAx1} and \eqref{CJAx2}, and as such, we can derive Verblunsky coefficients for a multi-Toeplitz kernel in an identical fashion. Moreover, these Verblunsky coefficients can be reduced to a one-parameter family via (2.9) in \cite{CJ02b}:
\begin{equation}
    \label{eqn:VCReduction}
    [\g_{\s,\t}]_{\lvert \s \rvert = j, \lvert \t \rvert = k} = ([\g_{\s', \t'}]_{\lvert \s' \rvert = j-1, \lvert \t' \rvert = k-1})^{\oplus d}, \quad j, k \geq 1;
\end{equation}
using this, one can build any \(\g_{\s,\t}\) from the base, single-indexed data \(\g_{\s} := \g_{\emptyset, \s}\) (and the fact that \(\g_{\t,\s} = \overline{\g_{\s,\t}}\)). Now, \eqref{eqn:VCReduction} does rely on \eqref{CJAx1} and \eqref{CJAx2}, but we can perform the same procedure using \eqref{CJAx1'} and \eqref{CJAx2'}, so that the same notion holds for multi-Toeplitz kernels.

Combined with our correspondence in Section 2, this construction allows us to define the Verblunsky coefficients of a positive nc measure with any \(d \geq 1\):

\begin{definition}
    \label{def:VerblunskyCoefficients}
    Let \(\mu\) be an nc measure and let its corresponding multi-Toeplitz kernel be \(K_\mu\). Denote by \((\g_{\s,\t})_{\s,\t\in\FF_d^+}\) the Verblunsky coefficients arising from \(K_\mu\) by considering \(K_\mu\) as a kernel on \(\NN\) via the shortlex ordering, applying \cite[Theorem 1.5.3]{Con96}, and then pulling back again via the shortlex ordering. The \emph{Verblunsky coefficients} of \(\mu\) are the family \((\g_{\s})_{\s \in \FF_d^+} \subseteq \DD\) where \(\g_{\s} := \g_{\emptyset,\s}\) is the one-parameter family determining all of \((\g_{\s,\t})_{\s,\t\in\FF_d^+}\) via \eqref{eqn:VCReduction}.
\end{definition}

In parallel to Verblunsky coefficients, \cite{CJ02b} uses a kernel \(K\) to define an inner product on nc polynomials just as one would in the commutative univariate setting: given nc polynomials \(P(Z) = \sum_\s a_\s Z^\s\) and \(Q(Z) = \sum_\s b_\s Z^\s\), define
\[
    \langle P, Q \rangle_K := \sum_{\s,\t \in \FF_d^+} K(\s,\t) a_\s \overline{b_\t}.
\]
A quick calculation, very similar to one in our proof of \Cref{lem:Correspondence1}, shows that \(\langle P,Q\rangle_{K}\) is also equal to \(\mu_K(Q^*P)\); this is in some sense the expected inner product arising from an nc measure --- indeed, a very similar construction using linear functionals appears in the related work \cite{BMV23}. Constantinescu and Johnson \cite{CJ02b} then construct the Hilbert space \(L^2(K)\) by factoring out the subspace \(\{P \in \CC\langle Z \rangle : \langle P, P \rangle_K = 0\}\) from the space \(\CC\langle Z \rangle\) of nc polynomials in \(Z_1, \ldots, Z_d\) with complex coefficients and completing with respect to the resulting norm
\[
    \norm{P}_K = \sqrt{\langle P, P \rangle_K} = \sqrt{ \mu_K(P^*P) }.
\]
If we assume that \(\begin{bmatrix}K(\s,\t)\end{bmatrix}_{\s,\t \preceq \s_0}\) is invertible for all \(\s_0 \in \FF_d^+\), then this subspace is zero, in which case we can view nc polynomials as elements of \(L^2(K)\) directly, and the set \(\{Z^\s : \s \in \FF_d^+\}\) is linearly independent in this Hilbert space. Again we emphasise that none of this relies on axioms \eqref{CJAx1} or \eqref{CJAx2}, only positivity of \(K\).

\begin{remark}
    In this remark we motivate this assumption via the univariate setting.
    
    In the case \(d = 1\), set \(K_n := \begin{bmatrix}K(k,l)\end{bmatrix}_{k,l \leq n}\) for \(n \in \NN\). If \(K_{n_0}\) is not invertible for some \(n_0 \in \NN\), then it has a nonzero vector in its kernel, say \(x\). As noted before, when \(d = 1\) axiom \eqref{CJAx1} is simply that \(K\) is Toeplitz, so we have
    \[
        K_{n+1} = \begin{bmatrix} K_n & b \\ b^* & c \end{bmatrix}
    \]
    and by assumption \(K_{n+1}\) is positive semi-definite. Smuljan's Lemma \cite{Smu59} now implies the existence of some matrix \(W\) such that \( b = K_n W\) and \(c \succeq W^* K_n W\). Positive semi-definite matrices are in particular Hermitian, so we have
    \[
        K_{n+1} \begin{bmatrix} x \\ 0 \end{bmatrix} =  \begin{bmatrix} K_n & b \\ b^* & c \end{bmatrix}\begin{bmatrix} x \\ 0 \end{bmatrix} = \begin{bmatrix} K_n x \\ b^* x \end{bmatrix} = \begin{bmatrix} K_n x \\ W^* K_n^* x \end{bmatrix} = \begin{bmatrix} K_n x \\ W^* K_n x \end{bmatrix} = \begin{bmatrix} 0 \\ 0 \end{bmatrix}
    \]
    and hence \(K_{n+1}\) is also not invertible. Indeed, we could repeat this process indefinitely; hence once \(K_n\) is not invertible for some \(n\), it turns out that \(K_m\) is not invertible for all \(m \geq n\).
    
    In the commutative theory, this implies that the measure \(\mu_K\) on the unit circle is finitely atomic \cite[Theorem 11.3]{Sch17} (and hence singular with respect to the Lebesgue measure); moreover, finitely atomic measures have finitely many Verblunsky coefficients strictly inside the unit disc \cite[Section 1.3.6]{Sim05}. This is a situation we wish to avoid: for example, such a measure has no absolutely continuous part, so statements we are interested in (such as Szeg\H{o}'s theorem itself) are built around measures where this does not happen, and as another, if some Verblunsky coefficient has \(\lvert \g_n \rvert = 1 \), then \(\lvert \g_m \rvert = 1\) for all \(m \geq n\), so statements about the square-summability of the Verblunsky coefficients become trivial and the product in \Cref{thm:CommSzego}(iv) becomes zero. The condition that an nc measure is \emph{non-trivial} in the sense we shall define shortly, then, is a noncommutative generalisation of a measure not being finitely atomic, which avoids this less desirable setting.
\end{remark}

From here on, we shall assume that our multi-Toeplitz kernels satisfy this invertibility property, and introduce the corresponding term for nc measures, motivated by the terminology of \cite{Sim05} in the classical setting.

\begin{definition}
    We say that an nc measure \(\mu \in \ncm{d}\) is \emph{non-trivial} if its corresponding multi-Toeplitz kernel \(K_\mu\) has that \(\begin{bmatrix}K_\mu(\s,\t)\end{bmatrix}_{\s,\t \preceq \s_0}\) is invertible for all \(\s_0 \in \FF_d^+\).
\end{definition}

In the classical setting, the measure \(\mu\) must often be a probability measure. We have a corresponding notion in the free noncommutative setting.

\begin{remark}
    Let \(K\) be a multi-Toeplitz kernel. By \eqref{CJAx1'}, we have that
    \[
        K(\s, \s) = K(\emptyset, \emptyset)
    \]
    for any \(\s \in \FF_d^+\), so that multi-Toeplitz kernels are constant along the ``diagonal" of \(\FF_d^+ \times \FF_d^+\). Morever, encoded in the setting of \cite{CJ02b} is the implicit assumption that their kernels are \emph{normalised}, i.e. that \(K(\emptyset, \emptyset) = 1\) and hence \(K(\s, \s) = 1\) for any \(\s \in \FF_d^+\) --- this assumption can be seen in, e.g., the example following Theorem 3.1 of \cite{CJ02b}, and arises from the application of the Schur parameter theory of \cite{Con96}. In particular, \cite[Theorem 1.5.3]{Con96} makes this assumption, and it is with this theorem that \cite{CJ02b} associates the Verblunsky coefficients to a kernel.

    For this reason, we also assume from here on that our multi-Toeplitz kernels are normalised. Indeed, in some places, such as \cite{Pop01}, this is part of the definition of a multi-Toeplitz kernel. Taking into account our \Cref{thm:Correspondence}, this has a natural analogue in the setting of nc measures: if \(\mu_K\) is the nc measure corresponding to \(K\), then
    \[
        K(\emptyset, \emptyset) = \mu_K\left(\big(L^{\emptyset}\big)^* L^{\emptyset}\right) = \mu_K(I_{\HH^2_d}).
    \]
    Recall that when \(d = 1\), \(\mu_K\) is the integration functional on \(C(\TT)\) with respect to some measure \(\tilde{\mu}_K\), so that \(\mu(1) = \int_0^{2\pi} \, \rmd \tilde{\mu}_K\). Hence \(K\) being normalised is equivalent to \(\tilde{\mu}_K\) being a probability measure. 
\end{remark}

This motivates the next definition.

\begin{definition}
    Let \(\mu \in \ncm{d}\) be an nc measure. We say that \(\mu\) is a \emph{probability nc measure} if \(\mu(I_{\HH^2_d}) = 1\).
\end{definition}

As \(K\) is implicitly always normalised in \cite{CJ02b}, we shall assert this condition in much of the remainder of the paper.

\begin{definition}
    Let \(\mu\) be a non-trivial nc measure. The space \(L^2(\mu)\) is the Hilbert space gained by completing the space \(\CC\langle Z \rangle\) of nc polynomials with respect to the norm
    \[
        \norm{P}_\mu := \sqrt{\mu(P^*P)},
    \]
    and has inner product
    \[
        \langle P, Q \rangle_\mu := \mu(Q^* P).
    \]
\end{definition}

Next, the authors of \cite{CJ02b} go on to apply Gram-Schmidt to the (linearly independent) set of monomials \(\{Z^\s : \s \in \FF_d^+\}\) with respect to \(\langle \cdot, \cdot \rangle_K\), ordered by the short lexicographical ordering. They obtain nc polynomials \(\{\p_{\s} : \s \in \FF_d^+\}\) which are orthonormal with respect to \(\langle\cdot,\cdot\rangle_K\); much of the remainder of the paper is devoted to exploring consequences of these nc orthonormal polynomials and their relationship to the Verblunsky coefficients of \(K\). In particular, they derive recurrence relations for the orthonormal polynomials in Theorem 3.2 that compare well with the classical Szeg\H{o} recurrences, in the process acquiring a set of ``reverse polynomials", though these are defined via these recurrence relations rather than through an explicit formula, as one has in the commutative case.

An exploration of commutative Szeg\H{o} theory, as in e.g. \cite{Sim05}, quickly finds a heavy reliance both on orthonormal polynomials and on monic orthogonal polynomials, which are themselves the orthonormal polynomials rescaled to be monic. However, \cite{CJ02b} focuses on other topics, omitting any consideration of noncommutative monic orthogonal polynomials. The work already present in \cite{CJ02b} does, however, lead quickly to several observations if we \emph{do} define these polynomials.

\begin{definition}
    Given a non-trivial probability nc measure \(\mu\), we perform Gram-Schmidt in \(L^2(\mu)\) on the set \(\{Z^{\s} : \s \in \FF_d^+\}\) with respect to the shortlex ordering. Omitting normalisation associates to \(\mu\) by definition a family of \emph{orthogonal monic polynomials}, which we denote \((\P_\s)_{\s \in \FF_d^+}\), while normalising gives a family of \emph{orthonormal polynomials}, which we denote \((\p_\s)_{\s\in\FF_d^+}\).
\end{definition}

\begin{definition}
    Let \(\mu \in \ncm{d}\) be a non-trivial probability nc measure and let \((\p_\s)_{\s \in \FF_d^+}\) be the orthonormal polynomials associated to \(\mu\). For \(\s \in \FF_d^+\), the coefficients of \(\p_\s\) shall be denoted by \((a_{\s,\t})_{\emptyset \preceq \t \preceq \s}\) so that we may write
    \[
        \p_\s = \sum_{\emptyset \preceq \t \preceq \s} a_{\s,\t} Z^{\t}.
    \]
\end{definition}

\begin{remark}
    By considering the normalisation at each stage of Gram-Schmidt, the monic orthogonal polynomials are related to the orthonormal polynomials via \(\P_\s = \frac{1}{a_{\s,\s}}\p_\s\).
\end{remark} 

This observation allows us to note a number of facts about these polynomials. Firstly, since \(\p_\s\) has \(\mu\)-norm 1, the \(\mu\)-norm of \(\P_\s\) is \(\frac{1}{a_{\s,\s}}\). We then have by \cite[Theorem 3.1(2)]{CJ02b} that (for \(\s = \s_1 \cdots \s_k\))
\[
    \norm{\P_\s}^2_\mu = \frac{1}{a_{\s,\s}^2} = \prod_{1 \leq j \leq k} (1 - \lvert \g_{\s_j \cdots \s_k} \rvert^2).
\]
An equality just like this appears in \cite{Sim05} for the norm of the monic orthogonal polynomials. However, a subtle point to note is that in one variable, this product is in fact over \emph{all} Verblunsky coefficients up to a certain index, whereas for us, this product is over only a particular subset of the Verblunsky coefficients preceding \(\s\).

Perhaps more importantly, we can use the recurrence relations for \((\p_\s)_{\s \in \FF_d^+}\) of \cite[Theorem 3.2]{CJ02b}, i.e.
\begin{equation}
    \label{eqn:recNormal}
    \p_{k\s} = \frac{1}{\sqrt{1 - \lvert \g_{k\s}} \rvert^2}(Z_k\p_\s - \g_{k\s} \p_{k\s - 1}^{\#})
\end{equation}
and
\begin{equation}
    \label{eqn:recReverseNormal}
    \p_{k\s}^{\#} = \frac{1}{\sqrt{1 - \lvert \g_{k\s}\rvert^2}}(-\overline{\g_{k\s}} Z_k \p_\s + \p_{k\s-1}^{\#}),
\end{equation}
to derive corresponding ones for the monic polynomials.

\begin{remark}
    These recurrences reduce to the classical relations one finds in \cite{Sim05} in the case \(d = 1\) due to the fact that \(k\s - 1 = \s\) in this case. However, there is a subtlety when \(d > 1\) not present in the \(d = 1\) setting: as \(k\s - 1\) no longer equals \(\s\) in general, on the right hand side of both \eqref{eqn:recNormal} and \eqref{eqn:recReverseNormal} we note that the index of the orthonormal polynomial differs from the index of the reverse polynomial, which may in some sense be surprising from the univariate theory.
\end{remark}

\begin{remark}
    It should be noted that we have some freedom to make a definition here: as the reverse polynomials of \cite{CJ02b} are defined via recurrence relations rather than a general direct formula as we have in the classical theory, the noncommutative reverse polynomials of the monic orthogonal polynomials are not immediately determined by those of the orthonormal polynomials. The most natural thing to do would be to follow the behaviour observed classically, that is, if \(\p_n(z) = \sum_j a_j z^j\) then \(\P^*_n(z) = \frac{1}{a_n}\p^*_n(z)\). Making a definition analogous to this phenomenon, we can obtain recurrence relations for these orthogonal polynomials from those of the orthonormal ones.
\end{remark}

\begin{definition}
    \label{def:reversePolynomials}
    Let \(\mu\) be a non-trivial probability nc measure with monic orthogonal polynomials \((\P_\s)_{\s \in \FF_d^+}\) and orthonormal polynomials \((\p_\s)_{\s \in \FF_d^+}\) and let the leading coefficient of \(\p_\s\) be \(a_{\s,\s}\). The \emph{reverse polynomials} of the orthonormal polynomials are the polynomials \((\p_\s^{\#})_{\s\in\FF_d^+}\) appearing in \eqref{eqn:recNormal} and \eqref{eqn:recReverseNormal}, and for \(\s \in \FF_d^+\), we define the \emph{reverse polynomial} of \(\P_\s\) to be \(\P_\s^{\#} := \frac{1}{a_{\s,\s}}\p_\s^\#\). 
\end{definition}

\begin{theorem}
    \label{thm:NCRecurrences}
    Let \(\mu\) be a non-trivial probability nc measure with associated monic orthogonal polynomials \((\P_\s)_{\s \in \FF_d^+}\). We have the following analogue of the Szeg\H{o} recurrence relations:
    \begin{equation}
        \label{eqn:recMonic}
        \P_{k\s} = Z_k \P_\s - \g_{k\s}\frac{a_{k\s -1, k\s -1}}{a_{\s,\s}} \P^\#_{k\s - 1}
    \end{equation}
    and
    \begin{equation}
        \label{eqn:recReverseMonic}
        \P^\#_{k\s} = -\overline{\g_{k\s}} Z_k \P_\s + \frac{a_{k\s -1, k\s -1}}{a_{\s,\s}}  \P^\#_{k\s - 1}.
    \end{equation}
\end{theorem}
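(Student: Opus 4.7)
The plan is to derive the monic recurrences directly from the orthonormal recurrences \eqref{eqn:recNormal} and \eqref{eqn:recReverseNormal} by substituting the rescalings $\p_\s = a_{\s,\s}\P_\s$ and $\p_\s^\# = a_{\s,\s}\P_\s^\#$ (the latter holds by \Cref{def:reversePolynomials}). Provided one can cancel the factor of $\sqrt{1 - |\g_{k\s}|^2}$ appearing on the right hand side of \eqref{eqn:recNormal} and \eqref{eqn:recReverseNormal} against an appropriate ratio of leading coefficients, the desired identities \eqref{eqn:recMonic} and \eqref{eqn:recReverseMonic} will drop out.

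The preparatory step is to establish the identity
\[
\frac{a_{\s,\s}}{a_{k\s,k\s}} = \sqrt{1 - |\g_{k\s}|^2}.
\]
For this, I would use the product formula \(\norm{\P_\s}_\mu^2 = a_{\s,\s}^{-2} = \prod_{j=1}^{n}(1 - |\g_{\s_j\cdots\s_n}|^2)\) recalled just above from \cite[Theorem 3.1(2)]{CJ02b}, applied to both \(\s = \s_1\cdots\s_n\) and to \(k\s = k\s_1\cdots\s_n\). The two products differ only by the single extra factor corresponding to \(j=1\), which is precisely \(1 - |\g_{k\s}|^2\); positivity of the leading coefficients (which one obtains from the Gram--Schmidt procedure generating \((\p_\s)_{\s \in \FF_d^+}\)) then lets one take the positive square root.

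With this in hand, I would substitute \(\p_{k\s} = a_{k\s,k\s}\P_{k\s}\), \(\p_\s = a_{\s,\s}\P_\s\), and \(\p^\#_{k\s-1} = a_{k\s-1,k\s-1}\P^\#_{k\s-1}\) into \eqref{eqn:recNormal}, which gives
\[
a_{k\s,k\s}\P_{k\s} = \frac{1}{\sqrt{1 - |\g_{k\s}|^2}}\left(a_{\s,\s} Z_k \P_\s - \g_{k\s}\, a_{k\s-1,k\s-1}\P^\#_{k\s-1}\right).
\]
Dividing through by \(a_{k\s,k\s}\) and applying the identity of the previous paragraph absorbs the factor \(\frac{a_{\s,\s}}{a_{k\s,k\s}\sqrt{1-|\g_{k\s}|^2}} = 1\), producing \eqref{eqn:recMonic}. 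An entirely parallel computation starting from \eqref{eqn:recReverseNormal} produces \eqref{eqn:recReverseMonic}.

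There is no real obstacle --- the argument is bookkeeping --- but the one point requiring care, highlighted in the remark preceding the theorem, is that when \(d > 1\) we have \(k\s - 1 \neq \s\) in general. Consequently the ratio \(\frac{a_{k\s-1,k\s-1}}{a_{\s,\s}}\) does not simplify further and must be retained on the right hand side; this is the multivariate manifestation of the classical Szeg\H{o} recurrence and is why one cannot reduce to a strictly $\s$-indexed formula as in the $d=1$ setting.
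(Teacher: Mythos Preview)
Your proposal is correct and is essentially identical to the paper's own proof: both establish the identity \(a_{k\s,k\s} = a_{\s,\s}/\sqrt{1-|\g_{k\s}|^2}\) from the product formula of \cite[Theorem 3.1]{CJ02b}, then substitute \(\p_\t = a_{\t,\t}\P_\t\) and \(\p_\t^\# = a_{\t,\t}\P_\t^\#\) into \eqref{eqn:recNormal} and \eqref{eqn:recReverseNormal} and simplify. Your remark about the non-simplification of \(a_{k\s-1,k\s-1}/a_{\s,\s}\) when \(d>1\) also matches the paper's subsequent commentary.
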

\begin{proof}
    This follows readily from substitution of our definitions into the recurrence relations \eqref{eqn:recNormal} and \eqref{eqn:recReverseNormal} and some algebraic observations. Recall from an equation in the proof of \cite[Theorem 3.1]{CJ02b} that for \(\s = \s_1\cdots\s_N \in \FF_d^+, k = 1, \ldots, d\) we have
    \[
        a_{\s,\s} = \prod_{1 \leq j \leq N} (1 - \lvert \g_{\s_j \cdots \s_N} \rvert^2)^{-\frac{1}{2}};
    \]
    for \(\t = k\s\) we can write \(\t = \t_1 \t_2 \cdots \t_{N+1} = k \s_1 \cdots \s_N\), so that
    \begin{align*}
        a_{k\s,k\s} = a_{\t,\t} & = \prod_{1 \leq j \leq N+1} (1 - \lvert \g_{\t_j\cdots \t_{N+1}} \rvert^2)^{-\frac{1}{2}}\\
        & = (1 - \lvert \g_{k\s} \rvert^2)^{-\frac{1}{2}} \prod_{2 \leq j \leq N+1} (1 - \lvert \g_{\t_j \cdots \t_{N+1}} \rvert^2)^{-\frac{1}{2}} \\
        & = \frac{1}{d_{k\s}} \prod_{1 \leq j \leq N} (1 - \lvert \g_{\s_j \cdots \s_{N}} \rvert^2)^{-\frac{1}{2}}\\
        & = \frac{a_{\s,\s}}{d_{k\s}}.
    \end{align*}
    Now, substituting \(\P_\s = a_{\s,\s} \p_\s\) and \(\P_\s^{\#} = a_{\s,\s} \p_\s^{\#}\) into the Szeg\H{o} recurrences for the orthonormal polynomials, we obtain from \eqref{eqn:recNormal} that
    \[
        d_{k\s}a_{k\s,k\s} \P_{k\s} = a_{\s,\s} Z_k \P_s - \g_{k\s} a_{k\s-1,k\s-1} \P_{k\s-1}^{\#}
    \]
    and with the above computation it follows that
    \[
        \P_{k\s} = Z_k \P_\s - \g_{k\s}\frac{a_{k\s -1, k\s -1}}{a_{\s,\s}} \P^\#_{k\s - 1}.
    \]
    Similarly, \eqref{eqn:recReverseNormal} becomes
    \[
        d_{k\s} a_{k\s,k\s} \P_{k\s}^{\#} = -\overline{\g_{k\s}} a_{\s,\s} Z_k \P_\s + a_{k\s-1,k\s-1}\P_{k\s-1}^{\#}
    \]
    which reveals the identity
    \[
        \P^\#_{k\s} = -\overline{\g_{k\s}} Z_k \P_\s + \frac{a_{k\s -1, k\s -1}}{a_{\s,\s}}  \P^\#_{k\s - 1}.
    \]
\end{proof}

\begin{remark} 
    The factor of \(\frac{a_{k\s -1, k\s -1}}{a_{\s,\s}}\) appearing in \eqref{eqn:recMonic} and \eqref{eqn:recReverseMonic} is trivially always 1 when \(d = 1\), and in that case these compare precisely to the Szeg\H{o} recurrence relations (see \cite{Sim05} with a relabelling of the Verblunsky coefficients).
\end{remark}

\begin{remark}
    The structure of the recurrence relations \eqref{eqn:recNormal} and \eqref{eqn:recReverseNormal} (and similarly \eqref{eqn:recMonic} and \eqref{eqn:recReverseMonic}) are twofold. Firstly, for \(\s \in \FF_d^+\) we obtain a relationship between \(\p_{\s}\) and \(\p_{\s+1}\), i.e. between immediately shortlex-successive orthonormal polynomials, and secondly, we see a relationship between \(\p_\s\) and \(\p_{k\s}\) for each \(k = 1, \ldots, d\). In this way these recurrences encode a relationship that intertwines the monoidal and order structures on \(\FF_d^+\).
\end{remark}

What we now have is a way to associate Verblunsky coefficients and families of both monic orthogonal and orthonormal polynomials to any non-trivial probability nc measure, up to and including recurrence relations for both families of polynomials. As these polynomials are used extensively in the classical theory, this puts us in a strong position to explore analogues of those classical results in \(d\) noncommuting variables.

Finally, in the commutative theory, Verblunsky's theorem (or Favard's theorem on the unit circle) provides the converse of this statement: given a candidate sequence \((\eta_n)_{n=0}^{\infty}\) in the unit disc, there exists a measure \(\mu\) on the unit circle whose Verblunsky coefficients are \(\g_n = \eta_n, n \in \NN\), and when all of these lie strictly inside the disc, \(\mu\) is unique. In \cite[Theorem 5.1]{CJ02b} we see the equivalent result for our setting, which we restate here in the language of nc measures.

\begin{theorem}
    \label{thm:NCVerblunsky}
    Let \((\g_\s)_{\s \in \FF_d^+}\) be a family of complex numbers with \(\g_\emptyset = 0\) and \(\lvert \g_\s \rvert < 1\) for \(\s \in \FF_d^+ \setminus \{\emptyset\}\). Then there exists a unique non-trivial probability nc measure \(\mu\) such that the polynomials \((\p_\s)_{\s\in\FF_d^+}\) defined by the recurrences \eqref{eqn:recNormal} and \eqref{eqn:recReverseNormal} with \(\p_\emptyset = \p_\emptyset^{\#} = 1\) are orthogonal with respect to the inner product induced by \(\mu\).
\end{theorem}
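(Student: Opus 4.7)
The plan is to reverse engineer the construction of Verblunsky coefficients from a multi-Toeplitz kernel: given a family $(\g_\s)_{\s \in \FF_d^+}$ satisfying the hypotheses, I would first construct a multi-Toeplitz kernel $K$ on $\FF_d^+$ whose Verblunsky coefficients (in the sense of \Cref{def:VerblunskyCoefficients}) are exactly $(\g_\s)$, and then apply \Cref{thm:Correspondence} to translate $K$ into the desired nc measure $\mu \in \ncm{d}$. This inverse construction is what \cite[Theorem 5.1]{CJ02b} accomplishes for the class of kernels satisfying \eqref{CJAx1} and \eqref{CJAx2}; as discussed following \eqref{eqn:VCReduction}, the construction of Schur parameters and the reduction to a one-parameter family go through unchanged if \eqref{CJAx1} is replaced by \eqref{CJAx1'} and \eqref{CJAx2} by \eqref{CJAx2'}, so running that procedure in our setting will produce a multi-Toeplitz kernel $K$ with $K(\emptyset, \emptyset) = 1$ and Verblunsky coefficients $(\g_\s)$.

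Once $K$ is in hand, I would apply \Cref{thm:Correspondence} to produce $\mu = \mu_K \in \ncm{d}$. The normalisation $\mu(I_{\HH^2_d}) = K(\emptyset,\emptyset) = 1$ shows that $\mu$ is a probability nc measure, while non-triviality will follow from the strict inequalities $\lvert \g_\s \rvert < 1$: the defects $d_\s = \sqrt{1 - \lvert \g_\s \rvert^2}$ arise as leading coefficients of the orthonormal polynomials via the formula $a_{\s,\s} = \prod_{1 \leq j \leq N}d_{\s_j\cdots\s_N}^{-1}$ appearing in the proof of \Cref{thm:NCRecurrences}, and the Gram-Schmidt construction of these polynomials is only valid if every submatrix $K_{\s_0}$ is invertible, which the strict inequalities guarantee. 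The orthonormal polynomials obtained from Gram-Schmidt on $\{Z^\s : \s \in \FF_d^+\}$ in $L^2(\mu)$ will then automatically satisfy the recurrences \eqref{eqn:recNormal} and \eqref{eqn:recReverseNormal} with the prescribed Verblunsky coefficients, by \cite[Theorem 3.2]{CJ02b}, and the initial conditions $\p_\emptyset = \p_\emptyset^{\#} = 1$ fall out from $K(\emptyset,\emptyset) = 1$ and the base case of Gram-Schmidt.

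For uniqueness, suppose $\mu, \mu' \in \ncm{d}$ are two non-trivial probability nc measures meeting the hypothesis. The polynomials $(\p_\s)_{\s \in \FF_d^+}$ are determined entirely by the recurrences and initial conditions, so they agree for $\mu$ and $\mu'$; since $(\p_\s)$ is a linear basis for $\CC\langle Z\rangle$, each monomial $Z^\s$ may be expanded as a $\CC$-linear combination of $(\p_\t)_{\emptyset \preceq \t \preceq \s}$, and consequently the inner products $\langle Z^\s, Z^\t\rangle_\mu$ and $\langle Z^\s, Z^\t\rangle_{\mu'}$ coincide for every $\s,\t \in \FF_d^+$. Thus $K_\mu = K_{\mu'}$, and \Cref{thm:Correspondence} forces $\mu = \mu'$. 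The main obstacle I anticipate is the careful verification that \cite[Theorem 5.1]{CJ02b} adapts to our twisted axiom \eqref{CJAx2'}: the Schur-parameter reconstruction of a positive kernel depends on where the null entries from \eqref{CJAx2} sit relative to the shortlex ordering, and checking that the same Cholesky-type recursion builds a kernel satisfying \eqref{CJAx2'} rather than \eqref{CJAx2} is where the novelty — and the bulk of the work — resides.
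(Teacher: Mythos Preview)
Your proposal is correct and is essentially the paper's approach: the paper does not supply an independent proof but simply restates \cite[Theorem 5.1]{CJ02b} in the language of nc measures via the correspondence of \Cref{thm:Correspondence}, having already remarked (following \eqref{eqn:VCReduction} and before \Cref{def:multiToeplitzKernel}) that the Schur-parameter machinery of \cite{CJ02b} carries over to the twisted axiom \eqref{CJAx2'}. Your outline is a faithful unpacking of precisely that translation, and the obstacle you flag is the one the paper addresses informally rather than in detail.
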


This combined with \Cref{thm:NCRecurrences} provides a full correspondence between noncommutatively-indexed families of complex numbers inside the unit disc on the one side and non-trivial probability nc measures on the other, directly analogous to the commutative theory.

\section{The NC Christoffel Function}

In the univariate setting, a central role is played by the Christoffel-Darboux kernel, and accordingly the Christoffel function associated to a measure. In this section we use the theory of orthogonal polynomials to introduce a noncommutative analogue that will play a similarly central role to our setting.

\begin{definition}
    \label{def:ncCDK}
    Let \(\mu\) be a non-trivial probability nc measure with orthonormal polynomials \((\p_\s)_{\s \in \FF_d^+}\). The \(n^{\mathrm{th}}\) \emph{nc Christoffel-Darboux kernel} associated to \(\mu\) is the map sending \(k\times k\) matrices \(A,B\) to
    \begin{equation}
        \label{eqn:CDKernel}
        \k_{\mu, n}(A,B) := \sum_{\s = \emptyset}^{\s(n)} \p_\s(A)\p_\s(B)^* = I_k + \sum_{\emptyset \prec \s \preceq \s(n)}  \p_\s(A)\p_\s(B)^*
    \end{equation}
    and the \(n^{\mathrm{th}}\) \emph{nc Christoffel approximate} is
    \[
        \L_{n}(A; \mu) := \k_{\mu, n}(A,A)^{-1}.
    \]
\end{definition}

We use the same Christoffel-Darboux kernels as \cite{CJ02b}, whence we see an alternate formula analogous to the Christoffel-Darboux formula \cite[Theorem 4.2]{CJ02b}.

\begin{remark} 
    A very similar construction appears in \cite{BMV23}, but our setting requires some differences to this construction as \cite{BMV23} imposes that the generators \(X_1, \ldots, X_d\) be self-adjoint, which is not a property we impose for polynomials in the disk system.
\end{remark}

\begin{remark}
    \label{rem:BMV}
    Classically, the Christoffel approximates \(\l_n(\cdot; \rmd \mu)\) are minima of a certain integral quantity over polynomials of degree up to \(n\) which map the origin to 1, and it is a theorem that these minima are equal to the inverse of a sum of the form \eqref{eqn:CDKernel}. We next prove an analogue of this result in the noncommutative setting, though for convenience we proceed in the opposite direction, showing that our \(\L_{n}(\cdot; \mu)\) defined in terms of orthogonal polynomials is in fact equal to a certain minimum. We proceed by mimicking the proof of \cite[Theorem 3.4]{BMV23}, which is a very similar result, but our setting has some minor differences that require slightly different computations --- for example, while the involution \(\star\) of \cite{BMV23} appears on the right in \(PP^{\star}\), the structure of the \(d\)-shift on \(\HH^2_d\) encourages us to consider quantities with an adjoint on the left in \(Q^*Q\); this allows us to bring the machinery of Jury and Martin to bear. Nevertheless, the spirit of the proof in \cite{BMV23} carries through.
\end{remark}

In \cite[Section 3]{BMV23}, two tensor products between complex algebras are described. Given algebras \(\cV, \cW\) over \(\CC\), one has a choice of two product operations on the algebraic tensor product \(\cV \otimes \cW\): either \(\cV \otimes \cW\), the algebra with product
\begin{equation}
    \label{eqn:FirstTensorProduct}
    \left(\sum_i v_i \otimes w_i\right)\left(\sum_i \tilde{v_i} \otimes \tilde{w_i}\right) = \sum_{i,j} v_i \tilde{v_j} \otimes w_i \tilde{w_j}
\end{equation}
or \(\cV \otimes \cW^{\mathrm{op}}\), the algebra with product
\begin{equation}
    \label{eqn:SecondTensorProduct}
    \left(\sum_i v_i \otimes w_i\right)\left(\sum_i \tilde{v_i} \otimes \tilde{w_i}\right) = \sum_{i,j} v_i \tilde{v_j} \otimes \tilde{w_j} w_i.
\end{equation}

Belinschi, Magron and Vinnikov consider \((\CC\langle X \rangle)^{k \times k} \cong \CC^{k \times k} \otimes \CC\langle X \rangle\) with the \emph{opposite product} \eqref{eqn:SecondTensorProduct}, as this is of interest in the study of operator spaces and has applications down the line in their work. This interest arises as one can identify \(P(Z) = \sum_{\s} c_\s \otimes Z^{\s} \in (\CC\langle X \rangle)^{k\times k}\) as taking values in linear maps on matrices: for a tuple of \(k\times k\) matrices \(A\), \(P(A) = \sum_\s c_\s \otimes A^{\s}\) sends \(C \in \CC^{k\times k}\) to \(\sum_\s c_\s C A^{\s}\), and the opposite product on the tensor product corresponds to the usual composition of these operators. See the in-depth discussion at the beginning of \cite[Section 3.2]{BMV23} for further details.

On the other hand, we shall choose the tensor product algebra structure \((\CC\langle Z \rangle)^{k\times k} \cong \CC \langle Z \rangle \otimes \CC^{k\times k}\) with the product \eqref{eqn:FirstTensorProduct}, as this structure will allow us to have the adjoint on the left as discussed in \Cref{rem:BMV}. In this algebra, one may still consider \(P(A) = \sum_\s c_\s \otimes A^{\s}\) as a linear map on \(\CC^{k\times k}\), however, this embedding of \(\CC^{k\times k} \otimes \CC^{k\times k} \hookrightarrow B(\CC^{k\times k}, \CC^{k\times k})\) is no longer a homomorphism as the product \eqref{eqn:FirstTensorProduct} no longer corresponds to the composition of the relevant linear operators. This will suffice for our setting as we need not evaluate a product of two matrix-valued nc polynomials as a linear map.

Using this choice of product, one may work with a polynomial expressed in terms of the orthonormal polynomials \((\p_\s)_{\s\in\FF_d+}\) directly, rather than in terms of their adjoints --- this is beneficial to us as, unlike the setting of \cite{BMV23} where \(P^{\star}\) is a polynomial when \(P\) is, the adjoint of \(\p_\s\) is no longer itself a polynomial.

\begin{remark}
    In the sequel, we in fact only need to evaluate the linear operator \(P(A)\), where \(P \in (\CC\langle Z \rangle)^{k\times k}\) and \(A \in \CC^{k\times k}\), on the \(k\times k\) identity matrix \(I_k\). For this reason, we shall suppress this secondary dependence, so that for us \(P(A)\) shall mean not the linear operator, but rather the matrix-valued quantity \(P(A)[I_k]\) in the notation of \cite{BMV23}.
\end{remark}

\begin{lemma}
    \label{lem:ChristoffelFunction}
    Let \(\mu\) be a non-trivial probability nc measure with Christoffel approximates \((\L_n(\cdot; \mu))_{n\in\NN}\). For \(n \in \NN\) and \(A \in (\CC^{k\times k})^d\), we have
    \[
        \L_{n}(A; \mu) = \min\{(\mu \otimes \mathrm{Id}_k)(Q^*Q) \; : \; Q \in (\CC\langle Z \rangle_{\s(n)})^{k\times k}, \; Q(A) = I_k\}.
    \]
\end{lemma}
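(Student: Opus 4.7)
The plan is to expand the candidate matrix-valued polynomial $Q$ in the basis of orthonormal polynomials $\{\p_\s : \s \preceq \s(n)\}$, use orthonormality to reduce the objective to a sum of squares of the matrix coefficients, rewrite the interpolation constraint $Q(A) = I_k$ as a finite linear matrix equation, and then solve the resulting matrix-valued least-squares problem by completing the square in the Loewner order.

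First, any $Q \in (\CC\langle Z \rangle_{\s(n)})^{k \times k}$ admits a unique expansion $Q = \sum_{\s \preceq \s(n)} \p_\s \otimes C_\s$ with $C_\s \in \CC^{k \times k}$, since $\{\p_\s : \s \preceq \s(n)\}$ and $\{Z^\s : \s \preceq \s(n)\}$ span the same subspace of $\CC\langle Z \rangle$. Using the product convention \eqref{eqn:FirstTensorProduct} together with $\mu(\p_\s^* \p_\t) = \langle \p_\t, \p_\s \rangle_\mu = \d_{\s,\t}$, I compute
\[
    (\mu \otimes \id_k)(Q^* Q) = \sum_{\s, \t \preceq \s(n)} \mu(\p_\s^* \p_\t) \, C_\s^* C_\t = \sum_{\s \preceq \s(n)} C_\s^* C_\s.
\]
Simultaneously, the constraint $Q(A) = I_k$ becomes the linear matrix equation $\sum_{\s \preceq \s(n)} \p_\s(A) \, C_\s = I_k$.

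Next I exhibit an explicit minimiser. Since $\p_\emptyset = 1$, the matrix $\k_{\mu, n}(A, A) = \sum_{\s \preceq \s(n)} \p_\s(A) \p_\s(A)^*$ dominates $I_k$, so is positive definite and invertible. Define $C_\s^\circ := \p_\s(A)^* \k_{\mu, n}(A, A)^{-1}$; then $\sum_\s \p_\s(A) C_\s^\circ = \k_{\mu, n}(A, A) \k_{\mu, n}(A, A)^{-1} = I_k$, so $(C_\s^\circ)$ is feasible with objective value $\sum_\s (C_\s^\circ)^* C_\s^\circ = \k_{\mu, n}(A, A)^{-1} = \L_n(A; \mu)$. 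For any other feasible $(C_\s)$, put $R_\s := C_\s - C_\s^\circ$, so that $\sum_\s \p_\s(A) R_\s = 0$; expanding yields
\[
    \sum_\s C_\s^* C_\s = \L_n(A; \mu) + \sum_\s R_\s^* R_\s + \sum_\s (C_\s^\circ)^* R_\s + \sum_\s R_\s^* C_\s^\circ,
\]
where the two cross terms vanish because $\sum_\s (C_\s^\circ)^* R_\s = \k_{\mu, n}(A, A)^{-1} \sum_\s \p_\s(A) R_\s = 0$ and similarly for its adjoint. Hence $(\mu \otimes \id_k)(Q^* Q) \succeq \L_n(A; \mu)$ in the Loewner order, with equality if and only if $R_\s = 0$ for all $\s$, which simultaneously establishes the minimum and exhibits the unique minimiser.

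The main delicacy will be bookkeeping inside the tensor algebra: one has to verify that the product convention \eqref{eqn:FirstTensorProduct} interacts correctly with the adjoint $(\p_\s \otimes C_\s)^* = \p_\s^* \otimes C_\s^*$ (noting that $\p_\s^*$ is not itself in $\CC\langle Z \rangle$, but lives in the nc disc system $\scrA_d$ on which $\mu$ is defined), with the evaluation $Q(A) = Q(A)[I_k]$ in the sense of \cite{BMV23}, and with the slice map $\mu \otimes \id_k$. Once those conventions are pinned down, the argument closely mirrors the matrix Christoffel-function proof of \cite[Theorem 3.4]{BMV23}, adapted to the tensor-product conventions used here.
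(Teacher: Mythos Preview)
Your argument is correct and shares its overall architecture with the paper's proof: both expand $Q$ in the orthonormal basis $\{\p_\s\}$, reduce the objective to $\sum_\s C_\s^* C_\s$ via orthonormality, and exhibit the same explicit feasible point $C_\s^\circ = \p_\s(A)^*\k_{\mu,n}(A,A)^{-1}$ attaining $\L_n(A;\mu)$. The difference lies in the lower bound. You complete the square: writing $R_\s = C_\s - C_\s^\circ$, the constraint forces $\sum_\s \p_\s(A) R_\s = 0$, which kills the cross terms and leaves $\sum_\s C_\s^* C_\s = \L_n(A;\mu) + \sum_\s R_\s^* R_\s \succeq \L_n(A;\mu)$. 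The paper instead invokes a Hilbert-module Cauchy--Schwarz inequality (via \cite[Theorem 14.10]{Pau02}) to obtain positivity of the $2\times 2$ block matrix $\begin{bmatrix}\k_{\mu,n}(A,A) & I_k \\ I_k & \sum_\s C_\s^* C_\s\end{bmatrix}$, and then reads off the same inequality from the Schur complement. Your route is more elementary and self-contained, and it delivers uniqueness of the minimiser as a by-product; the paper's route packages the estimate as an operator Cauchy--Schwarz, which is perhaps more conceptual but requires the external reference.
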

\begin{proof}
    For \(\s \in \FF_d^+\) and \(n \in \NN\), let \(c_\s(A) = \p_\s(A)^*\L_{n}(A; \mu)\) and define the \(k\times k\) matrix of nc polynomials \(Q_n\) by
    \[
        Q_n := \sum_{\s = \emptyset}^{\s(n)} \p_\s \otimes c_\s(A).
    \]
    Note that, unlike the corresponding step in \cite{BMV23}, this is expressed in terms of the basis \((\p_\s)_{\s\in\FF_d^+}\), rather than in terms of \((\p_\s^*)_{\s\in\FF_d^+}\) in analogy to \cite[Theorem 3.4]{BMV23}: this is because while \((P^{\star}_\s)_{\s\in\FF_d^+}\) is also a basis of \(\CC\langle X \rangle\) in their setting, \(\p_\s^*\) is not even a polynomial in \(Z\) for us.

    Now,
    \begin{align*}
        Q_n(A) & = \sum_{\s = \emptyset}^{\s(n)} \p_\s(A) c_\s(A) = \sum_{\s = \emptyset}^{\s(n)} \p_\s(A) \p_\s(A)^*\L_{n}(A; \mu) \\
        & = \L_{n}(A; \mu)^{-1} \L_{n}(A; \mu) \\
        & = I_k,
    \end{align*}
    so that \(Q_n\) is admissible for the minimum of interest. Next, we see from our choice of the tensor structure \eqref{eqn:FirstTensorProduct} that
    \[
        Q_n^* Q_n = \sum_{\s,\t = \emptyset}^{\s(n)} \p_\s^*\p_\t \otimes c_\s^*(A)c_\t(A)
    \]
    and subsequently we calculate that
    \begin{align*}
        (\mu \otimes \mathrm{Id}_k)(Q_n^* Q_n) & = \sum_{\s,\t = \emptyset}^{\s(n)} \mu(\p_\s^* \p_\t)c_\s(A)^*c_\t(A) \\
        & = \sum_{\s,\t = \emptyset}^{\s(n)} \langle \p_\t, \p_\s \rangle_\mu c_\s(A)^*c_\t(A) \\
        & = \sum_{\s,\t = \emptyset}^{\s(n)} \d_{\s,\t} c_\s(A)^* c_\t(A) \\
        & = \sum_{\s = \emptyset}^{\s(n)} c_\s(A)^* c_\s(A) \\
        & = \sum_{\s = \emptyset}^{\s(n)} \big( \L_{n}(A; \mu)^* \p_\s(A)\big) \big( \p_\s(A)^*\L_{n}(A; \mu)\big) \\
        & = \L_{n}(A; \mu) \left(\sum_{\s = \emptyset}^{\s(n)} \p_\s(A)\p_\s(A)^*\right) \L_{n}(A; \mu)^* \\
        & = I_k \cdot \L_{n}(A; \mu)^* \\
        & = \left(\left(\sum_{\s = \emptyset}^{\s(n)} \p_\s(A)\p_\s(A)^*\right)^{-1}\right)^* \\
        & = \left(\left(\sum_{\s = \emptyset}^{\s(n)} \p_\s(A)\p_\s(A)^*\right)^{*}\right)^{-1} \\
        & = \left(\sum_{\s = \emptyset}^{\s(n)} \p_\s(A)\p_\s(A)^*\right)^{-1} \\
        & = \L_{n}(A; \mu).
    \end{align*}
    Hence \(\L_{n}(A; \mu)\) is in the set we want to minimise over.

    Now let \(P = \sum_{\s = \emptyset}^{\s(n)} \p_\s \otimes a_\s \in (\CC\langle Z \rangle_{\s(n)})^{k\times k}\) be arbitrary and suppose that \(P(A) = I_k\), i.e. that \(\sum_{\s = \emptyset}^{\s(n)} \p_\s(A) a_\s = I_k\). In terms of the natural Hilbert module structure over \(\CC\langle Z \rangle\), this means precisely that \(\langle(\p_\s(A))_{\s = \emptyset}^{\s(n)}, (a^*_\s)_{\s = \emptyset}^{\s(n)}\rangle = I_k\). Considering the matrix
    \[
        X = \begin{bmatrix}\p_{\emptyset}(A) & \cdots & \p_{\s(n)}(A) \\ a_{\emptyset}^* & \cdots & a_{\s(n)}^* \end{bmatrix}
    \]
    and applying \cite[Theorem 14.10]{Pau02}, we obtain positivity of the matrix 
    \[
        Y = \begin{bmatrix} \sum_{\s = \emptyset}^{\s(n)} \langle\p_\s(A), \p_\s(A)\rangle & \sum_{\s = \emptyset}^{\s(n)} \langle \p_\s(A), a_\s^*\rangle \\ \sum_{\s = \emptyset}^{\s(n)} \langle a_\s^*, \p_\s(A) \rangle & \sum_{\s = \emptyset}^{\s(n)} \langle a_\s^*, a_\s^* \rangle \end{bmatrix} = \begin{bmatrix} \sum_{\s = \emptyset}^{\s(n)} \p_\s(A) \p_\s(A)^* & \sum_{\s = \emptyset}^{\s(n)} \p_\s(A) a_\s \\ \sum_{\s = \emptyset}^{\s(n)} a_\s^* \p_\s(A)^* & \sum_{\s = \emptyset}^{\s(n)} a_\s^* a_\s \end{bmatrix},
    \]
    and on the other hand, \(P(A) = I_k\) implies that
    \[
        Y = \begin{bmatrix}\L_{n}(A; \mu)^{-1} & I_k \\ I_k & \sum_{\s = \emptyset}^{\s(n)} a_\s^* a_\s \end{bmatrix}.
    \]
    We note that
    \[
        \L_n(A; \mu)^{-1} = \k_{\mu, n}(A,A) = I_k + \sum_{\emptyset \prec \s \preceq \s(n)}  \p_\s(A)\p_\s(A)^*,
    \]
    which is evidently positive definite. Via the Schur complement (e.g. see \cite[Remark 2.4.6]{BW11}), then, \(Y\) is positive semi-definite if and only if
    \[
        \sum_{\s = \emptyset}^{\s(n)} a_\s^* a_\s - I_k^* \left(\L_{n}(A; \mu)^{-1}\right)^{-1}I_k \succeq 0,
    \]
    or equivalently,
    \[
        \L_n(A; \mu) \preceq \sum_{\s = \emptyset}^{\s(n)} a_\s^* a_\s.
    \]
    
    Finally, note that
    \[
        (\mu \otimes \mathrm{Id}_k)(P^*P) = \sum_{\s = \emptyset}^{\s(n)} a_\s^* a_\s.
    \]
    As a result we now see that, for any \(P \in (\CC\langle Z \rangle_{\s(n)})^{k\times k}\), it holds that
    \[
        \L_{n}(A; \mu) \preceq (\mu \otimes \mathrm{Id}_k)(P^*P),
    \]
    and so \(\L_{n}(A; \mu)\) is the minimum claimed.    
\end{proof}

\begin{remark}
    \label{rem:Minimiser}
    The proof of the preceding lemma is constructive, that is, not only do we show that \(\L_n(A; \mu)\) is the minimum claimed, but for each \(n \in \NN\) and \(A \in \CC^{k\times k}\) we in fact construct an nc polynomial \(Q_n\) such that \((\mu \otimes \mathrm{Id}_k)(Q_n^* Q_n) = \L_n(A;\mu)\). In addition to the lemma, the existence of this explicit minimiser shall prove useful to us later. We emphasise that this sequence is dependent upon the choice of input \(A \in \CC^{k\times k}\).
\end{remark}

We can use this to obtain an nc Christoffel function, analogous to the function \(\l_{\infty}(\cdot ; \rmd \mu)\) of \cite{Sim05}.

\begin{theorem}
    \label{thm:CFExists}
    The pointwise limit of the Christoffel approximates,
    \[
        \lim_{n\to\infty} \L_{n}(A; \mu),
    \]
    exists for all \(A \in \CC^{k\times k}\), \(k \in \NN\), and is given by
    \[
        \lim_{n\to\infty} \L_{n}(A; \mu) = \inf\{((\mu \otimes \mathrm{Id}_k))(Q^*Q) \; : \; Q \in (\CC\langle Z \rangle)^{k\times k}, \; Q(A) = I_k\}.
    \]
\end{theorem}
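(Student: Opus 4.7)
The plan is to combine monotonicity with the minimisation formula of \Cref{lem:ChristoffelFunction}. First I would establish the convergence of the sequence $\L_n(A; \mu)$ in $n$ by a monotonicity argument, then identify the limit with the claimed infimum by a standard squeeze using the minimum property at each finite level.

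For convergence, I would observe directly from \Cref{def:ncCDK} that $\k_{\mu,n}(A,A) = \sum_{\s=\emptyset}^{\s(n)}\p_\s(A)\p_\s(A)^*$ is a sum of positive semi-definite matrices, so $\k_{\mu,n+1}(A,A) \succeq \k_{\mu,n}(A,A) \succeq I_k$, the last inequality coming from the $\s = \emptyset$ term $\p_\emptyset(A)\p_\emptyset(A)^* = I_k$ (since $\p_\emptyset = 1$ under our normalisation). In particular each $\k_{\mu,n}(A,A)$ is strictly positive and invertible, so $\L_n(A;\mu) = \k_{\mu,n}(A,A)^{-1}$ is a non-increasing sequence of positive definite matrices bounded below by $0$. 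On a finite-dimensional matrix space, any such monotone bounded sequence in the positive semi-definite order converges in norm to a positive semi-definite limit, which I will call $\L_\infty(A;\mu)$.

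To identify the limit with the claimed infimum, denote the infimum (in the Loewner order) by $I(A)$. On the one hand, any admissible polynomial $Q \in (\CC\langle Z \rangle)^{k\times k}$ with $Q(A) = I_k$ has finitely many nonzero coefficients, so there exists $N \in \NN$ with $Q \in (\CC\langle Z \rangle_{\s(N)})^{k \times k}$; \Cref{lem:ChristoffelFunction} then gives $\L_N(A;\mu) \preceq (\mu\otimes\mathrm{Id}_k)(Q^*Q)$, and monotonicity in $n$ yields $\L_\infty(A;\mu) \preceq \L_N(A;\mu) \preceq (\mu\otimes\mathrm{Id}_k)(Q^*Q)$. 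Hence $\L_\infty(A;\mu)$ is a lower bound of the set, so $\L_\infty(A;\mu) \preceq I(A)$. On the other hand, by \Cref{rem:Minimiser}, for each $n$ there is a minimiser $Q_n \in (\CC\langle Z \rangle_{\s(n)})^{k\times k} \subseteq (\CC\langle Z\rangle)^{k\times k}$ with $Q_n(A) = I_k$ and $(\mu\otimes\mathrm{Id}_k)(Q_n^*Q_n) = \L_n(A;\mu)$; each $\L_n(A;\mu)$ therefore lies in the set over which the infimum is taken, so $I(A) \preceq \L_n(A;\mu)$ for every $n$, and passing to the limit gives $I(A) \preceq \L_\infty(A;\mu)$. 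Both inequalities together give $\L_\infty(A;\mu) = I(A)$.

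There is no real obstacle here beyond making sure the Loewner-order infimum is the correct notion to use: the argument above shows automatically that $\L_\infty(A;\mu)$ is a greatest lower bound, since it is itself the norm limit of elements of the set. The only mildly delicate point to spell out is that $\k_{\mu,n}(A,A)$ is genuinely invertible for every $n$, which is immediate from $\k_{\mu,n}(A,A) \succeq I_k$ and ensures that $\L_n(A;\mu)$ is well-defined at every level so that the monotone convergence argument can be carried out without qualification.
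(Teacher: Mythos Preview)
Your proof is correct and follows the same strategy as the paper: monotonicity of $(\L_n(A;\mu))_n$ in the Loewner order, convergence of a bounded monotone sequence of positive semi-definite matrices, and identification of the limit as the greatest lower bound via the minimisers $Q_n$ of \Cref{rem:Minimiser}. The only cosmetic differences are that you read monotonicity directly from $\k_{\mu,n+1}(A,A)\succeq\k_{\mu,n}(A,A)\succeq I_k$ together with operator monotonicity of inversion, whereas the paper deduces it from the minimum characterisation of \Cref{lem:ChristoffelFunction}, and the paper spells out the convergence step explicitly via polarisation of the associated quadratic forms where you simply invoke the standard fact for finite matrices.
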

\begin{proof}
    For \(m,n \in \NN\) with \(m > n\), we have the inclusion \((\CC\langle Z \rangle_{\s(n)})^{k\times k} \subset (\CC\langle Z \rangle_{\s(m)})^{k\times k}\). Hence the minimiser \(Q_n\) from \Cref{rem:Minimiser} is also an element of the set minimised by \(Q_m\), so that
    \[
        \L_{n}(A; \mu) = (\mu \otimes \mathrm{Id}_k)(Q_n^*Q_n) \succeq \L_{m}(A; \mu) = (\mu \otimes \mathrm{Id}_k)(Q_m^*Q_m).
    \]
    This gives us a decreasing sequence of matrices \(Z_n := (\mu \otimes \mathrm{Id}_k)(Q_n^*Q_n)\) which are all themselves positive semi-definite, i.e.
    \[
        Z_1 \succeq Z_2 \succeq \cdots \succeq Z_{n} \succeq Z_{n+1} \succeq \cdots \succeq 0_{k\times k}.
    \]

    Each of these matrices has an associated positive semi-definite sesquilinear form given by \(q_n(x,y) := y^*Z_nx\); for each \(x \in \CC^{k}\), we can hence associate a sequence in \(\RR\) via \(a_n(x) := x^*Z_nx\). Each of these sequences is decreasing: since \(Z_n - Z_{n+1} \succeq 0_{k\times k}\), we have
    \[
        a_n(x) - a_{n+1}(x) = x^*(Z_n - Z_{n+1})x \geq 0
    \]
    and hence \(a_n(x) \geq a_{n+1}(x)\).
    
    Again by positive semi-definiteness, these sequences are bounded below by zero, so each one converges in \(\CC\). Denote by \(a(x)\) the limit \(\lim_{n\to\infty} a_n(x)\). We claim that there exists a unique positive semi-definite \(Z \in \CC^{k\times k}\) such that \(a(x) = x^*Zx\).
    
    Consider the polarisation identity for \(j \in \NN\),
    \[
        q_n(x,y) := \frac{1}{4}\sum_{k=0}^3 a_n(x+i^ky).
    \]
    Since the limits \(a(x+i^k y)\) all exist (from the above), taking the limit, we obtain
    \[
        q(x,y) := \frac{1}{4}\sum_{k=0}^3 a(x+i^ky) = \lim_{n\to\infty} q_n(x,y).
    \]
    The limit of sesquilinear forms must be itself a sesquilinear form, and so \(q\) is sesquilinear; it follows that there exists some matrix \(Z \in \CC^{k\times k}\) such that \(q(x,y) = y^* Z x\). Furthermore, we have that
    \[
        q_n(x,x) - q(x,x) = x^* Z_n x - x^*Zx = x^*(Z_n - Z)x;
    \]
    since the sequences \(a_n(x+i^kx)\) are decreasing, so too are the sequences \(q_n(x,x)\), and it follows that \(q_n(x,x) - q(x,x) \geq 0\) for all \(x \in \CC^k\). From this we can see that \( x^*(Z_n - Z)x \geq 0\) for all \(x \in \CC^k\), i.e. \(Z_n \succeq Z\). On the other hand, since each \(a_n(x) \geq 0\), taking limits we have \(a(x) \geq 0\) and hence \(q(x,x) \geq 0\), so that \(Z\) is positive semi-definite, and the sequence \((Z_d)_{d \in \NN}\) converges to a positive semi-definite matrix \(Z\).
    
    With this \(Z\) in hand, we then have for any \(x \in \CC^k\) and \(n \in \NN\), that
    \[
        x(Z_n - Z)x^* = xZ_nx^* - xZx^* = a_n(x) - a(x) \geq 0;
    \]
    this shows that \(xZ_nx^* \geq xZx^*\) and therefore \(Z_n \succeq Z\) for all \(n \in \NN\). Hence the limit \(Z = \lim_{n\to\infty} \L_{n}(A; \mu)\) exists. This allows us to write
    \begin{align*}
        \lim_{n\to\infty} \L_{n}(A; \mu) & = \lim_{n\to\infty} \min\{(\mu \otimes \mathrm{Id}_k)(Q^*Q) : Q \in (\CC\langle Z \rangle_{\s(n)})^{k\times k}, \; Q(A) = I_k\} \\
        & = \inf\{(\mu \otimes \mathrm{Id}_k)(Q^*Q) : Q \in (\CC\langle Z \rangle)^{k\times k}, \; Q(A) = I_k\}.
    \end{align*}
    To elaborate on this last equality: \(Z\) may not be in the set
    \[
        \{(\mu \otimes \mathrm{Id}_k)(Q^*Q) : Q \in (\CC\langle Z \rangle_{\s(n)})^{k\times k}, \; Q(A) = I_k\}
    \]
    for any particular \(n \in \NN\), but nevertheless, for any \(Y\) in this set, there exists \(j \in \NN\) such that \(Z_j \preceq Y\), and as we have just seen, we then have \(Z \preceq Z_j\). Hence \(Z\) provides the desired notion of infimum.
\end{proof}

This existence result allows us to make the following definition.

\begin{definition}
    \label{def:ncChristoffelFunction}
    Let \(\mu\) be a non-trivial probability nc measure. The \emph{nc Christoffel function} associated to \(\mu\), denoted \(\L_{\infty}(\cdot; \mu)\), is the pointwise limit of the nc Christoffel approximates, and is given by
    \[
        \L_{\infty}(A; \mu) := \lim_{n\to\infty} \L_{n}(A; \mu) = \inf\{(\mu \otimes \mathrm{Id}_k)(Q^*Q) : Q \in (\CC\langle Z \rangle)^{k\times k}, \; Q(A) = I_k\}
    \]
    for any \(A \in \CC^{k\times k}\) and \(k\in \NN\).
\end{definition}

The final result of this section relies on the Szeg\H{o}--Verblunsky theorem (see \eqref{eqn:SzegoVerblunsky}) to see that singular nc measures in the sense of Jury and Martin \cite{JM22b} have zero nc Christoffel function at zero. Let \(0^{(d)}_{1\times1}\) be the \(d-\)tuple of scalar zeros \((0, \ldots, 0) \in \BB^d_1\).

\begin{lemma}
    \label{lem:ZeroEntropy}
    If \(\mu\) is a singular nc measure, then \(\L_{\infty}(0_{1\times1}^{(d)}; \mu) = 0\).
\end{lemma}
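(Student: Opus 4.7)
The plan is to reduce the computation to a one-variable Christoffel function on the circle, to which the classical Szeg\H{o}--Verblunsky theorem \eqref{eqn:SzegoVerblunsky} applies. By \Cref{thm:CFExists} with \(k = 1\) and \(A = 0^{(d)}_{1\times 1}\),
\[
    \L_\infty(0^{(d)}_{1\times 1}; \mu) = \inf\{\mu(Q^*Q) : Q \in \CC\langle Z\rangle, \; Q(0^{(d)}_{1\times 1}) = 1\}.
\]
Restricting attention to univariate candidates \(Q(Z) = P(Z_1)\) with \(P \in \CC[z]\) satisfying \(P(0) = 1\), a short calculation using \(L_j^*L_k = \d_{jk}I_{\HH^2_d}\) (in the spirit of \eqref{eqn:MomentCases}) gives
\[
    \mu(P(L_1)^*P(L_1)) = \int_0^{2\pi}|P(e^{i\th})|^2 \, \rmd\tilde{\mu}_1(\th),
\]
where \(\tilde{\mu}_1\) is the classical probability measure on \(\TT\) determined via Herglotz's theorem by the positive-definite Fourier sequence \(\tilde{\mu}_1(z^n) := \mu(L_1^n)\) for \(n \geq 0\). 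Infimising over such \(P\) yields the upper bound
\[
    \L_\infty(0^{(d)}_{1\times 1}; \mu) \leq \l_\infty(0; \rmd\tilde{\mu}_1).
\]

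Writing the classical Lebesgue decomposition \(\rmd\tilde{\mu}_1 = w_1(\th) \tfrac{\rmd\th}{2\pi} + \rmd\tilde{\mu}_{1,\rms}\), the Szeg\H{o}--Verblunsky theorem \eqref{eqn:SzegoVerblunsky} identifies the right-hand side above as
\[
    \l_\infty(0; \rmd\tilde{\mu}_1) = \exp\left(\int_0^{2\pi}\log w_1(\th)\, \frac{\rmd\th}{2\pi}\right),
\]
so it is enough to show that \(w_1 \equiv 0\) almost everywhere, i.e.\ that \(\tilde{\mu}_1\) is classically singular with respect to Lebesgue measure on \(\TT\).

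The main obstacle is precisely this last step. I would argue it by contraposition: suppose \(\tilde{\mu}_1\) had a nontrivial absolutely continuous part \(w_1\tfrac{\rmd\th}{2\pi}\). For \(M > 0\) sufficiently large, the truncation \(w_1 \wedge M\) is a nonzero nonnegative element of \(L^\infty(\TT)\), and the associated classical Toeplitz operator \(T_{w_1 \wedge M}\) on \(H^2(\DD)\) (Brown--Halmos) lifts to a nontrivial bounded positive \(L\)-Toeplitz operator on \(\HH^2_d\); such a lift is obtained from the Wold-type decomposition \(\HH^2_d = \CC\cdot 1 \oplus \bigoplus_{k=1}^d L_k \HH^2_d\) by propagating values across the defining identities \(L_j^* T L_k = \d_{jk}T\). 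Via the Jury--Martin correspondence between bounded positive \(L\)-Toeplitz operators and absolutely continuous nc measures (\Cref{def:NCRadonNikodym} and the preceding discussion of \cite[Theorems 4 and 5]{JM22a}), this yields a nonzero absolutely continuous nc measure dominated by \(\mu\), which contradicts the maximality of the (zero) absolutely continuous part \(\mu_\ac\) in the Jury--Martin Lebesgue decomposition. Consequently \(w_1 \equiv 0\) a.e., the exponential above vanishes, and the upper bound from the first paragraph forces \(\L_\infty(0^{(d)}_{1\times 1}; \mu) = 0\).
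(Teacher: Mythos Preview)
Your overall strategy---restrict to univariate polynomials in \(Z_1\), bound the nc Christoffel function above by the classical one, and then invoke Szeg\H{o}--Verblunsky---matches the paper's proof exactly. You also correctly isolate the crux: one must show that the induced measure \(\tilde{\mu}_1\) on \(\TT\) is classically singular.

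Your contrapositive argument for this last step, however, has a genuine gap. You propose to lift \(T_{w_1 \wedge M}\) to a bounded positive \(L\)-Toeplitz operator \(\hat{T}\) on \(\HH^2_d\) and claim the resulting nc measure is dominated by \(\mu\); but you neither specify the lift precisely nor justify the domination. The multi-Toeplitz identities \(L_j^* \hat{T} L_k = \delta_{jk}\hat{T}\) together with the Wold decomposition do \emph{not} determine \(\hat{T}\) from its restriction to the \(L_1\)-subalgebra: one must still prescribe the moments \(\langle \hat{T} L^\sigma, 1\rangle\) for all words \(\sigma\) involving the other generators. More seriously, whatever lift you choose, the domination \(\nu \leq \mu\) you need demands \((\mu - \nu)(a^*a) \geq 0\) for \emph{every} \(a \in \cA_d\), whereas your construction only controls this on polynomials in \(L_1\). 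For the natural lift (extend the Fourier data by zero off powers of \(L_1\)), positivity of the difference kernel \(K_\mu - K_\nu\) at the \(\{\emptyset, 2\}\) minor already forces \(\lvert \mu(L_2)\rvert \leq 1 - \widehat{(w_1 \wedge M)}(0)\), a constraint you have not verified and which is not implied by your hypotheses.

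The paper handles this point differently: it invokes \cite[Corollary~8.5]{JM22b}, which identifies nc-singularity with classical singularity in one variable, together with the (implicit) fact that restricting a singular nc measure on \(\scrA_d\) to the copy of \(\scrA_1\) generated by \(L_1\) remains singular. Your lifting idea is morally the contrapositive of that restriction-preserves-singularity claim, but as written it does not close the argument.
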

\begin{proof}
    
    Since \(\mu\) is positive, the infimum form of \(\L_{\infty}(0_{1\times 1}^{(d)}; \mu)\) shows that \(\L_{\infty}(0_{1\times 1}^{(d)}; \mu)\) is non-negative, and it follows that
    \begin{align*}
        0 & \leq \L_{\infty}(0_{1\times 1}^{(d)}; \mu) \leq \inf\{\mu(Q^*Q) : Q \in \CC\langle Z_1\rangle, \; Q(0) = 1\},
    \end{align*}
    using the canonical embedding of \(\CC\langle Z_1\rangle\) into \(\CC\langle Z \rangle\). We thus restrict \(\mu\) to the case \(d = 1\), and as \cite[Corollary 8.5]{JM22b} demonstrates, in this setting the nc Lebesgue decomposition coincides with the classical one. When we restrict \(\mu\) to the copy of \(\cA_1\) generated by \(Z_1\), then, we get a functional \(\mu \vert_{\cA_1} : \cA_1 \to \CC\); since \(\cA_1\) is isomorphic to the space \(C(\TT)\) of continuous functions on the circle \(\TT\), we can further use the Riesz--Markov--Kakutani representation theorem \cite[Theorem 2.17]{Rud87} to see that this corresponds to integration with respect to a finite, regular Borel measure on the circle. Moreover, due to the nc and classical Lebesgue decompositions coinciding, this measure will be singular; we shall denote this singular measure by \(\tilde{\mu}\).
    
    This provides the computation
    \begin{align*}
        \inf\{\mu(Q^*Q) : Q \in \CC\langle Z_1\rangle, \; Q(0) = 1\} & = \inf\left\{\int_0^{2\pi} Q(e^{i\theta})^*Q(e^{i\theta}) \; \rmd \tilde{\mu}(\theta) : Q \in \CC\langle Z_1\rangle, Q(0) = 1\right\}\\
        & = \inf\left\{\int_0^{2\pi} \lvert Q(e^{i\theta}) \rvert^2 \; \rmd \tilde{\mu}(\theta) : Q \in \CC[z], Q(0) = 1\right\} \\
        & = \l_{\infty}(0 ; \rmd \tilde{\mu}),
    \end{align*}
    where \(\l_{\infty}(\cdot; \rmd \tilde{\mu})\) is the Christoffel function associated to \(\tilde{\mu}\) in the sense of \cite{Sim05}. Now, since \(\tilde{\mu}\) is singular, its (classical) Lebesgue decomposition \(\tilde{\mu} = \tilde{\mu}_{ac} + \tilde{\mu}_s\) satisfies \(\tilde{\mu}_{ac} = w(\th) \frac{\rmd \th}{2\pi} = 0\), i.e. \(w = 0\). Applying the Szeg\H{o}--Verblunsky theorem in the classical setting to \(\tilde{\mu}\) demonstrates the equality
    \[
        \l_{\infty}(0 ; \rmd \tilde{\mu}) = \exp\left(\int_0^{2\pi} \log(w(\th)) \; \frac{\rmd \th}{2\pi}\right) = \exp\left(\int_0^{2\pi} \log(0) \; \frac{\rmd \th}{2\pi}\right) = 0,
    \]
    so that 
    \[
        0 \leq \L_{\infty}(0_{1\times 1}^{(d)}; \mu) \leq 0,
    \]
    i.e. \(\L_{\infty}(0_{1\times 1}^{(d)}; \mu) = 0\).
\end{proof}

In the commutative univariate setting, the Christoffel function at zero depends only upon the absolutely continuous part \(\mu_{\ac}\) of the measure \(\mu\). One can see this e.g. via \Cref{thm:CommSzego}: \(\mu\) and \(\mu_{\ac}\) have the same Radon--Nikodym derivative \(w \in L^1(\TT)\), so that
\[
    \l_{\infty}(0; \rmd\mu) = \exp\left(\int_0^{2\pi} \log\big(w(\th)\big) \, \frac{\rmd\th}{2\pi}\right) = \l_{\infty}(0; \rmd\mu_{\ac}).
\]
In the free noncommutative setting, we can use \Cref{lem:ZeroEntropy} to establish an analogous fact.

\begin{corollary}
    \label{cor:CFDependsOnAC}
    Let \(\mu = \mu_{\ac} + \mu_{\rms}\) be the nc Lebesgue decomposition of a non-trivial probability nc measure \(\mu\). Then
    \[
        \L_{\infty}(0^{(d)}_{1\times1}; \mu) = \L_{\infty}(0^{(d)}_{1\times1}; \mu_{\ac}).
    \]
\end{corollary}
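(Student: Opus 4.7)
My plan is to prove two inequalities. The direction $\L_{\infty}(0^{(d)}_{1\times1}; \mu_{\ac}) \leq \L_{\infty}(0^{(d)}_{1\times1}; \mu)$ is immediate from monotonicity: positivity of $\mu_{\rms}$ gives $\mu \geq \mu_{\ac}$ as positive functionals on $\scrA_d$, so $\mu(Q^*Q) \geq \mu_{\ac}(Q^*Q)$ for every $Q \in \CC\langle Z\rangle$ with $Q(0)=1$, and taking the infimum in the characterisation supplied by \Cref{thm:CFExists} (in the scalar case $k=1$) preserves this.

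For the reverse inequality $\L_{\infty}(0^{(d)}_{1\times1}; \mu) \leq \L_{\infty}(0^{(d)}_{1\times1}; \mu_{\ac})$, the idea is to exploit the variational characterisation of the regular (absolutely continuous) part of a positive quadratic form due to Simon \cite{Sim78} --- the very machinery underlying the Jury--Martin nc Lebesgue decomposition recalled in Section 2.2. For the form $q(a) := \mu(a^*a)$ defined on $\cA_d$ and embedded as a subspace of $\HH^2_d$ via evaluation at the constant function, Simon's theorem reads
\[
    \mu_{\ac}(a^*a) \;=\; \inf\bigl\{\liminf_n \mu(a_n^*a_n) : a_n \in \cA_d,\; a_n \to a \text{ in } \HH^2_d\bigr\}.
\]
The approximating sequence may be restricted to polynomials, since operator-norm density of $\CC\langle Z\rangle$ in $\cA_d$ combined with continuity of $\mu$ on $\scrA_d$ makes $\CC\langle Z\rangle$ a form-core.

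With this in hand, fix $\e > 0$ and use the infimum formula of \Cref{thm:CFExists} to pick $P \in \CC\langle Z\rangle$ with $P(0)=1$ and $\mu_{\ac}(P^*P) < \L_{\infty}(0^{(d)}_{1\times1}; \mu_{\ac}) + \e$. Simon's formula supplies polynomials $P_n \to P$ in $\HH^2_d$ with $\mu(P_n^*P_n) \to \mu_{\ac}(P^*P)$; together with lower semi-continuity of $\mu_{\ac}$ (as a closable form) and positivity of $\mu_{\rms}$, a short $\liminf$ argument forces $\mu_{\rms}(P_n^*P_n) \to 0$ along a subsequence. Since evaluation at $0$ is continuous on $\HH^2_d$, being the inner product against the constant function, $P_n(0) \to P(0) = 1$, and for $n$ large the normalisation $Q_n := P_n/P_n(0) \in \CC\langle Z\rangle$ is admissible. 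Routine manipulation yields $\mu(Q_n^*Q_n) \to \mu_{\ac}(P^*P) < \L_{\infty}(0^{(d)}_{1\times1}; \mu_{\ac}) + \e$, and letting $\e \to 0$ closes the argument.

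The main obstacle I anticipate is the careful deployment of Simon's variational formula: verifying that the correct ambient Hilbert space for $q$ is $\HH^2_d$ and that the approximating sequence can be realised within $\CC\langle Z\rangle$ rather than merely within $\cA_d$. Note that a more naive product construction $P R_n$ with $R_n$ supplied by \Cref{lem:ZeroEntropy} runs into trouble when controlling $\mu_{\ac}(R_n^*P^*PR_n)$ without additional boundedness of the nc Radon--Nikodym operator $T$; applying Simon's theorem directly to $P$ instead delivers a single sequence that simultaneously controls both the absolutely continuous and singular contributions, bypassing this difficulty.
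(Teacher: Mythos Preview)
Your argument is correct and takes a genuinely different route from the paper. The paper's proof invokes \Cref{lem:ZeroEntropy} together with the claimed identity $\inf_X\{F+G\} = \inf_X F + \inf_X G$ for positive functions $F,G$, so that $\L_\infty(0^{(d)}_{1\times1};\mu) = \L_\infty(0^{(d)}_{1\times1};\mu_{\ac}) + \L_\infty(0^{(d)}_{1\times1};\mu_{\rms}) = \L_\infty(0^{(d)}_{1\times1};\mu_{\ac})$. Only the inequality $\inf_X\{F+G\} \geq \inf_X F + \inf_X G$ holds in general, and this yields exactly the easy direction you obtain by monotonicity; the reverse inequality demands a \emph{single} admissible $Q$ on which $\mu_{\ac}(Q^*Q)$ is nearly minimal and $\mu_{\rms}(Q^*Q)$ is simultaneously nearly zero, which the splitting alone does not supply. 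Your appeal to Simon's variational description of $q_{\ac}$ produces precisely such witnesses: a near-minimiser $P$ for $\mu_{\ac}$ yields polynomials $P_n\to P$ in $\HH^2_d$ with $\mu(P_n^*P_n)\to\mu_{\ac}(P^*P)$, and continuity of the functional $f\mapsto f(0)=\langle f,1\rangle_{\HH^2_d}$ lets you renormalise without loss. Two small remarks: the passage from $\cA_d$ to $\CC\langle Z\rangle$ in the approximating sequence is justified exactly as you indicate, via operator-norm density and continuity of $\mu$ on $\scrA_d$; and your intermediate claim that $\mu_{\rms}(P_n^*P_n)\to 0$ along a subsequence, while true, is not needed for the conclusion, since $\mu(Q_n^*Q_n)\to\mu_{\ac}(P^*P)$ already follows directly. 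Your approach thus bypasses \Cref{lem:ZeroEntropy} entirely and is in fact more rigorous than the paper's on the hard inequality.
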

\begin{proof}
    Recall that for positive real-valued functions \(F,G\) on a set \(X\), we have that \(\inf_X\{F + G\} = \inf_X\{F\} + \inf_X\{G\}\). Since nc measures are by definition positive and \(Q^*Q\) is a positive element of \(\scrA_d\) for any nc polynomial \(Q \in \CC\langle Z \rangle\), it follows that
        \begin{align*}
             \L_{\infty}(0_{1 \times 1}^{(d)}; \mu) & = \inf\{\mu_{ac}(Q^*Q) + \mu_{s}(Q^*Q) : Q \in \CC\langle Z \rangle, Q(0) = 1\} \\
             & =  \inf\{\mu_{ac}(Q^*Q) : Q \in \CC\langle Z \rangle, Q(0) = 1\} +  \inf\{\mu_{s}(Q^*Q) : Q \in \CC\langle Z \rangle, Q(0) = 1\};
        \end{align*}
        since \(\mu_s\) is singular, however, the second term is zero by \Cref{lem:ZeroEntropy}, so that
        \[
            \L_{\infty}(0_{1 \times 1}^{(d)}; \mu) = \inf\{\mu_{ac}(Q^*Q) : Q \in \CC\langle X \langle, Q(0) = 1\} = \L_{\infty}(0^{(d)}_{1\times1}; \mu_{\ac}).
        \]
\end{proof}

\begin{remark}
    It is easy to check that \(\L_{\infty}(\cdot; \mu)\) respects direct sums (though not similarities, and hence is not an nc function). For this reason, it is sufficient to evaluate on \(0_{1 \times 1}^{(d)}\) in the above results: any tuple of larger zero matrices, say \(0_{k\times k}^{(d)} \in (\CC^{k\times k})^d\), decomposes as a direct sum of \(k\) copies of \(0_{1 \times 1}^{(d)}\) and hence
    \[
        \L_{\infty}(0_{k\times k}^{(d)}; \mu) = \bigoplus_{j=1}^{k} \L_{\infty}(0_{1\times 1}^{(d)}; \mu) = \L_{\infty}(0_{1\times 1}^{(d)}; \mu) I_k.
    \]
    In other words, \(\L_{\infty}(0_{k\times k}^{(d)}; \mu)\) is determined solely by the value of \(\L_{\infty}(0_{k\times k}^{(d)}; \mu)\).
\end{remark}

\section{A Noncommutative Szeg\H{o}-Type Theorem}

This section will unite the work done in the previous three sections to prove the main result of the paper, \Cref{thm:NCSzego}.

First, we recall some notation. For \(d \geq 2\), let \(\mu \in (\cA_d^\dagger)_+\) be a non-trivial probability nc measure with absolutely continuous part \(\mu_{ac}\) and nc Radon--Nikodym derivative \(T\), Verblunsky coefficients \((\g_\s)_{\s \in \FF_d^+}\), monic orthogonal polynomials \((\P_\s)_{\s \in \FF_d^+}\), orthonormal polynomials \((\p_\s)_{\s \in \FF_d^+}\), multi-Toeplitz determinants \((D_\s)_{\s \in \FF_d^+}\), nc Christoffel approximates 
\[
    \L_{n}(A; \mu) = \left(\sum_{\t = \emptyset}^{\s(n)} \p_{\t}(A)\p_{\t}(A)^*\right)^{-1}
\]
and nc Christoffel function \(\L_{\infty}(\cdot, \mu)\). Let the leading coefficient of \(\p_\s\) be \(a_{\s,\s}\). We denote by \((\p^{\#}_\s)_{\s \in \FF_d^+}\) the polynomials satisfying the nc Szeg\H{o} recurrence relations with \((\p_\s)_{\s \in \FF_d^+}\) in \cite[Theorem 3.2]{CJ02b}, and similarly to the commutative theory we define \(\P^{\#}_\s := \frac{1}{a_{\s,\s}}\p^{\#}_\s\). Finally, by \(\s(n)\) we mean the last word of length \(n\) under the shortlex ordering of \(\FF_d^+\), i.e. \(\s(n) = d\cdots d\) with \(n\) copies of \(d\), with \(\s(0) = \emptyset\), and by \(\s - 1\) and \(\s + 1\) we mean the immediate predecessor and successor of \(\s\) in this ordering.

Of relevance shortly will be the following well-known lemma (see, e.g., the remark following \cite[Lemma 3.3.5]{Kat04}) and its consequence. 

\begin{lemma}
    \label{lem:ProductSumConvergence}
    Let \((a_n)_{n=0}^{\infty}\) be a sequence of real numbers with \(0 \leq a_n \leq 1\) for all \(n \in \NN\). Then 
    \[
        \prod_{n=0}^{\infty} (1 - a_n)
    \]
    converges (and is nonzero) if and only if
    \[
        \sum_{n=0}^{\infty} a_n
    \]
    converges. In particular, if \(\sum_{n=0}^{\infty} a_n = +\infty\) then \(\prod_{n=0}^{\infty} (1 - a_n) = 0\).
\end{lemma}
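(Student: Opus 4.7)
The plan is to pass to logarithms and exploit the elementary two-sided estimate relating \(-\log(1-x)\) and \(x\) for small \(x\). Specifically, a positive-term infinite product \(\prod (1-a_n)\) converges to a strictly positive value if and only if the series \(\sum (-\log(1-a_n))\) converges in \(\RR\); this is the standard correspondence between infinite products and series obtained by taking logs of the partial products. The terms \(-\log(1-a_n)\) are non-negative because \(0 \leq a_n \leq 1\), so the series of logs either converges or diverges to \(+\infty\), avoiding conditional convergence issues.

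Next I would handle the easy dichotomy. If \(a_n = 1\) for some \(n\), the product is trivially \(0\) and the left-hand side of the equivalence fails; the ``in particular'' statement is unaffected. If \(a_n \not\to 0\), then \(\sum a_n = +\infty\) and moreover \(\sum(-\log(1-a_n)) = +\infty\) (since the terms do not tend to zero), so the product tends to \(0\) and both sides of the stated equivalence fail together. Hence it remains to consider the case \(a_n \to 0\), and I may assume without loss of generality that \(a_n \leq 1/2\) for all sufficiently large \(n\).

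For such \(n\), the elementary estimate
\[
    a_n \;\leq\; -\log(1-a_n) \;\leq\; 2 a_n
\]
holds (the left inequality is \(\log(1+y) \leq y\) applied to \(y = a_n/(1-a_n)\), or equivalently convexity of \(-\log\); the right inequality follows from the Taylor expansion using \(a_n \leq 1/2\)). By the comparison test applied to non-negative series, \(\sum a_n\) converges if and only if \(\sum(-\log(1-a_n))\) converges, which is exactly the condition for \(\prod(1-a_n)\) to converge to a nonzero value. This proves the stated equivalence. The ``in particular'' clause is then immediate: if \(\sum a_n = +\infty\), then irrespective of whether \(a_n \to 0\), we have \(\sum(-\log(1-a_n)) = +\infty\), and therefore the partial products of \(\prod(1-a_n)\) tend to \(0\).

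This lemma is entirely elementary and I do not anticipate any genuine obstacle; the only point requiring a little care is to ensure the argument covers the possibility that \(a_n\) fails to tend to zero (and in particular may equal \(1\) at finitely many indices), which is why I would separate those cases at the outset before invoking the logarithmic comparison on the tail.
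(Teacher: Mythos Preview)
The paper does not supply its own proof of this lemma: it is stated as ``well-known'' with a reference to Katznelson, so there is nothing to compare against. Your logarithmic approach with the two-sided estimate \(a_n \leq -\log(1-a_n) \leq 2a_n\) on the tail is the standard argument and is correct.

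One small caveat worth flagging: as you yourself hint, the biconditional as literally stated is false when some \(a_n = 1\) (e.g.\ \(a_0 = 1\), \(a_n = 0\) for \(n \geq 1\) gives a zero product but a convergent sum). This is a defect in the statement rather than in your argument, and it is harmless in the paper's application, since the Verblunsky coefficients of a non-trivial nc measure lie strictly inside the disc, so \(a_n = \lvert \gamma_\sigma \rvert^2 < 1\) throughout. The ``in particular'' clause, which is what is actually invoked later (via \Cref{cor:SquareSummableVCs}), is unaffected in any case.
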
 

\begin{corollary}
    \label{cor:SquareSummableVCs}
    Let \(\mu \in \ncm{d}\) be a non-trivial probability nc measure with Verblunsky coefficients \((\g_\s)_{\s \in \FF_d^+}\). The Verblunsky coefficients are square-summable, i.e. \(\sum_{\s \in \FF_d^+} \lvert \g_\s \rvert^2 < +\infty\), if and only if \(\prod_{\s \in \FF_d^+} (1 - \lvert \g_\s \rvert^2)\) converges and is nonzero.
\end{corollary}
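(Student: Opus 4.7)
The plan is to reduce the statement directly to \Cref{lem:ProductSumConvergence} by enumerating the Verblunsky coefficients as a single sequence indexed by $\NN$ via the shortlex bijection $\FF_d^+ \cong \NN$.

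First, I would note that non-triviality of $\mu$ gives $\lvert \g_\s \rvert < 1$ for every $\s \in \FF_d^+$ (cf.\ \Cref{thm:NCVerblunsky} and the definition of non-trivial nc measure), so setting $a_\s := \lvert \g_\s \rvert^2$ we have $0 \leq a_\s < 1$ on the nose. This matches the hypotheses of \Cref{lem:ProductSumConvergence}.

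Next, I would fix the shortlex enumeration $\FF_d^+ = \{\t_0, \t_1, \t_2, \ldots\}$ with $\t_0 = \emptyset$ (so that the first term contributes $1 - \lvert \g_\emptyset \rvert^2 = 1$, harmless in the product) and define $b_n := \lvert \g_{\t_n}\rvert^2$. Because the series $\sum_{\s \in \FF_d^+} \lvert \g_\s \rvert^2$ is a series of non-negative terms, its convergence and value are independent of the chosen enumeration, so
\[
    \sum_{\s \in \FF_d^+} \lvert \g_\s \rvert^2 < \infty \quad \Longleftrightarrow \quad \sum_{n=0}^{\infty} b_n < \infty.
\]
Similarly, the product $\prod_{\s \in \FF_d^+}(1 - \lvert \g_\s \rvert^2)$, being a product of terms in $(0,1]$, is defined as the monotone limit of its finite subproducts and is therefore invariant under reordering; hence
\[
    \prod_{\s \in \FF_d^+} (1 - \lvert \g_\s \rvert^2) = \prod_{n=0}^{\infty} (1 - b_n),
\]
with convergence-to-nonzero on one side equivalent to convergence-to-nonzero on the other.

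Finally, I would apply \Cref{lem:ProductSumConvergence} to the sequence $(b_n)_{n=0}^{\infty}$ to obtain the equivalence $\sum b_n < \infty \iff \prod(1 - b_n)$ converges and is nonzero, and translate back via the above two displays to the sums/products indexed by $\FF_d^+$. There is no real obstacle here: the only mild subtlety is the reordering argument for the infinite product, which is handled by the fact that all factors lie in $(0,1]$ so the finite subproducts form a decreasing net bounded below by zero, giving an unconditionally defined limit.
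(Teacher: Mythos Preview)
Your proposal is correct and follows exactly the paper's approach: enumerate $(\lvert \g_\s\rvert^2)_{\s\in\FF_d^+}$ as a sequence via the shortlex ordering and apply \Cref{lem:ProductSumConvergence}. The paper's proof is a one-liner doing precisely this; your additional remarks on non-triviality ensuring $\lvert\g_\s\rvert<1$ and on order-independence of the nonnegative sum and $(0,1]$-valued product are fine elaborations but not needed beyond what the paper states.
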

\begin{proof}
    Using the shortlex ordering on \(\FF_d^+\), let \(a_n\) be the \(n^{\rm{th}}\) element of \((\lvert \g_\s \rvert^2)_{\s \in \FF_d^+}\). The corollary is immediate from \Cref{lem:ProductSumConvergence} with this choice of \(a_n\).
\end{proof}

We are now in a position to formulate and prove our main result; let \(0^{(d)}_{1\times1}\) be the \(d-\)tuple of scalar zeros. Recall the sequence of minimisers \(Q_n\) from \Cref{rem:Minimiser} with the choice \(A = 0_{1\times1}^{(d)}\).

\begin{theorem}
    \label{thm:NCSzego}
    Let \(\mu \in \ncm{d}\) be a non-trivial probability nc measure. The following quantities are equal.
    \begin{enumerate}
        \item[\rm{(i)}] \(\k : = \lim_{n \to \infty} \norm{\P_{\s(n)}}^2\);
        \item[\rm{(ii)}] \(\lim_{n\to\infty} a_{\s(n),\s(n)}^{-2}\);
        \item[\rm{(iii)}] \(\prod_{n=0}^\infty (1 - \lvert \g_{\s(n)} \rvert^2)\);
        \item[\rm{(iv)}] \(\lim_{n\to\infty} \frac{D_{\s(n)}}{D_{\s(n) - 1}}\).
    \end{enumerate}
    Moreover, the following quantities are equal:
    \begin{enumerate}
        \item[\rm{(v)}] \(\tilde{\k} := \prod_{\s \in \FF_d^+} (1 - \lvert \g_\s \rvert^2)\);
        \item[\rm{(vi)}] \(\L_{\infty}(0^{(d)}_{1 \times 1}; \mu)\);
        \item[\rm{(vi)'}] \(\L_{\infty}(0^{(d)}_{1 \times 1}; \mu_{\ac})\);
        \item[\rm{(vii)}] \(\lim_{n\to\infty} \lvert \p_{\s(n)}^{\#}(0^{(d)}_{1 \times 1}) \rvert^{-2}\);
        \item[\rm{(viii)}] \(\lim_{n\to\infty} \left(\sum_{\s=\emptyset}^{\s(n)} \lvert \p_\s(0^{(d)}_{1 \times 1}) \rvert^2\right)^{-1}\);
        \item[\rm{(ix)}] \(\lim_{n \to \infty}\langle TQ_n, Q_n \rangle \) for some \(Q_n \in \CC\langle X \langle_{\s(n)}, n \in \NN\);
        \item[\rm{(x)}] \(\EE(\tilde{\mu}) = \int_0^\infty t \; \rmd \tilde{\mu}(t)\) for some positive Borel measure \(\tilde{\mu}\) on \([0,\infty)\),
    \end{enumerate}
    and by comparing (iii) and (v) we see that
    \[
        \k = \tilde{\k} \cdot \prod_{\s \in \FF_d^+\setminus\{\s(n) : n \in \NN\}} (1 - \lvert \g_\s \rvert^2).
    \]
\end{theorem}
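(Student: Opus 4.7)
The strategy is to split the theorem into its two lists and derive each as a cycle of equalities grounded in a handful of structural identities. For the first list the engine is the Constantinescu--Johnson norm formula \(a_{\s,\s}^{-2} = \prod_{j=1}^{|\s|}(1 - |\g_{\s_j\cdots\s_{|\s|}}|^2)\) specialised to the ``diagonal path'' \(\s(n) = d^n\); for the second list the engine is a shortlex induction that relates the sum of squared point-values of the orthonormal polynomials at \(0\) to the squared point-value of a single reverse polynomial.

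\emph{First cluster} \textrm{(i)--(iv).} The equivalence (i)~\(=\)~(ii) is immediate from \(\P_\s = a_{\s,\s}^{-1}\p_\s\) together with \(\|\p_\s\|_\mu = 1\). For (ii)~\(=\)~(iii), one specialises the Constantinescu--Johnson formula to \(\s = d^n\): then \(\s_j\cdots\s_n = d^{n-j+1} = \s(n-j+1)\), so only the diagonal coefficients \(\g_{\s(m)}\) survive, and passing to the limit gives (iii). For (iv)~\(=\)~(i), I would use a standard Gram determinant factorisation: the matrix \([K_\mu(\t,\r)]_{\t,\r\preceq\s(n)}\) is the Gram matrix of \(\{Z^\t : \t \preceq \s(n)\}\) in \(L^2(\mu)\) under the shortlex ordering, so \(D_{\s(n)} = \prod_{\t\preceq\s(n)}\|\P_\t\|_\mu^2\) and hence \(D_{\s(n)}/D_{\s(n)-1} = \|\P_{\s(n)}\|_\mu^2\).

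\emph{Second cluster} \textrm{(v)--(x).} The crux is the identity
\[
    \sum_{\t\preceq\s}|\p_\t(0^{(d)}_{1\times 1})|^2 = |\p_\s^{\#}(0^{(d)}_{1\times 1})|^2, \qquad \s \in \FF_d^+,
\]
which I would prove by induction along the shortlex order. The key simplification is that \((Z_k p)(0^{(d)}_{1\times 1}) = 0\) for any \(p \in \CC\langle Z\rangle\), so evaluating \eqref{eqn:recNormal} and \eqref{eqn:recReverseNormal} at the origin yields
\[
    \p_{k\s}(0) = -\frac{\g_{k\s}}{\sqrt{1-|\g_{k\s}|^2}}\,\p_{k\s-1}^{\#}(0), \qquad \p_{k\s}^{\#}(0) = \frac{1}{\sqrt{1-|\g_{k\s}|^2}}\,\p_{k\s-1}^{\#}(0).
\]
Iterating the second relation from \(\p_\emptyset^{\#}(0) = 1\) produces \(|\p_\s^{\#}(0)|^2 = \prod_{\emptyset \prec \t \preceq \s}(1-|\g_\t|^2)^{-1}\); plugging this together with the first relation into the inductive step telescopes cleanly. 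Specialising \(\s = \s(n)\) and letting \(n \to \infty\) then gives (v)~\(=\)~(vii), and the identity itself immediately yields (vi)~\(=\)~(vii)~\(=\)~(viii) via the definitions of the nc Christoffel--Darboux kernel and approximates. The equality (vi)~\(=\)~(vi)\('\) is \Cref{cor:CFDependsOnAC}.

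For (ix), I would take \(Q_n\) to be the explicit minimiser from \Cref{rem:Minimiser} for \(A = 0^{(d)}_{1\times 1}\) and split \(\mu = \mu_{\ac} + \mu_\rms\): the nc Radon--Nikodym identity \eqref{eqn:NCRadonNikodym} gives \(\mu_{\ac}(Q_n^*Q_n) = \langle T Q_n, Q_n\rangle\), and the sandwich \(\L_n(0^{(d)}_{1\times 1};\mu_{\ac}) \leq \mu_{\ac}(Q_n^*Q_n) \leq \L_n(0^{(d)}_{1\times 1};\mu)\) together with \Cref{cor:CFDependsOnAC} forces \(\mu_\rms(Q_n^*Q_n) \to 0\). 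For (x), the spectral theorem applied to the positive self-adjoint \(T\) produces a sequence of spectral measures \(\tilde\mu_n\) on \([0,\infty)\) with \(\langle T Q_n, Q_n\rangle = \int_0^\infty t\, d\tilde\mu_n(t)\), and a weak-\(*\) subsequential limit supplies the required \(\tilde\mu\). The concluding product identity then follows by partitioning \(\FF_d^+\) into \(\{\s(n) : n \in \NN\}\) and its complement inside (v). The most delicate step is the shortlex induction: the predecessor \(k\s - 1\) generically lies in a different branch of \(\FF_d^+\) from \(\s\), unlike the case \(d=1\) where \(k\s - 1 = \s\), but the recurrences supply precisely the combinatorial data needed to close the induction regardless.
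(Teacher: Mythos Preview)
Your proposal is correct and tracks the paper's proof closely: the Constantinescu--Johnson formula \(a_{\s,\s}^{-2} = D_\s/D_{\s-1} = \prod_j(1-|\g_{\s_j\cdots\s_{|\s|}}|^2)\) drives the first cluster in both accounts, the same evaluation of the recurrences \eqref{eqn:recNormal}--\eqref{eqn:recReverseNormal} at the origin drives (v)--(viii), and \Cref{cor:CFDependsOnAC} handles (vi)\('\). Two small points of divergence are worth flagging. For (ix), your sandwich \(\L_n(0;\mu_{\ac}) \leq \mu_{\ac}(Q_n^*Q_n) \leq \mu(Q_n^*Q_n) = \L_n(0;\mu)\) is tighter than the paper's Step~7, which somewhat elides whether the minimiser \(Q_n\) is taken for \(\mu\) or for \(\mu_{\ac}\); your argument makes the squeeze explicit. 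For (x), your one-line ``weak-\(*\) subsequential limit'' glosses over the fact that \(t \notin C_0([0,\infty))\), so first moments need not pass to weak-\(*\) limits; the paper invests real effort here, replacing the spectral measures by point masses supported in a fixed compact interval (obtained from a lower spectral bound for \(T\) off its kernel) so that \(t\) becomes a legitimate test function. Since (x) is purely existential this is not fatal---indeed a Dirac mass at \(\tilde\kappa\) already witnesses it---but if you want \(\tilde\mu\) to carry spectral meaning you will need the paper's compactness workaround or an equivalent tightness argument.
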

\begin{proof}
    The proof is broken into steps.
    \begin{enumerate}[align=left]
        \item[\textbf{Step 1:} \textit{Proof that} (i) = (ii):] This item is straightforward. Since the polynomials \(\P_\s\) are the (monic) results of Gram-Schmidt, and the polynomials \(\p_\s\) are normalisations of those, equating coefficients of \(Z^\s\) we can use
        \[
            \p_\s := \frac{\P_\s}{\norm{\P_\s}}
        \]
        to see that
        \[
            a_{\s,\s} = \frac{1}{\norm{\P_\s}},
        \]
        or in other words, that \(\norm{\P_\s} = a_{\s,\s}^{-1}\) for all \(\s \in \FF_d^+\), and the equality follows.

        \bigskip

        \item[\textbf{Step 2:} \textit{Proof that} (ii) = (iii) = (iv):] In the proof of \cite[Theorem 3.1]{CJ02b} appears the equation (for arbitrary \(\s = \s_1\cdots\s_m \in \FF_d^+\))
        \[
            \frac{1}{a_{\s,\s}^2} = \frac{D_\s}{D_{\s -1}} = \prod_{1 \leq j \leq m} (1 - \lvert \g_{\s_j\cdots\s_m}\rvert^2);
        \]
        taking specifically \(\s = \s(m)\), we have \(\s_j = d\) for \(j = 1, \ldots, m\), so the product on the right-hand side becomes (writing \(j\) copies of \(d\) as \(d^j\))
        \[
            \prod_{1 \leq j \leq m} (1 - \lvert \g_{d^j}\rvert^2) = \prod_{n = 1}^m (1 - \lvert \g_{\s(n)} \rvert^2).
        \]
        The equation from \cite{CJ02b} under these choices is then
        \[
            \frac{1}{a_{\s(m),\s(m)}^2} = \frac{D_{\s(m)}}{D_{\s(m) - 1}} = \prod_{n = 1}^m (1 - \lvert \g_{\s(n)} \rvert^2),
        \]
        and recalling from \cite{CJ02b} that \(\g_{\s(0)} = \g_{\emptyset} = 0\), we can take this product from \(n=0\) without changing the value. Taking the limits as \(m \to \infty\) shows the desired equalities.

        \bigskip
                
        \item[\textbf{Step 3:} \textit{Proof that} (vii) = (v):] Recall the recurrence relation \eqref{eqn:recReverseNormal} for \(\s \in \FF_d^+, k \in \{1, \ldots, d\}\):
        \[
            \p_{k\s}^{\#} = \frac{1}{d_{k\s}}(-\overline{\g_{k\s}}Z_k\p_\s + \p^{\#}_{k\s - 1}),
        \]
        and also that \(\p_\emptyset = \p^{\#}_{\emptyset} = 1\) are constant polynomials. First, notice that 
        \[
            \p^{\#}_{1} = \frac{1}{d_1}( - \overline{\g_1}Z_1\p_{\emptyset} + \p^{\#}_{\emptyset}) = \frac{1 - \overline{g_1}Z_1}{d_1}
        \]
        which gives
        \[
            \p^{\#}_{1}(0^{(d)}_{1 \times 1}) = \frac{1}{d_1} = \frac{1}{d_\emptyset} \frac{1}{d_1},
        \]
        recalling again that \(d_{\emptyset} = 1 - \lvert 0 \rvert^2 = 1\).
        
        With this as our base case, suppose now for some \(\s \in \FF_d^+\) that \(\p^{\#}_\s(0) = \prod_{\t = \emptyset}^{\s}d_\t^{-1}\). The same recurrence relation as above shows us that
        \[
            \p^{\#}_{\s + 1}(0^{(d)}_{1 \times 1}) = \frac{1}{d_{\s+1}} \p^{\#}_{\s}(0^{(d)}_{1 \times 1}) = \frac{1}{d_{\s+1}}\prod_{\t = \emptyset}^{\s}d_\t^{-1} = \prod_{\t = \emptyset}^{\s + 1}d_\t^{-1};
        \]
        by induction on the place of \(\s\) in the shortlex ordering of \(\FF_d^+\) then, 
        \[
            \p^{\#}_\s(0^{(d)}_{1 \times 1}) = \prod_{\t = \emptyset}^{\s}d_\t^{-1} = \prod_{\t = \emptyset}^{\s} \left(\sqrt{1 - \lvert \g_\t \rvert^2}\right)^{-1}
        \]
        for all \(\s \in \FF_d^+\). It is now quick to see that
        \[
            \lvert \p^{\#}_\s(0^{(d)}_{1 \times 1}) \rvert^{-2} = \left \lvert \prod_{\t = \emptyset}^{\s} \left(\sqrt{1 - \lvert \g_\t \rvert^2}\right)^{-1} \right\rvert^{-2} = \prod_{\t = \emptyset}^{\s} (1 - \lvert \g_\t \rvert^2);
        \]
        since this holds for all \(\s \in \FF_d^+\), in particular it holds for \(\s(n)\) for all \(n \in \NN\), and then taking the limit as \(n \to \infty\) completes the proof.

        \bigskip

        \item[\textbf{Step 4:} \textit{Proof that} (viii) = (v):] First, notice that \eqref{eqn:recNormal} implies that
        \[
            \p_\s(0^{(d)}_{1 \times 1}) = \frac{1}{d_\s}(-\g_\s \p^{\#}_{\s - 1}(0^{(d)}_{1\times1})) = \frac{-\g_\s}{d_\s} \prod_{\t = \emptyset}^{\s -1} \frac{1}{d_\t} = -\g_\s \prod_{\t = \emptyset}^{\s} \frac{1}{d_\t},
        \]
        where the second equality is from our proof of ``(vii) = (v)". Next, as a base case, consider that
        \[
            \sum_{\s=\emptyset}^{\s(0)} \lvert \p_\s(0^{(d)}_{1 \times 1}) \rvert^2 = \lvert \p_{\emptyset}(0^{(d)}_{1\times1}) \rvert^2 = 1 = \frac{1}{d_{\emptyset}^2};
        \]
        with this in mind, suppose for some \(\s \in \FF_d^+\) that
        \[
            \sum_{\t=\emptyset}^{\s} \lvert \p_\t(0^{(d)}_{1 \times 1}) \rvert^2 = \prod_{\t = \emptyset}^{\s} \frac{1}{d_\t}^2.
        \]
        Then direct computation shows that
        \begin{align*}
            \sum_{\t=\emptyset}^{\s + 1} \lvert \p_\t(0^{(d)}_{1 \times 1}) \rvert^2 & = \prod_{\t = \emptyset}^{\s} \frac{1}{d_\t}^2 + \lvert \p_{\s + 1}(0^{(d)}_{1 \times 1}) \rvert^2 = \prod_{\t = \emptyset}^{\s} \frac{1}{d_\t}^2 + \lvert\g_{\s + 1}\rvert^2 \prod_{\t = \emptyset}^{\s + 1} \frac{1}{d_\t}^2 \\
            & = \frac{d_{\s + 1}^2 + \lvert \g_{\s + 1} \rvert^2}{\prod_{\t = \emptyset}^{\s + 1} {d_\t}^2} = \frac{1 - \lvert \g_{\s+1} \rvert^2 + \lvert \g_{\s+1} \rvert^2}{\prod_{\t = \emptyset}^{\s + 1} {d_\t}^2} \\
            & = \prod_{\t = \emptyset}^{\s + 1} \frac{1}{d_\t}^2,
        \end{align*}
        so by induction we have
        \[
            \sum_{\t=\emptyset}^{\s} \lvert \p_\t(0^{(d)}_{1 \times 1}) \rvert^2 = \prod_{\t = \emptyset}^{\s} \frac{1}{d_\t}^2 =  \prod_{\t = \emptyset}^{\s} (1 - \lvert \g_\s \rvert^2)^{-1}
        \]
        for all \(\s \in \FF_d^+\). Hence in particular
        \[
            \left(\sum_{\t=\emptyset}^{\s(n)} \lvert \p_\t(0^{(d)}_{1 \times 1}) \rvert^2\right)^{-1} = \prod_{\t = \emptyset}^{\s(n)} (1 - \lvert \g_\s \rvert^2)
        \]
        and taking the limit as \(n \to \infty\) completes this proof.

        \bigskip

        \item[\textbf{Step 5:} \textit{Proof that} (vi) = (viii):] This item also follows almost immediately. Taking \(k = 1\), we have from our definition of \(\L_{n}(\cdot; \mu)\) that
        \[
            \L_{n}(0^{(d)}_{1 \times 1}; \mu) = \left(\sum_{\s = \emptyset}^{\s(n)} \p_\s(0^{(d)}_{1 \times 1}) \overline{\p_\s(0^{(d)}_{1 \times 1})}\right)^{-1} = \left(\sum_{\t=\emptyset}^{\s(n)} \lvert \p_\t(0^{(d)}_{1 \times 1}) \rvert^2\right)^{-1},
        \]
        and once more taking \(n \to \infty\) proves the equality.

        \bigskip

        \item[\textbf{Step 6:} \textit{Proof that} (vi) = (vi)':] This was proved in \Cref{cor:CFDependsOnAC}. 

        \bigskip
        
        \item[\textbf{Step 7:} \textit{Proof that} (vi)' = (ix):] Recall that \(T\) is the nc Radon--Nikodym derivative of \(\mu\) \cite{JM22a}, that is, the (generally unbounded) positive, closed, densely-defined linear operator \(\HH_d^2 \to \HH_d^2\) such that
        \[
            \mu_{ac}(a^*b) = \langle \sqrt{T}b, \sqrt{T}a \rangle_{\HH^2_d}.
        \]
        Since \(T\) is closed, densely-defined, and positive, \(T\) has a unique positive square root \(\sqrt{T}\) with \(\cD(\sqrt{T}) = \cD(T)\), so for \(Q \in \CC\langle Z \rangle\), we have \(\mu_{ac}(Q^*Q) = \langle \sqrt{T}Q, \sqrt{T}Q \rangle_{\HH^2_d} = \langle TQ, Q \rangle_{\HH^2_d}\). Moreover, recall the minimiser \(Q_n\) for \(\min\{\mu(Q^*Q) : Q \in \CC\langle Z \rangle_{\s(n)}, Q(0) = 1\}\) from \Cref{rem:Minimiser}. Combine this with the above to see that
        \begin{align*}
            \inf\{\mu_{ac}(Q^*Q) : Q \in \CC\langle X \langle, Q(0) = 1\} & = \lim_{n\to\infty} \min\{\mu_{ac}(Q^*Q) : Q \in \CC\langle Z \rangle_{\s(n)}, Q(0) = 1\} \\
            & = \lim_{n\to\infty} \mu_{ac}(Q_n^*Q_n) \\
            & = \lim_{n\to\infty} \langle TQ_n, Q_n \rangle_{\HH^2_d}
        \end{align*}
        as claimed.

        \bigskip
        
        \item[\textbf{Step 8:} \textit{Proof that} (ix) = (x):] First, suppose that the Verblunsky coefficients of \(\mu\) are not square-summable, i.e. that \(\sum_{\s \in \FF_d^+} \lvert \g_\s \rvert^2 = +\infty\). Then by \Cref{cor:SquareSummableVCs}, we have that the product \(\prod_{\s\in\FF_d^+}(1-\lvert \g_\s \rvert^2) = 0\). Taking \(\tilde{\mu} = 0\) to be the zero measure we immediately obtain a candidate measure:
        \[
            \prod_{\s \in \FF_d^+} (1 - \lvert \g_\s \rvert^2) = 0 = \int_0^{\infty} t \; \rmd \tilde{\mu} = \EE(\tilde{\mu}),
        \]
        and we are done.
        
        On the other hand, suppose that the Verblunsky coefficients are square-summable, i.e. that the sum \(\sum_{\s \in \FF_d^+} \lvert \g_\s \rvert^2\) is finite. Once again, the operator \(T\) of \cite{JM22a} is densely-defined, closed, and positive, so we can apply the spectral theorem for an unbounded self-adjoint operator (see e.g. \cite[Theorem 5.8]{Sch12}) to obtain a projection-valued spectral measure \(E\) on the spectrum \(\s(T)\), such that \(E(\s(T)) = I_{\HH_d^2}\). This measure will satisfy
        \[
            T = \int_{\RR^+} t \; \rmd E(t)
        \]
        and --- to us more importantly ---
        \[
            \ip{Tf}{g}_{\HH^2_d} = \int_{\RR^+} t \; \rmd \ip{E(t)f}{g}_{\HH^2_d},
        \]
        where \(\ip{E(\cdot)f}{g}_{\HH^2_d}\) is a positive scalar measure on \(\s(T)\) determined by \(f,g \in \HH^2_d\). Following on from the previous item, then, we see that
        \[
            \L_{\infty}(0_{1 \times 1}^{(d)}; \mu) = \lim_{n\to\infty} \int_{\RR^+} t \; \rmd \ip{E(t)Q_n}{Q_n}_{\HH^2_d}
        \]
        where \((\ip{E(\cdot)Q_n}{Q_n}_{\HH^2_d})_{n\in\NN}\) is a sequence of positive measures on the positive half-line. For ease of notation we shall write \(\nu_n\) in place of \(\ip{E(\cdot)Q_n}{Q_n}_{\HH^2_d}\).

        For \(j = 0,1\) and \(n \in \NN\), define the moments
        \[
            a_j^{(n)} := \int_0^{\infty} t^{j} \; \rmd \nu_n,
        \]
        so that \(\lim_n a_1^{(n)} = \lim_n \mathbb{E}(\nu_n) = \L_{\infty}(0; \mu)\) and
        \[
            a_0^{(n)} = \int_0^{\infty} \rmd \nu_n = \langle E(\s(T))Q_n, Q_n \rangle_{\HH_d^2} = \langle Q_n, Q_n \rangle_{\HH_d^2} = \norm{Q_n}^2_{\HH_d^2}.
        \]
        
        We show that this cannot have a limit of zero. Suppose to the contrary that \(\lim_n \norm{Q_n}^2_{\HH_d^2} = 0\), and notice that \(\norm{\sqrt{T}Q_n}_{\HH_d^2} = \langle \sqrt{T}Q_n, \sqrt{T}Q_n\rangle_{\HH_d^2} = \langle TQ_n, Q_n \rangle_{\HH_d^2}\) converges, so \((\sqrt{T}Q_n)_n\) converges in \(\HH_d^2\). Then \cite[Proposition 1.4]{Sch12} says that 
        \[
            0 = \sqrt{T}(0) = \sqrt{T}\left(\lim_n Q_{n}\right) = \lim_n \sqrt{T}Q_{n}
        \]
        and, iterating this calculation, that
        \[
            0 = \sqrt{T}(0) = \sqrt{T}\left(\lim_n \sqrt{T}Q_{n}\right) = \lim_n TQ_{n}.
        \]
        Thus \(\L_{\infty}(0; \mu) = \lim_n \langle TQ_{n}, Q_{n} \rangle_{\HH_d^2} = 0\), but by \Cref{cor:SquareSummableVCs}, when \(\sum_{\s} \lvert \g_{\s} \rvert^2 < +\infty\) this is not the case. By contradiction we thus demonstrate that \(\lim_n a_0^{(n)} \neq 0\) and hence that \(\inf_n \{a_0^{(n)}\} > 0\).

        By \cite[Theorem 1.8]{Sch12}, the closure of \(T\) is \(T\) itself; Equation (1.6) of the same source then means that we can decompose \(\HH_d^2\) as
        \[
            \HH_d^2 = \overline{\mathrm{Ran}(T)} \oplus \ker(T).
        \]
        The operator \(T\) is then unitarily equivalent to the operator matrix \(\begin{bmatrix}\tilde{T} & 0 \\ 0 & 0\end{bmatrix}\), where \(\tilde{T} : \overline{\rm{Ran}(T)} \to \overline{\rm{Ran}(T)}\) is positive, closed, and densely-defined.

        Notice that, if \(Q_n \in \ker(T)\) for some \(n \in \NN\), then for \(m \geq n\), the set we minimise over for \(\L_{m}(0; \mu)\) also contains \(Q_n\), and hence \(\L_{m}(0; \mu) = 0\) for all \(m \geq n\) and so too does \(\L_{\infty}(0; \mu)\). As this would violate \Cref{cor:SquareSummableVCs}, we must have that \(Q_n \not\in \ker( T)\) for all \(n \in \NN\). Considering our decomposition of \(\HH_d^2\), this means that \(Q_n \in \overline{\rm{Ran}(T)}\) and its component in \(\ker(T)\) is zero, i.e. \(Q_n\) is written as the vector \(\begin{bmatrix}Q_n \\ 0\end{bmatrix}\).

        Now,  \(\ker(\tilde{T}) = \{0\}\) so there exists some \(\a > 0\) such that \(\s(\tilde{T}) \subseteq [\a, \infty)\), and the spectral theorem applied to \(\tilde{T}\) provides us with a spectral measure \(\tilde{E}\) on \([\a,\infty)\). The integral form of \(\tilde{T}\) then implies
        \begin{align*}
            \langle TQ_n, Q_n \rangle_{\HH_d^2} & = \left\langle \begin{bmatrix}\tilde{T} & 0 \\ 0 & 0\end{bmatrix} \begin{bmatrix}Q_n \\ 0\end{bmatrix}, \begin{bmatrix}Q_n \\ 0\end{bmatrix} \right\rangle_{\HH_d^2} \\
            & = \langle \tilde{T}Q_n,Q_n \rangle_{\overline{\rm{Ran}(T)}} \\
            & = \int_\a^{\infty} t \; \rmd \langle \tilde{E}(t) Q_n, Q_n \rangle_{\overline{\rm{Ran}(T)}} \\
            & \geq \int_\a^\infty \a \; \rmd \langle \tilde{E}(t) Q_n, Q_n \rangle_{\overline{\rm{Ran}(T)}} \\
            & = \a \int_\a^\infty 1 \; \rmd \langle \tilde{E}(t) Q_n, Q_n \rangle_{\overline{\rm{Ran}(T)}} \\
            & = \a \; \rmd \langle \tilde{E}(\s(\tilde{T}))Q_n, Q_n \rangle_{\overline{\rm{Ran}(T)}} \\
            & = \a \langle Q_n, Q_n \rangle_{\overline{\rm{Ran}(T)}} \\
            & = \a \langle  \begin{bmatrix}Q_n \\ 0\end{bmatrix}, \begin{bmatrix}Q_n \\ 0\end{bmatrix} \rangle_{\HH_d^2} \\
            & = \a \langle Q_n, Q_n \rangle_{\HH_d^2} \\
            & = \a a_0^{(n)},
        \end{align*}
        i.e. 
        \[
            a_0^{(n)} \leq \frac{1}{\a} \langle TQ_n, Q_n \rangle_{\HH_d^2}.
        \]

        We have already seen that the right hand side converges, and so the left hand side cannot diverge to \(+\infty\), i.e. \(\sup_n a_0^{(n)} < \infty\). Since \((a_1^{(n)})_{n\in \NN}\) is convergent, \((a_1^{(n)})_{n\in \NN}\), too, is bounded. 

        We thus have for all \(n \in \NN\) that
        \[
            \frac{a_1^{(n)}}{a_0^{(n)}} \in \left[\frac{\inf_n a^{(n)}_1}{\sup_n a_0^{(n)}}, \frac{\sup_n a^{(n)}_1}{\inf_n a_0^{(n)}}\right],
        \]
        i.e. the sequence \((\frac{a_1^{(n)}}{a_0^{(n)}})_{n \in \NN}\) lies inside a compact set.

        Define now a sequence of point measures
        \[
            \mu_n := a_0^{(n)}\d_{\frac{a_1^{(n)}}{a_0^{(n)}}}.
        \]
        By construction, the zeroth and first moments of \(\mu_n\) will be, respectively, \(a_0^{(n)}\) and \(a_1^{(n)}\), so that \(\mu_n\) is a representing measure for the truncated moment sequence \((a_0^{(n)}, a_1^{(n)})\).

        Recall that the space of continuous functions on \(\RR^+\) which vanish at infinity, \(C_0(\RR^+)\), is a separable Banach space, by e.g. the same argument as \cite{Sto01} with \(F = \RR^+\). The Riesz--Markov--Kakutani representation theorem (see e.g. \cite[Theorem 2.17]{Rud87}) provides a bijection which allows us to obtain from each \(\mu_n\) a functional on this space, \(\hat{\mu}_n\). Explicitly, this functional is given by (for \(f \in C_0(\RR^+)\))
        \[
            \hat{\mu}_n(f) = \int_{\RR^+} f \; \rmd \mu_n = a_0^{(n)}f\left(\frac{a_1^{(n)}}{a_0^{(n)}}\right).
        \]
        We can compute the norms of these functionals:
        \[
            \norm{\hat{\mu}_n} = \sup_{f \in B_1(C_0(\RR^+))} \lvert \hat{\mu}_n(f) \rvert = \sup_{f \in B_1(C_0(\RR^+))} a_0^{(n)} \left\lvert f\left(\frac{a_1^{(n)}}{a_0^{(n)}}\right)\right\rvert,
        \]
        and if \(f \in  B_1(C_0(\RR^+))\), then
        \[
            \left\lvert f\left(\frac{a_1^{(n)}}{a_0^{(n)}}\right)\right\rvert \leq \sup_{t \in \RR^+} \lvert f(t) \rvert = \norm{f}_{\infty} \leq 1,
        \]
        showing \(\norm{\hat{\mu}_n} \leq a_0^{(n)}\). The reverse inequality is clear, e.g. via constructing a piecewise-linear function in \(C_0(\RR^+)\) with \(f\left(\frac{a_1^{(n)}}{a_0^{(n)}}\right) = 1\), and so in fact
        \[
            \norm{\hat{\mu}_n} = a_0^{(n)}.
        \]
        We have already shown that this quantity is bounded above; if we let some upper bound be \(M > 0\), it will follow that for all \(n \in \NN\), \(\hat{\mu}_n \in MB_1(C_0(\RR^+)^*)\).

        Since \(C_0(\RR^+)\) is a Banach space, \cite[V.3.1]{Con90} gives us that \(MB_1(C_0(\RR^+)^*)\) is weak-* compact. Since \(C_0(\RR^+)\) is, additionally, separable, \cite[V.5.1]{Con90} tells us that \(MB_1(C_0(\RR^+)^*)\) is weak-* metrisable. It follows that there exists a subsequence of functionals \((\hat{\mu}_{n_j})_{j=0}^{\infty}\) that is weak-* convergent to some functional, say \(\psi\), in  \(MB_1(C_0(\RR^+)^*)\), and moreover, this functional is positive: if \(f \geq 0\) then \(\psi(f) = \lim_{j \to \infty} \hat{\mu}_{n_j}(f) \geq 0 \) as a limit of non-negative elements. One final time, then, by the Riesz--Markov--Kakutani representation theorem we obtain a positive finite regular Borel measure \(\tilde{\mu}\) on \(\RR^+\) such that \(\psi(f) = \int_{\RR^+} f \; \rmd \tilde{\mu}\).

        Let \(f(t) = t\). The expectation of this measure is given by
        \begin{multline*}
            \EE(\tilde{\mu}) = \int_{\RR^+} t \; \rmd \tilde{\mu}(t) = \psi(f) = \lim_j \hat{\mu}_{n_j}(f) = \lim_j \EE(\mu_j) \\ = \lim_j a_1^{(n_j)} = \lim_j \langle TQ_{n_j}, Q_{n_j} \rangle_{\HH_d^2} = \lim_n \langle TQ_{n}, Q_{n} \rangle_{\HH_d^2} = \prod_{\s\in\FF+d^+}(1 - \lvert \g_\s \rvert^2),
        \end{multline*}
        which completes the proof and our list.
    \end{enumerate}

    Finally, the relation between \(\k\) and \(\tilde{\k}\) is clear from observing the forms of (iii) and (v).

\end{proof}

\begin{remark}
    We remark that many of the items on the list of \Cref{thm:NCSzego} are equal, not only in the limit, but also pointwise --- for example, we see in the proof of ``(i) = (ii)" that \(\norm{\P_\s} = a_{\s,\s}^{-1}\) for all \(\s \in \FF_d^+\); however, we state the theorem in the form above both to emphasise the parallel with \Cref{thm:CommSzego} as well as to allow for items such as (vi) and (x) which are not expressed here as limits of other discussed quantities.
\end{remark}

\begin{remark}
    \label{rem:Popescu06}
    There is a clearly-defined relationship between the quantity \(\L_{\infty}(0_{1\times1}^{(d)}; \mu)\), i.e. the second list of \Cref{thm:NCSzego}, and the \emph{prediction entropy} of a multi-Toeplitz operator on the full Fock space discussed in \cite{Pop06}. Recall that the full Fock space on \(d\) generators \(F^2(\CC^d)\) can be identified with the nc Hardy space \(\HH^2_d\) via the isometry
    \[
        e_{\s_1} \otimes \cdots \otimes e_{\s_n} \mapsto Z_{\s_1} \cdots Z_{\s_n} = Z^{\s_1 \cdots \s_n}.
    \]
    We consider the setting of \cite{Pop06} of a multi-Toeplitz operator on \(F^2(\CC^d) \otimes \mathcal{E}\) when the auxiliary Hilbert space \(\mathcal{E} = \CC\).
    
    Let \(T\) be a multi-Toeplitz operator on \(\HH_d^2\) and let \(\mu \in \ncm{d}\) be any nc measure with nc Radon--Nikodym derivative \(T\). Popescu \cite{Pop06} defines the prediction entropy of \(T\) to be 
    \[
        e(T) = \log \det \Delta_T
    \]
    where
    \[
        \langle \Delta_T x, x\rangle = \inf\{\langle T(x-p), x-p \rangle : p \in \CC \langle z_1, \ldots, z_d \langle, p(0) = 0\},
    \]
    see Equation (1.1) of \cite{Pop06}.
    It follows that
    \begin{align*}
        \langle \D_T \mathbf{1}, \mathbf{1} \rangle & = \inf\{\langle T(1-p), 1-p \rangle : p \in \CC \langle z_1, \ldots, z_d \langle, p(0) = 0\} \\ 
        & = \inf\{\langle Tq, q \rangle : q \in \CC \langle z_1, \ldots, z_d \langle, q(0) = 1\} \\
        & = \L_{\infty}(0_{1\times1}^{(d)}; \mu).
    \end{align*}

    Finally, we remark that Popescu notes in \cite{Pop06} that this prediction entropy is distinct from the entropy of a multi-Toeplitz kernel considered in the weak Szeg\H{o} limit theorem of \cite{Pop01}.
\end{remark}

\begin{remark}
    Our analogue of Szeg\H{o}'s theorem here comes in two flavours. The first is that the equality (iii) = (iv) is a noncommutative weak Szeg\H{o} limit theorem, in a similar form to that of \cite{Pop01}; the differences between our setting and that one were discussed in \Cref{rem:PopescuWeakSzego}.

    The second is that the equality (v) = (ix) --- or indeed (v) = (vi)' --- shows that \(\prod_{\s \in \FF_d^+} (1 - \lvert \g_\s \rvert^2)\) depends only upon the absolutely continuous part of \(\mu\), precisely as in the case \(d = 1\), and this provides a candidate replacement for the Szeg\H{o} condition \eqref{eqn:SzegoCondition} in \(d\) noncommuting variables: namely,
    \[
        \sum_{\s \in \FF_d^+} \lvert \g_\s \rvert^2 < \infty \quad \text{ if and only if } \quad \log \L_{\infty}(0^{(d)}_{1 \times 1}; \mu) > - \infty.
    \]
\end{remark}

\section{Zeros of Orthonormal Polynomials}

Section 1.7 of \cite{Sim05} studies the location of the zeros of the monic orthogonal polynomials \((\P_n)_{n=0}^{\infty}\) associated to a measure \(\mu\) with an aim towards proving \emph{Verblunsky's theorem}, that if a sequence of complex numbers lies entirely within the open unit disc then there exists a unique measure on the unit circle with that sequence as its Verblunsky coefficients. In particular, Simon provides the following result \cite[Theorem 1.7.1]{Sim05}:
\begin{theorem}[Zeros Theorem]
    Let \(\P_n (z)\) be an OPUC polynomial. Then all the zeros of \(\P_n\) lie in \(\DD\).
\end{theorem}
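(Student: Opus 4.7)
The plan is to combine the defining orthogonality property of $\P_n$ with a simple geometric fact about convex combinations of points on the unit circle. Suppose $z_0 \in \CC$ is a zero of $\P_n$, and factor $\P_n(z) = (z - z_0) Q(z)$ where $Q$ is a monic polynomial of degree $n-1$. Since $\P_n$ is orthogonal in $L^2(\mu)$ to every polynomial of degree strictly less than $n$ (this is built into the Gram--Schmidt construction), we have in particular $\langle \P_n, Q \rangle_\mu = 0$.

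Expanding this inner product gives
\[
    0 = \int_0^{2\pi} (e^{i\th} - z_0) \lvert Q(e^{i\th}) \rvert^2 \, \rmd \mu(\th),
\]
or, rearranging,
\[
    z_0 = \frac{\int_0^{2\pi} e^{i\th} \lvert Q(e^{i\th}) \rvert^2 \, \rmd \mu(\th)}{\int_0^{2\pi} \lvert Q(e^{i\th}) \rvert^2 \, \rmd \mu(\th)}.
\]
The denominator is strictly positive because $Q$ is a non-zero polynomial with only finitely many zeros, whereas $\mu$ is assumed non-trivial (not finitely atomic). The right-hand side then expresses $z_0$ as a weighted average of points of $\TT \subseteq \overline{\DD}$, from which $\lvert z_0 \rvert \leq 1$ follows immediately.

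The delicate step is to upgrade this to the strict inequality $\lvert z_0 \rvert < 1$, and this is the main obstacle, since the naive ``swap to the reflected zero $1/\overline{z_0}$'' argument based on the minimising property of $\P_n$ yields equality of norms when $\lvert z_0 \rvert = 1$ and so fails to close. I would instead invoke the fact that every point of $\TT$ is an extreme point of the closed convex set $\overline{\DD}$: if $\lvert z_0 \rvert = 1$, the probability measure
\[
    \frac{\lvert Q(e^{i\th}) \rvert^2 \, \rmd\mu(\th)}{\int_0^{2\pi} \lvert Q(e^{i\th}) \rvert^2 \, \rmd \mu(\th)}
\]
on $\TT$ would be forced to be the Dirac mass at $z_0$. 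This means the support of $\mu$ is contained in $\{z_0\} \cup \{\textrm{zeros of } Q\}$, a finite set, contradicting the non-triviality of $\mu$. Hence $\lvert z_0 \rvert < 1$, and the argument is complete.
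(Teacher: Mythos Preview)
Your argument is correct. The factorisation-and-orthogonality step gives the barycentre representation of $z_0$, the non-triviality of $\mu$ makes the denominator positive, and the extreme-point argument for the boundary case is valid: if $\int_{\TT} w\,\rmd\nu(w)=z_0\in\TT$ for a probability measure $\nu$ on $\TT$, then $\int_{\TT}\Re(\overline{z_0}w)\,\rmd\nu(w)=1$ with integrand $\leq 1$, forcing $\nu=\d_{z_0}$ and hence $\mu$ finitely atomic, a contradiction.

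This is, however, a genuinely different route from the one the paper follows. The paper does not prove the classical Zeros Theorem directly but cites \cite{Sim05}, singling out the \textsc{Third Proof} there, and then builds its noncommutative analogue (\Cref{thm:NCZerosThm}) on the same idea: the Christoffel--Darboux-type identity coming from the recurrence relations, which in one variable reads
\[
    \lvert \p_n^*(z)\rvert^2 - \lvert \p_n(z)\rvert^2 = (1-\lvert z\rvert^2)\sum_{k=0}^{n-1}\lvert \p_k(z)\rvert^2,
\]
so that $\p_n^*$ is nonvanishing on $\DD$ and $\p_n$ is nonvanishing on $\CC\setminus\overline{\DD}$, with a separate limiting argument on $\TT$. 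Your approach is the barycentre proof (essentially Fej\'er's, also among Simon's six). It is more elementary and uses only orthogonality and the convex geometry of $\overline{\DD}$; on the other hand, it gives nothing directly about $\p_n^*$, and---more to the point for this paper---it does not generalise readily to the row-ball, where there is no obvious ``convex combination of boundary points'' interpretation for tuples of matrices. The Christoffel--Darboux route, being purely an algebraic consequence of the recurrences, is what survives in the noncommutative setting.
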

\begin{remark}
    \label{rem:CommZeros}
    In the proof of this result named \textsc{Third Proof of Theorem 1.7.1}, \cite{Sim05} also proves that the reverse polynomials, there called \(\P_n^*(z)\), are non-vanishing on \(\overline{\DD}\). In other words, Simon shows that if \(z \in \DD\) then \(\P_n^*(z) \neq 0\), and if \(z \in \CC\setminus\DD\), then \(\P_n(z) \neq 0\). It follows that if \(z \in \TT\) then both \(\P_n(z)\) and \(\P_n^*(z)\) are nonzero.
\end{remark}
Indeed, \cite{Sim05} provides six proofs of Theorem 1.7.1, with an historical discussion of the providence of the different ideas involved in each at the end of the section --- we direct the interested reader there for more details.

In the noncommutative setting, the Favard's theorem of \cite{CJ02b} provides the analogous result to Verblunsky's theorem --- see \Cref{thm:NCVerblunsky} for this theorem in terms of nc measures. Nevertheless, the location of the zeros of the monic orthogonal polynomials --- and of the orthonormal polynomials, as \(\p_\s\) and \(\P_\s\) are proportional by definition --- associated to an nc measure remains an interesting question, which we study in this section.

An immediate question presents itself: what is the appropriate notion of ``zero" of a multivariate nc polynomial? The most immediate candidate may be to define a zero of an nc polynomial \(P \in \CC\langle Z \rangle\) as a tuple of matrices \(A\) lying in either the row-ball or the entire nc space \(\CC^d_{\nc} := \dju (\CC^{n\times n})^d\) such that \(P(A) = 0\).

The notion of zero sets of nc polynomials appears already in the literature, in the burgeoning field of noncommutative real algebraic geometry. One such consideration is in \cite{HKV22}, which discusses noncommutative generalisations of Hilbert's Nullstellensatz, and where tuples \(A\) such that \(P(A) = 0\) are called \emph{hard zeros} of \(P\); also in that paper we find the notion of \emph{determinantal zeros}, which shall prove the more useful concept for our study. In \cite{HKV22} we find the following definition.

\begin{definition}
    \label{def:DeterminantalZero}
    Let \(P\in \CC\langle Z \rangle\) be a noncommutative polynomial. A \emph{determinantal zero} of \(P\) is a tuple \(Z \in (\CC^{n\times n})^d\), for any \(n\), such that \(\det P(Z) = 0\). The set of all determinantal zeros of \(P\) is denoted by \(\cZ(P)\) and is given by
    \[
        \cZ(P) = \dju \cZ_n(P) \quad \text{ where } \quad \cZ_n(P) := \left\{Z \in (\CC^{n\times n})^d : \det P(Z) = 0\right\}.
    \]
\end{definition}

\begin{remark}
    As discussed in Section 4, \cite{CJ02b} formulates the same Christoffel-Darboux kernel as we defined above, though we note that (when \(\cH = \CC^n\)) the kernel of \cite{CJ02b} is the restriction of ours to the \(n^{\text{th}}\) level of the row-ball. We recall that \cite[Theorem 4.2]{CJ02b} is an analogue of the Christoffel-Darboux formula for these kernels; though we do not use this formula directly, we shall use the calculations involved in the proof of this result as motivation for our own.
\end{remark}

\begin{remark}
    Though \cite{Sim05} works primarily in terms of the monic orthogonal polynomials, since the orthonormal polynomials are simply rescalings of the monic orthogonal polynomials we have that \(\P_n(z) = 0\) if and only if \(\p_n(z) = 0\) and one can easily rephrase everything in terms of the orthonormal polynomials. On the other hand, the theory we wish to exploit in the noncommutative setting of \cite{CJ02b} is entirely in terms of the orthonormal polynomials \((\p_\s)_{\s \in \FF_d^+}\), so in this section we choose to work with \((\p_\s)_{\s \in \FF_d^+}\) rather than \((\P_\s)_{\s \in \FF_d^+}\) as one might expect from a direct analogy.

    Furthermore, the calculations motivating our starting point involve summing over all words \(\s \in \FF_d^+\) of a fixed length \(n\); as such, the results on \(\p_\s\) and \(\p_\s^{\#}\) we obtain are not for arbitrary \(\s \in \FF_d^+\) but rather for the shortlex-final words of arbitrary length, \((\s(n))_{n=0}^{\infty}\).
\end{remark}

Let \(Z, W \in (\CC^{m\times m})^d\) for some \(m \in \NN\), \(\s \in \FF_d^+\), and \(k \in \{1, \ldots, d\}\). Motivated by a calculation in the proof of \cite[Theorem 4.2]{CJ02b}, the recurrence relations for orthonormal polynomials (\eqref{eqn:recNormal} and \eqref{eqn:recReverseNormal}) provide us with
\begin{align*}
    \p^\#_{k\s}(Z)^*\p^\#_{k\s}(W) - \p_{k\s}(Z)^*\p_{k\s}(W) = \p_{k\s - 1}^\#(Z)^*\p^\#_{k\s - 1}(W) - \p_\s(Z)^* Z_k^* W_k \p_\s(W)
\end{align*}
after some elementary algebra. Following the motivating calculation of \cite{CJ02b}, if we sum these equations (which are distinct for each \(\s \in \FF_d^+\)) over \(k = 1,\ldots, d\) and \(0 \leq \lvert \s \rvert \leq n-1\) we form the equality
\begin{equation} 
    \label{eqn:SummationFormula}
    \p_{\s(n)}^\#(Z)^* \p_{\s(n)}^\#(W) - \sum_{0 \leq \lvert \s \rvert \leq n} \p_\s(Z)^*\p_\s(W) = - \sum_{k=1}^d \sum_{0 \leq \lvert \s \rvert \leq n-1} \p_\s(Z)^* Z_k^* W_k \p_\s(W).
\end{equation}
for each \(n \in \NN\).

We are now in a position to discuss the locations of certain determinantal zeros.

\begin{lemma}
    \label{lem:ZerosOutside}
    Let \(\mu \in \ncm{d}\) be a non-trivial probability nc measure with associated orthonormal polynomials \((\p_\s)_{\s \in \FF_d^+}\). For any \(n \in \NN\), if \(Z \in \CC_\nc^d\) is such that \(\det \p_{\s(n)}^\#(Z^*) = 0\), then \(Z\) lies outside the row-ball.
\end{lemma}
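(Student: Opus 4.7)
The plan is to establish the contrapositive: assume \(Z \in \BB^d_\nc\) and show that \(\p^\#_{\s(n)}(Z^*)\) is invertible, which precludes \(\det \p^\#_{\s(n)}(Z^*) = 0\). The essential tool is the identity \eqref{eqn:SummationFormula}, specialised by substituting \(Z \mapsto Z^*\) and \(W \mapsto Z^*\). Since \((Z^*)_k = Z_k^*\) and \((Z^*)_k^* = Z_k\), the factor \(Z_k^* W_k\) on the right rearranges into \(Z_k Z_k^*\), producing
\[
    \p^\#_{\s(n)}(Z^*)^*\,\p^\#_{\s(n)}(Z^*) = \sum_{0 \leq \lvert\s\rvert \leq n} \p_\s(Z^*)^* \p_\s(Z^*) - \sum_{k=1}^d \sum_{0 \leq \lvert\s\rvert \leq n-1} \p_\s(Z^*)^* Z_k Z_k^* \p_\s(Z^*).
\]

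Next, suppose for contradiction that some nonzero vector \(v\) lies in \(\ker \p^\#_{\s(n)}(Z^*)\), and pair this identity against \(v\) on both sides. The left-hand side becomes \(\norm{\p^\#_{\s(n)}(Z^*) v}^2 = 0\), while the right-hand side reads
\[
    \sum_{0 \leq \lvert\s\rvert \leq n} \norm{\p_\s(Z^*) v}^2 - \sum_{k=1}^d \sum_{0 \leq \lvert\s\rvert \leq n-1} \norm{Z_k^* \p_\s(Z^*) v}^2.
\]
The hypothesis \(Z \in \BB^d_\nc\) supplies some \(r < 1\) with \(\norm{\sum_{k} Z_k Z_k^*} \leq r\), and hence \(\sum_k \norm{Z_k^* \eta}^2 \leq r\norm{\eta}^2\) for every vector \(\eta\). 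Applying this termwise with \(\eta = \p_\s(Z^*)v\), enlarging the outer index range from \(\lvert\s\rvert \leq n-1\) to \(\lvert\s\rvert \leq n\), and rearranging the resulting inequality yields
\[
    0 \geq (1-r) \sum_{0 \leq \lvert\s\rvert \leq n} \norm{\p_\s(Z^*) v}^2,
\]
so that \(\p_\s(Z^*) v = 0\) for every \(\s\) in that range; specialising to \(\s = \emptyset\) and using \(\p_{\emptyset} \equiv 1\) forces \(v = 0\), a contradiction.

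The only non-routine step is the bookkeeping of conjugations under the double substitution \(Z, W \mapsto Z^*\): one needs the row-ball condition, which controls \(\sum_k Z_k Z_k^*\), to match exactly the sesquilinear form appearing on the right-hand side of \eqref{eqn:SummationFormula} after substitution. Once that matching is in place, the argument reduces to the familiar principle that a sum of non-negative terms which is bounded above by a strictly smaller multiple of itself must vanish term-by-term, and the base term \(\p_\emptyset(Z^*)v = v\) then delivers the required injectivity (and hence invertibility, as the polynomial is evaluated at a square matrix).
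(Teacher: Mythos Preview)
Your proof is correct and follows essentially the same approach as the paper: both specialise \eqref{eqn:SummationFormula} with \(Z\) and \(W\) replaced by \(Z^*\) and then use the row-ball condition \(\sum_k Z_k Z_k^* < I\) to force invertibility of \(\p^\#_{\s(n)}(Z^*)\). The only cosmetic difference is that the paper argues at the matrix level --- rewriting the right-hand side as \((I-\sum_k Z_kZ_k^*)\) plus positive semidefinite terms to conclude \(\p^\#_{\s(n)}(Z^*)^*\p^\#_{\s(n)}(Z^*)\) is positive definite outright --- whereas you pair against a hypothetical kernel vector and use a norm estimate; these are equivalent ways of extracting the same positivity.
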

\begin{proof}
    Let \(Z \in \BB_\nc^d\). Setting both \(Z\) and \(W\) in the above to be \(Z^*\), \eqref{eqn:SummationFormula} becomes
    \[
        \p_{\s(n)}^\#(Z^*)^* \p_{\s(n)}^\#(Z^*) - \sum_{0 \leq \lvert \s \rvert \leq n} \p_\s(Z^*)^*\p_\s(Z^*) = - \sum_{k=1}^d \sum_{0 \leq \lvert \s \rvert \leq n-1} \p_\s(Z^*)^* Z_k^* W_k \p_\s(Z^*).
    \]
    We can combine the sums on left and right over \(\lvert \s \rvert = 1, \ldots, n-1\), leaving the terms where \(\s = \emptyset\) and \(\lvert \s \rvert = n\); recalling that \(\p_{\emptyset} = 1\), the result of this manipulation is
    \begin{multline*}
        \p_{\s(n)}^\#(Z^*)^* \p_{\s(n)}^\#(Z^*) = \sum_{\lvert \s \rvert = n} \p_\s(Z^*)^* \p_\s(Z^*) + (I - Z_1Z_1^* - \cdots - Z_dZ_d^*)  \\ + \sum_{1 \leq \lvert \s \rvert \leq n-1} \p_\s(Z^*)^*(I - Z_1Z_2^* - \cdots - Z_dZ_d^*)\p_\s(Z^*).
    \end{multline*}
    Now, \(Z\) is in the row-ball, so \(Z_1Z_1^* + \cdots + Z_dZ_d^* < I\) and hence \(I - Z_1Z_1^* - \cdots - Z_dZ_d^*\) is a positive definite matrix; meanwhile, both
    \[
        \sum_{\lvert \s \rvert = n} \p_\s(Z^*)^* \p_\s(Z^*) \quad  \text{ and } \quad \sum_{1 \leq \lvert \s \rvert \leq n-1} \p_\s(Z^*)^*(I - Z_1Z_2^* - \cdots - Z_dZ_d^*)\p_\s(Z^*)
    \]
    are sums of positive semi-definite matrices, so are themselves positive semi-definite. It follows that \(\p_{\s(n)}^\#(Z^*)^* \p_{\s(n)}^\#(Z^*)\) is positive definite, and therefore has a positive determinant on the row-ball. Finally,
    \[
        \det \p_{\s(n)}^\#(Z^*)^* \p_{\s(n)}^\#(Z^*) = \overline{\det \p_{\s(n)}^\#(Z^*)}\det \p_{\s(n)}^\#(Z^*) = \lvert \det \p_{\s(n)}^\#(Z^*) \rvert^2 > 0
    \]
    so that
    \[
        \det \p_{\s(n)}^\#(Z^*) > 0,
    \]
    and hence any determinantal zero must lie outside the row-ball.
\end{proof}

\begin{remark}
    In one variable, \(\p_{\s(n)}^\#(Z^*)\) is simply \(\p_n^*(\overline{z})\) (where \(\p_n^*\) is the reverse polynomial of the \(n^{\mathrm{th}}\) orthonormal polynomial \(\p_n\)), and in that situation this polynomial has no zeros inside the unit disk. Indeed, this map's zeros are inside the unit disk if and only if those of \(\p_n^*\) are, as \(\lvert \overline{z} \rvert = \lvert z \rvert\); \Cref{lem:ZerosOutside} is then our noncommutative analogue of the statement that reverse polynomials have no zeros inside the disk, with \(\p_{\s(n)}^\#(Z^*)^* \p_{\s(n)}^\#(Z^*)\) taking the role of the modulus-squared of \(\p_n^*\) in our investigation of determinantal zeros.
\end{remark}

Our next consideration is a quantity that reduces to the (modulus-squared of the) \(n^{\mathrm{th}}\) orthonormal polynomial in one variable, and we shall see that it has no zeros \emph{outside} the row-ball; however, to obtain our result we must use a more specialised notion of ``outside the row-ball" than simply \(\CC_\nc^d \setminus \overline{\BB}_\nc^d\).

\begin{lemma}
    \label{lem:ZerosInside}
    Let \(\mu \in \ncm{d}\) be a non-trivial probability nc measure with associated orthonormal polynomials \((\p_\s)_{\s \in \FF_d^+}\). For any \(n \in \NN\), if \(Z \in \CC_\nc^d\) is such that \(\det \sum_{\lvert \s \rvert = n} \p_\s(Z^*)^* \p_\s(Z^*) = 0\), then \(Z\) lies outside the set
    \[
        \mathrm{Ext}(\overline{\BB}^d_{\nc}) := \dju \left\{(Z_1, \ldots, Z_d) \in (\CC^{n \times n})^d : Z_1Z_1^* + \cdots + Z_dZ_d^* > I_n \right\}.
    \]
\end{lemma}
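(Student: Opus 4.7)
The plan is to prove the contrapositive: if $Z$ lies in $\mathrm{Ext}(\overline{\BB}_\nc^d)$, then the matrix $\sum_{\lvert \s \rvert = n} \p_\s(Z^*)^* \p_\s(Z^*)$ is positive definite and hence has nonzero determinant. The strategy parallels that of \Cref{lem:ZerosOutside}, the difference being that we rearrange the identity \eqref{eqn:SummationFormula} to isolate a different term.

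Concretely, I would start from \eqref{eqn:SummationFormula} and set $Z = W = Z^*$ exactly as before; the right-hand side then becomes $-\sum_{0 \leq \lvert \s \rvert \leq n-1} \p_\s(Z^*)^* (Z_1 Z_1^* + \cdots + Z_d Z_d^*) \p_\s(Z^*)$. Rather than solving for $\p^\#_{\s(n)}(Z^*)^* \p^\#_{\s(n)}(Z^*)$, I would solve for $\sum_{\lvert \s \rvert = n} \p_\s(Z^*)^* \p_\s(Z^*)$. Splitting the sum $\sum_{0 \leq \lvert \s \rvert \leq n}$ into the $\emptyset$, $1 \leq \lvert \s \rvert \leq n-1$, and $\lvert \s \rvert = n$ pieces, and using $\p_{\emptyset} = 1$, this rearrangement yields
\begin{align*}
    \sum_{\lvert \s \rvert = n} \p_\s(Z^*)^* \p_\s(Z^*)
    &= \bigl(Z_1Z_1^* + \cdots + Z_dZ_d^* - I\bigr) + \p^\#_{\s(n)}(Z^*)^* \p^\#_{\s(n)}(Z^*) \\
    &\quad + \sum_{1 \leq \lvert \s \rvert \leq n-1} \p_\s(Z^*)^* \bigl(Z_1Z_1^* + \cdots + Z_dZ_d^* - I\bigr)\p_\s(Z^*).
\end{align*}

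Now, if $Z \in \mathrm{Ext}(\overline{\BB}_\nc^d)$, then $Z_1Z_1^* + \cdots + Z_dZ_d^* - I$ is positive definite, so the first summand on the right is positive definite while the remaining terms are sums of conjugations of positive definite (or positive semi-definite) matrices by matrices $\p_\s(Z^*)$ and are hence positive semi-definite. Consequently the left-hand side is positive definite, which implies $\det \sum_{\lvert \s \rvert = n} \p_\s(Z^*)^* \p_\s(Z^*) > 0$. The contrapositive is exactly the statement of the lemma.

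There is no serious obstacle: once the algebraic identity is set up, the argument is a transparent positivity calculation mirroring the proof of \Cref{lem:ZerosOutside}. The only mild subtlety is the appearance of the $-I$ term in place of the $+I$ term seen previously; this is what forces the appropriate hypothesis to be the \emph{strict} outer condition $Z_1 Z_1^* + \cdots + Z_d Z_d^* > I$ (rather than, say, just $Z \notin \overline{\BB}_\nc^d$), and explains why the result is stated using the set $\mathrm{Ext}(\overline{\BB}_\nc^d)$ rather than the set-theoretic complement of the closed row-ball.
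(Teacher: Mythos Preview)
Your proposal is correct and follows essentially the same approach as the paper: specialise \eqref{eqn:SummationFormula} at $Z^*$, isolate $\sum_{\lvert \s \rvert = n} \p_\s(Z^*)^* \p_\s(Z^*)$ by peeling off the $\s = \emptyset$ and $\lvert \s \rvert = n$ terms, and observe that the strict condition $Z_1Z_1^* + \cdots + Z_dZ_d^* > I$ makes the right-hand side a positive definite term plus positive semi-definite ones. Your observation about why the hypothesis must be the strict outer condition (rather than merely $Z \notin \overline{\BB}^d_\nc$) is exactly the point the paper is making by introducing $\mathrm{Ext}(\overline{\BB}^d_{\nc})$.
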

\begin{proof}
    Let \(Z \in \mathrm{Ext}(\overline{\BB}^d_{\nc})\). Again setting \(Z\) and \(W\) in the preamble to be \(Z^*\), when \(Z \in \mathrm{Ext}(\overline{\BB}^d_{\nc})\) we instead have that \(Z_1Z_1^* + \cdots + Z_dZ_d^* - I > 0\), so separating out the \(\lvert \s \rvert = n\) and \(\s = \emptyset\) terms as before we move from \eqref{eqn:SummationFormula} to
    \begin{multline*}
        \sum_{\lvert \s \rvert = n} \p_s(Z^*)^*\p_\s(Z^*) = \p_{\s(n)}^\#(Z^*)^*\p_{\s(n)}^\#(Z^*) + (Z_1Z_1^* + \cdots + Z_dZ_d^* - I) \\ + \sum_{1 \leq \lvert \s \rvert \leq n-1} \p_\s(Z^*)^*(Z_1Z_1^* + \cdots + Z_dZ_d^* - I)\p_\s(Z^*).
    \end{multline*}
    Now, \(Z_1Z_1^* + \cdots + Z_dZ_d^* - I\) is positive definite, and the other two terms on the right hand side here are positive semi-definite, so for \(Z \in \mathrm{Ext}(\overline{\BB}^d_{\nc})\) we have that \(\sum_{\lvert \s \rvert = n} \p_\s(Z^*)^* \p_\s(Z^*)\) is positive definite and therefore has nonzero determinant, that is, its determinantal zeros must lie outside \(\mathrm{Ext}(\overline{\BB}^d_{\nc})\).
\end{proof}

\begin{remark}
    As before, notice that when \(d = 1\), \(\sum_{\lvert \s \rvert = n} \p_\s(Z^*)^* \p_\s(Z^*)\) is simply \(\lvert\p_n(\overline{z})\rvert^2\), and that this (and equivalently \(\p_n\)) has no zeros outside the unit disk, so this quantity is our analogue of (the modulus squared of) \(\p_n\) in the free noncommutative setting.
\end{remark}

Our analogue of the unit circle, as one might imagine after \Cref{lem:ZerosInside}, is the distinguished boundary of the row-ball, the set
\[
    \partial \BB^d_\nc := \dju \left\{(Z_1, \ldots, Z_d) \in (\CC^{n \times n})^d : Z_1Z_1^* + \cdots + Z_dZ_d^* = I_n\right\}
\]
of \emph{row co-isometries}, and this turns out to be a well-behaved set to consider: on \(\partial \BB_\nc^d\), the previous two quantities discussed coincide, so that if one is non-singular on the entirety of this set, we automatically see that the other is too. The argument we provide for non-singularity broadly mimics that of \cite{Sim05}, though some additional care is needed to handle determinantal zeros of a multivariate nc polynomial compared to the zeros of a scalar univariate polynomial.

\begin{lemma}
    \label{lem:ZerosBoundary}
    Let \(\mu \in \ncm{d}\) be a non-trivial probability nc measure with associated orthonormal polynomials \((\p_\s)_{\s \in \FF_d^+}\). For any \(n \in \NN\) and \(Z \in \partial \BB^d_\nc\), we have that 
    \[
        \p_{\s(n)}^\#(Z^*)^* \p_{\s(n)}^\#(Z^*) = \sum_{\lvert \s \rvert = n} \p_\s(Z^*)^*\p_\s(Z^*)
    \]
    and this quantity is non-singular.
\end{lemma}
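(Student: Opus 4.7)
For the equality, I would specialise the summation formula \eqref{eqn:SummationFormula} by setting $Z = W = Z^*$; this turns $Z_k^* W_k$ into $Z_k Z_k^*$, and applying the same rearrangement used in the proof of \Cref{lem:ZerosOutside} yields
\begin{multline*}
\p_{\s(n)}^\#(Z^*)^* \p_{\s(n)}^\#(Z^*) = \sum_{|\s| = n} \p_\s(Z^*)^* \p_\s(Z^*) + \bigl(I - \textstyle\sum_k Z_k Z_k^*\bigr) \\ + \sum_{1 \leq |\s| \leq n - 1} \p_\s(Z^*)^* \bigl(I - \textstyle\sum_k Z_k Z_k^*\bigr) \p_\s(Z^*).
\end{multline*}
On $\partial \BB^d_\nc$ we have $I - \sum_k Z_k Z_k^* = 0$, so the two extra terms vanish identically, leaving precisely the claimed equality.

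For non-singularity I would prove that $\p_{\s(n)}^\#(Z^*)$ is invertible by induction on $n$; the equality then transfers invertibility to the sum automatically. The base case $n = 0$ is trivial since $\p_{\s(0)}^\# = \p_\emptyset = 1$. For the inductive step, suppose $v$ satisfies $\p_{\s(n)}^\#(Z^*) v = 0$; by the equality, $\p_\s(Z^*) v = 0$ for every $|\s| = n$. Writing any such $\s$ as $\s = k\s'$ with $|\s'| = n - 1$, the condition $\p_{k\s'}(Z^*) v = 0$ substituted into \eqref{eqn:recNormal} gives $Z_k^* \p_{\s'}(Z^*) v = \g_{k\s'}\p_{k\s' - 1}^\#(Z^*) v$; feeding this back into \eqref{eqn:recReverseNormal} produces the key identity
\[
\p_{k\s'}^\#(Z^*) v = d_{k\s'}\, \p_{k\s' - 1}^\#(Z^*) v
\]
at every length-$n$ word $k\s'$. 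Iterating this identity along the shortlex chain of length-$n$ words, starting from $\s(n) = d^n$ and ending when the chain reaches $1^n$ whose shortlex predecessor $\s(n-1) = d^{n-1}$ drops in length, gives
\[
\p_{\s(n)}^\#(Z^*) v = \Biggl(\prod_{|\t| = n} d_\t\Biggr) \p_{\s(n-1)}^\#(Z^*) v.
\]
Non-triviality of $\mu$ forces $|\g_\t| < 1$ for every $\t$, so this product is strictly positive; hence $\p_{\s(n-1)}^\#(Z^*) v = 0$, and the inductive hypothesis delivers $v = 0$.

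The point requiring the most care is the derivation of the identity $\p_{k\s'}^\#(Z^*) v = d_{k\s'}\p_{k\s' - 1}^\#(Z^*) v$ --- this is the bridge that converts vanishing of $\p_\s(Z^*) v$ at length $n$ into a telescoping chain among the reverse polynomials, enabling descent to length $n - 1$. The multivariate subtlety beyond the $d = 1$ case, where this step collapses to a single application because $k\s' - 1 = \s'$ trivially, lies in confirming that the shortlex sweep through all $d^n$ length-$n$ words does indeed terminate cleanly at the transition $1^n \to \s(n-1)$, so that the telescoping product covers exactly the length-$n$ defects $d_\t$.
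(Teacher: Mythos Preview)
Your argument for the equality is identical to the paper's: specialise \eqref{eqn:SummationFormula} and use $I - \sum_k Z_k Z_k^* = 0$ on $\partial\BB^d_\nc$.

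For non-singularity, however, you take a genuinely different route. The paper argues by contradiction via a radial limit: assuming $\p_{\s(n)}^\#(W^*)\xi = 0$ for some $W \in \partial\BB^d_\nc$ and $\xi \neq 0$, it applies the interior inequality $\p_{\s(n)}^\#(rW^*)^*\p_{\s(n)}^\#(rW^*) \geq (1-r)(1+r)I$ from \Cref{lem:ZerosOutside} to $rW \in \BB^d_\nc$, then uses the vanishing at $r=1$ to factor $(1-r)$ out of $\p_{\s(n)}^\#(rW^*)\xi$, forcing $\xi^*\xi \leq (1-r)\cdot(\text{bounded})\to 0$ --- a contradiction. Your approach is instead purely algebraic: you induct on $n$, and at the key step exploit that $\p_\s(Z^*)v = 0$ for \emph{every} length-$n$ word to turn the two recurrences into the telescoping identity $\p_\t^\#(Z^*)v = d_\t\,\p_{\t-1}^\#(Z^*)v$ along the entire shortlex chain of length-$n$ words, terminating at $\p_{\s(n-1)}^\#(Z^*)v$. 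Your derivation of this identity is correct (substituting $Z_k^*\p_{\s'}(Z^*)v = \g_{k\s'}\p_{k\s'-1}^\#(Z^*)v$ into \eqref{eqn:recReverseNormal} gives exactly the factor $(1-|\g_{k\s'}|^2)/d_{k\s'} = d_{k\s'}$), and the shortlex chain does indeed sweep through all $d^n$ length-$n$ words and drop to $\s(n-1)$ precisely at $1^n - 1$. The paper's analytic argument recycles \Cref{lem:ZerosOutside} and avoids any bookkeeping over words; your inductive argument is self-contained, makes no appeal to the interior and no limiting procedure, and shows more transparently how non-triviality (each $d_\t > 0$) is the mechanism preventing degeneration on the boundary.
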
  
\begin{proof}
    Let \(Z \in \partial \BB^d_\nc\); then \(I - Z_1Z_1^* + \cdots + Z_dZ_d^* = 0\), so the same manipulations as in \Cref{lem:ZerosOutside} and \Cref{lem:ZerosInside} this time yield the stated equality immediately. It remains to show that this quantity is non-singular on \(\partial \BB_\nc^d\).
    
    To see this, suppose to the contrary that
    \[
        \det \left(\p_{\s(n)}^\#(W^*)^*\p_{\s(n)}^\#(W^*)\right) = 0
    \]
    for some \(W \in \partial \BB^d_\nc\). Since for any matrix \(A\) we have \(\det AA^* = (\det A) (\overline{\det A}) = \lvert \det A \rvert^2\), this then implies that
    \[
        \det \p_{\s(n)}^\#(W^*) = 0,
    \]
    so in particular if \(W\) is \(m \times m\) then there exists some \(\xi \in \CC^{m}\setminus\{0\}\) such that \(\p_{\s(n)}^\#(W^*)\xi = 0\).

    Recall from the proof of \Cref{lem:ZerosOutside} that if \(Z \in \BB^d_\nc\) then
    \begin{multline*}
        \p_{\s(n)}^\#(Z^*)^* \p_{\s(n)}^\#(Z^*) = \sum_{\lvert \s \rvert = n} \p_\s(Z^*)^* \p_\s(Z^*) + (I - Z_1Z_1^* - \cdots - Z_dZ_d^*)  \\ + \sum_{1 \leq \lvert \s \rvert \leq n-1} \p_\s(Z^*)^*(I - Z_1Z_2^* - \cdots - Z_dZ_d^*)\p_\s(Z^*),
    \end{multline*}
    so that
    \[
        \p_{\s(n)}^\#(Z^*)^* \p_{\s(n)}^\#(Z^*) \geq I - \sum_{k=1}^d Z_kZ_k^*.
    \]
    Notice that if \(r \in (0,1)\) then \(rW \in \BB^d_\nc\), providing us with the matrix inequality
    \begin{align*}
        \p_{\s(n)}^\#(rW^*)^* \p_{\s(n)}^\#(rW^*) & \geq I - \sum_{k=1}^d (rW_k)(rW_k)^* \\
        & = \left(I - \left(\sum_{k=1}^d (rW_k)(rW_k)^*\right)^{1/2}\right)\left(I + \left(\sum_{k=1}^d (rW_k)(rW_k)^*\right)^{1/2}\right) \\
        & = \left(I - \left(r^2\sum_{k=1}^d W_kW_k^*\right)^{1/2}\right)\left(I + \left(r^2\sum_{k=1}^d W_kW_k^*\right)^{1/2}\right);
    \end{align*}
    recalling that \(W \in \partial \BB^d_\nc\), this becomes
    \[
        \p_{\s(n)}^\#(rW^*)^* \p_{\s(n)}^\#(rW^*) \geq (I - rI^{\frac12})(I + rI^{\frac12}) = (1-r)(1+r)I.
    \]
Thus, we have
    \[
        \frac{\xi^* \p_{\s(n)}^\#(rW^*)^* \p_{\s(n)}^\#(rW^*) \xi}{1-r} \geq (1+r)\xi^* \xi > \xi^* \xi > 0.
    \]
    Write the polynomial \(\p_{\s(n)}^\#\) as \(\sum_{\s \in \FF_d^+} \a_\s Z^\s\); with this notation, \(\p_{\s(n)}^\#(W^*)\xi = 0\) implies that 
    \[
        \a_\emptyset \xi = - \sum_{\s \neq \emptyset} \a_\s (W^*)^\s \xi.
    \]
    Substituting this in for the constant coefficient in \(\p_{\s(n)}^\#(rW^*)\xi\) we obtain
    \[
        \p_{\s(n)}^\#(rW^*)\xi = - \sum_{\s \neq \emptyset} \a_\s (W^*)^\s \xi + \sum_{\s \neq \emptyset} \a_\s (rW^*)^\s \xi = \sum_{\s\neq\emptyset} \a_\s (r^{\lvert \s \rvert} - 1) (W^*)^{\s} \xi,
    \]
    which in turns transforms our inequality into
    \begin{align*}
        0 & < \xi^* \xi < \frac{\left(\sum_{\s\neq\emptyset} \a_\s (r^{\lvert \s \rvert} - 1) (W^*)^{\s} \xi\right)^*\left(\sum_{\s\neq\emptyset} \a_\s (r^{\lvert \s \rvert} - 1) (W^*)^{\s} \xi\right)}{1-r} \\
        & = \frac{1}{1-r}\left((1-r)\sum_{\s\neq\emptyset} \a_\s \left(\sum_{j=0}^{\lvert \s \rvert - 1} r^j \right) (W^*)^{\s} \xi\right)^*\left((1-r)\sum_{\s\neq\emptyset} \a_\s \left(\sum_{j=0}^{\lvert \s \rvert - 1} r^j \right) (W^*)^{\s} \xi\right) \\
        & = (1-r)\left(\sum_{\s\neq\emptyset} \a_\s \left(\sum_{j=0}^{\lvert \s \rvert - 1} r^j \right) (W^*)^{\s} \xi\right)^*\left(\sum_{\s\neq\emptyset} \a_\s \left(\sum_{j=0}^{\lvert \s \rvert - 1} r^j \right) (W^*)^{\s} \xi\right) \\
        & \to 0 < \xi^*\xi
    \end{align*}
    as \(r \to 1\), a contradiction. Hence no such \(W\) can exist.
\end{proof}

The following corollary summarises our results on the determinantal zeros of these quantities associated to an nc measure.

\begin{theorem}
    \label{thm:NCZerosThm}
    Let \(\mu \in \ncm{d}\) be a non-trivial probability nc measure with associated orthonormal polynomials \((\p_\s)_{\s \in \FF_d^+}\) and let \(n \in \NN\). Then we have that
    \begin{enumerate}
        \item[\rm{(i)}] if \(Z \in \BB_\nc^d\), then \(\p_{\s(n)}^\#(Z^*)^* \p_{\s(n)}^\#(Z^*)\) is non-singular, i.e.
        \[
            \cZ\left(\p_{\s(n)}^\#(Z^*)^* \p_{\s(n)}^\#(Z^*)\right) \cap \BB_\nc^d = \emptyset;
        \]
        \item[\rm{(ii)}] if \(Z \in \mathrm{Ext}(\overline{\BB}^d_{\nc})\), then \(\sum_{\lvert \s \rvert = n} \p_\s(Z^*)^* \p_\s(Z^*) \) is non-singular, i.e. 
        \[
            \cZ\left(\sum_{\lvert \s \rvert = n} \p_\s(Z^*)^* \p_\s(Z^*)\right) \cap U = \emptyset;
        \]
        \item[\rm{(iii)}] if \(Z \in \partial \BB_{\nc}^d\), then \(\p_{\s(n)}^\#(Z^*)^* \p_{\s(n)}^\#(Z^*)\) and \(\sum_{\lvert \s \rvert = n} \p_\s(Z^*)^* \p_\s(Z^*)\) are non-singular, and furthermore
        \[
            \p_{\s(n)}^\#(Z^*)^* \p_{\s(n)}^\#(Z^*) = \sum_{\lvert \s \rvert = n} \p_\s(Z^*)^* \p_\s(Z^*).
        \]
    \end{enumerate}
\end{theorem}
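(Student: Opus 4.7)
The plan is straightforward, because \Cref{thm:NCZerosThm} is really a consolidation of the three preceding lemmas (\Cref{lem:ZerosOutside}, \Cref{lem:ZerosInside}, and \Cref{lem:ZerosBoundary}) that have already done the substantive work via the summation identity \eqref{eqn:SummationFormula}. The proof therefore amounts to appealing to each lemma in turn, handling the minor discrepancies in phrasing along the way.

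For (i), I would apply the contrapositive of \Cref{lem:ZerosOutside}: if $Z \in \BB_\nc^d$, then $Z$ does not satisfy $\det \p_{\s(n)}^\#(Z^*) = 0$, so $\det \p_{\s(n)}^\#(Z^*) \neq 0$. Since
\[
    \det\bigl(\p_{\s(n)}^\#(Z^*)^* \p_{\s(n)}^\#(Z^*)\bigr) = \lvert \det \p_{\s(n)}^\#(Z^*) \rvert^2,
\]
the matrix $\p_{\s(n)}^\#(Z^*)^* \p_{\s(n)}^\#(Z^*)$ has strictly positive determinant on $\BB_\nc^d$, establishing non-singularity and hence the emptiness of the intersection with $\cZ\bigl(\p_{\s(n)}^\#(Z^*)^* \p_{\s(n)}^\#(Z^*)\bigr)$.

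For (ii), the contrapositive of \Cref{lem:ZerosInside} gives the claim immediately, without the need for the intermediate $\lvert \det A \rvert^2$ step, since that lemma is already phrased in terms of the determinant of the product $\sum_{\lvert \s \rvert = n} \p_\s(Z^*)^* \p_\s(Z^*)$ itself. Thus if $Z \in \mathrm{Ext}(\overline{\BB}_\nc^d)$, this sum is non-singular at $Z$. For (iii), \Cref{lem:ZerosBoundary} is precisely the statement required: it gives both the equality of the two quantities on the distinguished boundary and their joint non-singularity there.

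There is no substantial obstacle in this proof: the content of the theorem has already been absorbed into the three lemmas, whose proofs rest on the algebraic identity \eqref{eqn:SummationFormula} together with positive semi-definiteness of the remainder terms corresponding to $I - \sum_k Z_k Z_k^*$ (respectively $\sum_k Z_k Z_k^* - I$, or $0$) on $\BB_\nc^d$, $\mathrm{Ext}(\overline{\BB}_\nc^d)$, and $\partial \BB_\nc^d$. The only mild care required is noticing that in (i) the stated quantity is of the form $A^* A$ whereas \Cref{lem:ZerosOutside} is phrased for $\det A$ alone — this is resolved by the factorisation of the determinant indicated above.
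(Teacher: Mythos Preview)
Your proposal is correct and follows exactly the paper's approach: the theorem is a summary statement, and the paper's proof likewise just cites \Cref{lem:ZerosOutside}, \Cref{lem:ZerosInside}, and \Cref{lem:ZerosBoundary} for parts (i), (ii), and (iii) respectively. In fact you have the lemma-to-part pairing right (the paper's own one-line proof swaps the references for (i) and (ii)), and your explicit $\lvert \det A \rvert^2$ bridge for part (i) is a small but welcome clarification.
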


\begin{proof}
    The first claim is \Cref{lem:ZerosInside}, the second claim is \Cref{lem:ZerosOutside}, and both parts of the final claim are \Cref{lem:ZerosBoundary}.
\end{proof}

\begin{remark}
    We note that items (i) and (ii), as well as the first part of (iii) are highly analogous to the Zeros Theorem \cite[Theorem 1.7.1]{Sim05} and the observations discussed in \Cref{rem:CommZeros}. The analogy to the second part of (iii), that when \(z \in \TT\) we further have that \(\lvert \P_n(z) \rvert = \lvert \P_n^*(z) \rvert\), follows immediately from the formula for the reverse polynomial of a univariate polynomial, though we remark again that this formula is a relationship absent from our setting.
\end{remark}

\bibliographystyle{plain}
\bibliography{bibliography}

\end{document}